\newcommand{\R}{\mathbb{R}}
\numberwithin{equation}{section}
\newtheorem{theorem}{Theorem}[section]
\newtheorem{theorem*}{Theorem}
\newtheorem{lemma}[theorem]{Lemma}
\newtheorem{corollary}[theorem]{Corollary}
\newtheorem{remark}[theorem]{Remark}
\newtheorem{proposition}[theorem]{Proposition}
\newtheorem{definition}[theorem]{Definition}
\newtheorem{example}[theorem]{Example}
\newtheorem{condition}[theorem]{Condition}
\newcommand\red{\color{red}}
\newcommand\blue{\color{blue}}
\newcommand\argmin{{\rm argmin}}
\newcommand\vol{{\rm vol}}
\newcommand\trace{{\rm tr}}
\newcommand\diam{{\rm diam}}
\newcommand\spt{{\rm spt}}
\newcommand\dist{{\rm dist}}
\begin{document}
\title{Wasserstein Barycenters over Riemannian manifolds \footnote{Y.-H.K. is supported in part by 
Natural Sciences and Engineering
Research Council of Canada (NSERC) Discovery Grants 371642-09 and 2014-05448 as well as Alfred P. Sloan research fellowship 2012-2016.  B.P. is pleased to acknowledge the support of a University of Alberta start-up grant and National Sciences and Engineering Research Council of Canada Discovery Grant number 412779-2012. Part of this research was done while Y.-H.K. was visiting Korea Advanced Institute of Science and Technology (KAIST) and while both authors were visiting the Mathematical Sciences Research Institute (MSRI), Berkeley, CA, and  the Fields Institute, Toronto, ON.
}}

\author{Young-Heon Kim\footnote{Department of Mathematics, University of British Columbia, Vancouver BC Canada V6T 1Z2 \ \ yhkim@math.ubc.ca} and Brendan Pass\footnote{Department of Mathematical and Statistical Sciences, 632 CAB, University of Alberta, Edmonton, Alberta, Canada, T6G 2G1 \ \ pass@ualberta.ca}}
\maketitle
\begin{abstract}
 We study barycenters in the space of probability measures on a Riemannian manifold, equipped with the Wasserstein metric.
Under reasonable assumptions, we establish absolute continuity of the barycenter of general measures $\Omega \in P(P(M))$ on Wasserstein space,  extending on one hand, results in the Euclidean case (for barycenters between finitely many measures)  of Agueh and Carlier \cite{ac} to the Riemannian setting, and  on the other hand, results in the Riemannian case of Cordero-Erausquin, McCann, Schmuckenschl\"ager \cite{c-ems} for barycenters between two measures to the multi-marginal setting.  Our work also extends these results to the case where $\Omega$ is not finitely supported.  As applications, we prove versions of Jensen's  inequality on Wasserstein space and a generalized Brunn-Minkowski inequality for %a family of (possibly infinitely many) measurable sets 
a random measurable set 
on a Riemannian manifold.

\end{abstract}

\tableofcontents
\section{Introduction}\label{sec: intro}

This paper is devoted to the study of barycenters in Wasserstein space over a Riemannian manfold $M$.  

Given a Borel probability measure $\Omega$ on a metric space $(X,d)$, a barycenter of $\Omega$ is defined as a minimizer of $y \mapsto \int_{X}d^2(x,y)d\Omega(x)$; this definition is chosen in part so that it coincides with the mean, or center of mass, $\int_{\mathbb{R}^n}xd\Omega(x)$ on the Euclidean space $X = \mathbb{R}^n$.  Barycenters have been studied extensively by geometers, and have interesting connections to the underlying geometry of $X$; for example, their uniqueness is intimately related to sectional curvature.

The case where the metric space $(X,d)=(P(M),W_2)$ is the space of Borel probability measures on a compact Riemannian manifold $M$, equipped with the $W_2$ distance, is of particular interest,  as it gives a natural but nonlinear way to interpolate between a distribution of measures.   The Wasserstein, or optimal transport, distance $W_2 (\mu, \nu)$  between $\mu, \nu \in P(M)$ is given by

\begin{equation}\label{wass distance}
W_2^2(\mu,\nu) = \inf_{\gamma}\int_{M \times M}d^2(x,y)d\gamma(x,y)
\end{equation}
where $d$ is the Riemannian distance and the infimum is taken over all probability measures $\gamma$ on $M \times M$ whose marginals are $\mu$ and $\nu$.  It is well known that $W_2$ defines a metric on $P(M)$  (see, e.g., \cite{ags, V}) and therefore it makes sense to talk about barycenters. 

\begin{definition}[\bf Wasserstein barycenter measure]\label{def: W bary}
Let $\Omega$ be a probability measure on $P(M)$. A Wasserstein barycenter 
of $\Omega$
 is a minimizer among probability measures $\nu \in P(M)$ of 
\begin{align*}
 \nu \mapsto \int_{P(M)} W_2^2 (\mu, \nu) d\Omega(\mu).
\end{align*}
\end{definition}

Existence and uniqueness (under mild conditions) of Wasserstein barycenters are not difficult to establish (see Section~\ref{sec: exist and unique}).  When the measure $\Omega =(1-t)\delta_{\mu_0}+t\delta_{\mu_1}$ is supported on two points in $P(M)$, the barycenter measure $\mu_t$ on $M$ is equivalent to McCann's celebrated displacement interpolant \cite{m}. A key property of dispacement interpolants is that $\mu_t$ is  absolutely continuous with respect to volume if either$\mu_0$ or $\mu_1$ is; this fact plays a foundational role in the analysis of convexity type properties of various functionals on the space of absolutely continuous probability measures  $P_{ac}(M)$.  The notion of displacement convexity, or convexity of functionals along this interpolation, has been very fruitful; its  wealth of applications includes insightful new proofs of geometric and functional inequalities on $\mathbb{R}^n$, and remarkable generalizations of these inequalities to the Riemannian setting; see, e.g., the books \cite{ags, V, V2}.  In addition, displacement convexity is a fundamental notion in  the synthetic treatment of Ricci curvature  developed by Sturm \cite{sturm06}\cite{sturm06a} and Lott-Villani \cite{lottvillani}.     An important example of a displacement convex functional is  Boltzmann's $H$-functional (or the Shannon entropy functional) $\rho \mapsto \int_M \rho(x) \log \rho(x) dvol(x)$ on manifolds with nonnegative Ricci curvature. 

In the multi-measure setting, with economic applications in mind, Carlier-Ekeland \cite{CE} introduced an interpolation between several probability measures, which includes as a special case  Wasserstein barycenters of finitely supported measures $\Omega =\sum_{i=1}^m\lambda_i\delta_{\mu_i}$; in fact,  their setting is more general, as the distance squared in \eqref{wass distance} is replaced with a more general cost function.
  Agueh-Carlier \cite{ac} provided a more extensive treatment of Wasserstein barycenters of finitely supported measures  $\Omega \in P(\R^n)$  when the underlying space $M$ is Euclidean, establishing that the barycenter is absolutely continuous with an $L^{\infty}$ density if one of the marginals is, as well as a variety of convexity type inequalities, which one can interpret as Jensen's type inequalities for discrete measures and displacement convex functionals on $P(\R^n)$.   Absolute continuity has also been established for more general interpolations over Euclidean space, as well as barycenters on Hadamard manifolds  (simply connected Riemannian manifolds with nonpositive curvature) \cite{P9}.
  Some of these results have been extended  by one of us \cite{P5} to the case where the support of $\Omega \in P(\R^n)$ is parameterised by a $1$-dimensional continuum. 
  Let us note that in addition to economics \cite{CE}, Wasserstein barycenters in the multi-measure setting 
   have appeared in the literature with  applications in 
   image processing \cite{bdpr} and statistics \cite{BK}.

 In this paper, we consider the Wasserstein barycenters of general measures $\Omega \in P(P(M))$.  In particular, we allow the support of $\Omega$ to have cardinality greater than $2$ (and possibly be infinite) and the underlying domain $M$ to be a  general compact Riemannian manifold,  without any curvature or topological restrictions. At present, little is known about Wasserstein barycenters in this generality.

Our first main contribution  is to establish {\bf  absolute continuity with respect to volume measure of the  Wasserstein barycenter}, under reasonable conditions on the marginals: see
{\bf Theorems~\ref{thm: ac-for finite} and \ref{thm: ac for general}}.
In previous work  \cite{ac}, \cite{P5}, and \cite{P9},  regularity results on barycenters are obtained by exploiting either the special geometry of Euclidean space or the uniform convexity of the distance squared function (in the non-positively curved setting). 
These tools are not available in the general Riemannian setting, and our argument is based instead on approximations.  
Indeed, we first consider the case of a finitely supported $\Omega=\sum_{i=1}^m\lambda_i\delta_{\mu_i}$ on $P(M)$, and adapt an argument of Figalli-Juillet \cite{FJ} (who studied the two measure case on  the Heisenberg group and Alexandrov spaces); this amounts to approximating all but one of the measures $\mu_i$ by finite sum of Dirac measures, obtaining uniform estimates for the approximating barycenters and passing to the limit.    Once  absolute continuity of the barycenter $\bar \mu =\bar f dvol$  is known,  estimates  on the Jacobian determinants of the optimal maps from $\bar \mu$ to each $\mu_i \in spt(\Omega)$ (expressed in Theorem \ref{thm: detcontrol}) imply that the $L^\infty$ norm of the barycenter density $\bar f$ is controlled by the densities of the measures $\mu_i \in spt(\Omega)$ (see Theorem \ref{thm: upper bound of density} for a precise statement).  With this uniform control in hand, we are able  to treat the general case by approximating a general $\Omega$ on $P(M)$ by finitely supported measures:  see Theorem~\ref{thm: ac for general}.

Our estimates in Theorem \ref{thm: detcontrol} are a result of what we call \textit{first and second order balance} conditions (Theorem \ref{thm: bccondition}), reflecting the fact that the barycenter is a stationary point of the functional $\int_{P(M)} W_2^2 (\mu, \nu) d\Omega(\mu)$, and are expressed in terms of  what we call \emph{generalized}, or \emph{barycentric, distortion coefficients}.  These coefficients, roughly speaking, capture the influence of the barycenter operation on the volume of small sets in $M$, in the same way that the volume distortion coefficients, introduced by Cordero-Erausquin, McCann and Schmuckenschlager\cite{c-ems} capture the effect on the volume of a small set in $M$ by interpolation along geodesics with a common endpoint (in fact, the volume distortion coefficients employed there are precisely our generalized distortion coefficients in the case where $\Omega$ is supported on two measures).

 Using the above results, we are then able to establish certain {\bf Wasserstein Jensen's type inequalities} (see, in particular, {\bf Theorems~\ref{thm: k-Jensen} and \ref{thm: distortedconvexity}}) for a wide variety of displacement convex functionals on $P(M)$.
    In fact, we establish two distinct results of this type; one involves $k$-displacement convex functionals, and can be interpreted as a generalization of \cite[Theorem 17.15]{V2}.  The other expresses a distorted sort of convexity and involves our \emph{generalized, or barycentric, distortion coefficients}; this is closer in spirit to the line of research pioneered by Cordero-Erausquin, McCann and Schmuckenschlager\cite{c-ems}, and can be interpreted as a  generalization of \cite[Theorem 17.37]{V2}.   We note that geometric versions of Jensen's  inequality (that is, versions formulated in terms of barycenters on metric spaces rather than linear averages) are known for measures on finite dimensional smooth manifolds  \cite[Proposition 2]{EmMo} and on more general spaces with appropriate sectional curvature bounds (see \cite{sturm} and \cite{kuwae}); these sectional curvature bounds are not satisfied by Wasserstein space $(P(M), W_2)$ \cite{ags}. Before the present paper, a version of Jensen's  inequality  on $P(M)$, due to Agueh-Carlier\cite{ac},  was known only when the underlying space $M \subseteq \mathbb{R}^n$ is Euclidean and the measure $\Omega$ on $P(M)$ is finitely supported.  Of course, the type of distorted convexity in  \cite[Theorem 17.37]{V2} is peculiar to Wasserstein space, and even the statement of the corresponding Jensen's inequality (our \ref{thm: distortedconvexity}) requires a generalization of the classical volume distortion coefficients in \cite{c-ems}, which is formulated here for the first time.

Finally, as an application of the machinery developed in this paper, we offer {\bf a random version of the Brunn-Minkowski inequality on a Riemannian manifold: see Theorem~\ref{thm: random BM}.}    
 The classical Brunn-Minkowski inequality involves the interpolation between two sets in Euclidean space; an extension to Riemannian manifolds, with Ricci curvature playing a key role, can be derived directly from the results in \cite{c-ems}.  Our result extends this to  interpolations between random sets on $M$.  For a finite number of sets, we should note that in Euclidean space this result is easily recoverable using the classical Brunn-Minkowksi and induction.  For an infinite collection of sets, the Euclidean version is a pre-existing but nontrivial result, known as Vitale's random Brunn-Minkowski inequality \cite{vitale};  our work provides a mass transport based proof of it. On the other hand, in the Riemannian case, our result seems to be completely novel, as soon as we interpolate between three or more sets.

{\bf Organization of the paper:}
In the following section, we introduce the notation and terminology we will use throughout the paper and recall a few fundamental results from the literature which we will need.  In Section~\ref{sec: exist and unique}, we prove a  general existence and uniqueness results for the Wasserstein barycenter. 
 In Section~\ref{S:balance}, we establish some properties of the Wasserstein barycenter, including two balance conditions on Kantorovich potentials which will be crucial in subsequent sections. 
Section~\ref{S:regularity-finitely many}
 is devoted to the  proof of the absolute continuity of the Wasserstein barycenter when the measure $\Omega$ has finite support; this result is then exploited to prove absolute continuity of the barycenter of  more general $\Omega$ in Section~\ref{S:regularity-infinitely many}. 
This, in turn,  is used in Section~\ref{S:Jensen}, where we prove our  Wasserstein Jensen's  inequalities.  Finally, these results are exploited to establish a random Brunn-Minkowski inequality on curved spaces in Section~\ref{S:Brunn-Minkowski} . 

\section{Notation, definitions,  and preliminary results}\label{sec: preliminary}
In this section, we introduce some notation and terminology which we will use in the rest of the paper, and develop some preliminary results.  
\subsection{Notation and assumptions}\label{sec: notation}
Throughout the rest of the paper, we use the following notation and assumptions:
\begin{itemize}
 \item $M$ is a connected, compact $n$-dimensional Riemannian manifold. (The compactness assumption is made  primarily to keep the presentation relatively simple.  Most of the results in this paper can be established on non compact manifolds under suitable additional hypotheses, such as decay conditions on the measures, etc.)
   \item $d(x,y)$ is the Riemannian distance between two points $x,y$ in $M$.
 
  \item $B_r(x)$ is the geodesic ball of radius $r$ in $M$, centred at $x$.
  \item We will sometimes use the notation
$$c(x,y)=\frac{1}{2} d^2(x,y),$$

where $c$ stands for cost function. 
A significant property of the function $c$ is the following relation:
  $$  -D_ x c(x, y) = \exp_{x}^{-1} (y),$$
where $D_x$ denotes the gradient of $c$ with respect to the $x$-variable; although the notation $D_xc$ is often used to denote the differential of $c$ (a covector) rather than its  gradient (a vector), we will often identify vectors and covectors using the Riemannian metric.

 \item $P(M)$ is the space of probability measures on $M$ equipped with the weak-* topology, or, equivalently, metrized with the Wasserstein distance \eqref{wass distance}.
  
 \item  $\Omega$ is a probability measure on $P(M)$. 
 
\end{itemize}

 \subsection{Borel measurability of the set $P_{ac}(M)$}

 Equipped with the distance $W_2$, the space $P(M)$ is a separable metric space.  We consider measurable sets with respect to the Borel $\sigma$-algebra.

In this subsection we show that the set $P_{ac}(M)$ of absolutely continuous probability measures on $M$, with respect to the $n$-dimensional Hausdorff measure, or equivalently the Riemannian volume, is Borel measurable.  We expect that this is already known to experts, but we include it for completeness.
 
In  Section~\ref{sec: exist and unique}, when we show uniqueness of the Wasserstein barycenter of a given probability measure $\Omega$ on $P(M)$, we will need to assume  $\Omega (P_{ac}(M))>0$.

\begin{proposition}[\bf Measurability of $P_{ac}(M)$]\label{prop: Borel P ac}
The set $P_{ac}(M) \subset P(M)$, of absolutely continuous probability measures is Borel measurable with respect to the metric topology given by the Wasserstein distance $W_2$, or equivalently with respect to the weak-* topology (the two topologies are equivalent \cite{V2}).
\end{proposition}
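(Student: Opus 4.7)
The plan is to exhibit $P_{ac}(M)$ as a $G_\delta$ subset of $(P(M), W_2)$. For each $\delta > 0$, define the auxiliary function
\[
g_\delta(\mu) := \sup\bigl\{\mu(U) : U \subseteq M \text{ open}, \ \vol(U) < \delta\bigr\},
\]
taking values in $[0,1]$. The strategy reduces to verifying two facts: (i) $\mu \in P_{ac}(M)$ if and only if $\inf_{\delta > 0} g_\delta(\mu) = 0$; and (ii) each $g_\delta$ is lower semicontinuous on $P(M)$ with respect to weak-$*$ convergence. Granted these, since $g_\delta$ is monotone increasing in $\delta$,
\[
P_{ac}(M) \;=\; \bigcap_{k=1}^\infty \bigcup_{m=1}^\infty \bigl\{\mu \in P(M) : g_{1/m}(\mu) < 1/k\bigr\}
\]
exhibits $P_{ac}(M)$ as a countable intersection of countable unions of open sets (each $\{g_{1/m} < 1/k\}$ being open by (ii)), hence Borel.

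For (i), I would first recall the standard characterization of absolute continuity for finite measures: $\mu \ll \vol$ if and only if for every $\epsilon > 0$ there exists $\delta > 0$ such that $\mu(E) < \epsilon$ whenever $E$ is Borel with $\vol(E) < \delta$. This is precisely $\inf_\delta \tilde g_\delta(\mu) = 0$, where $\tilde g_\delta$ is defined like $g_\delta$ but with the supremum taken over all Borel sets of volume less than $\delta$. It remains to observe that $g_\delta = \tilde g_\delta$: the inequality $g_\delta \leq \tilde g_\delta$ is immediate, and for the reverse, outer regularity of the Riemannian volume (a Radon measure on the compact metric space $M$) lets us, given any Borel $E$ with $\vol(E) < \delta$, choose an open $U \supseteq E$ still satisfying $\vol(U) < \delta$, whence $\mu(E) \leq \mu(U) \leq g_\delta(\mu)$ by monotonicity.

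For (ii), the Portmanteau theorem gives that for each fixed open set $U \subseteq M$, the evaluation $\mu \mapsto \mu(U)$ is lower semicontinuous on $(P(M), W_2)$. The supremum of an arbitrary family of lower semicontinuous functions is again lower semicontinuous, since the superlevel set
\[
\{\mu : g_\delta(\mu) > \alpha\} \;=\; \bigcup_{U} \{\mu : \mu(U) > \alpha\},
\]
(the union ranging over open $U$ with $\vol(U) < \delta$) is a union of open sets. Hence $g_\delta$ is lower semicontinuous, as required.

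The main obstacle, modest but worth pinning down carefully, is the regularity argument in (i): one must ensure that restricting to open sets in the definition of $g_\delta$ does not weaken its ability to detect concentration on small-volume Borel sets. Once this is handled via outer regularity of $\vol$ on a compact manifold, the remainder of the proof is a clean assembly of the $\epsilon$--$\delta$ definition of absolute continuity, Portmanteau's theorem, and the fact that countable intersections of open sets are Borel.
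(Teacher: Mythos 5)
Your overall route---expressing $P_{ac}(M)$ as a countable intersection of countable unions, built from the $\epsilon$--$\delta$ characterization of absolute continuity and an approximating family of open sets---is sound, and in spirit it matches the paper's proof: the paper approximates Borel sets by finite unions of metric balls and works directly with the resulting closed sets $\mathcal{B}_{\epsilon,\delta}$, while you use arbitrary open sets together with outer regularity of $\vol$ and package the test into a single function $g_\delta$. Your outer-regularity argument is a clean substitute for the paper's device of inflating $\delta$ to $2\delta$.

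There is, however, a genuine error in the final assembly. Lower semicontinuity of $g_\delta$ means that its \emph{superlevel} sets $\{g_\delta > \alpha\}$ are open, equivalently that its sublevel sets $\{g_\delta \leq \alpha\}$ are closed; it does \emph{not} imply that the strict sublevel sets $\{g_\delta < \alpha\}$ are open. So the claim that ``each $\{g_{1/m} < 1/k\}$ is open by (ii)'' is false, and your argument does not exhibit $P_{ac}(M)$ as a $G_\delta$. Fortunately the conclusion of the Proposition---Borel measurability---survives with a one-line fix: using the monotonicity of $g_\delta$ in $\delta$, one has
\[
P_{ac}(M) \;=\; \bigcap_{k=1}^\infty \bigcup_{m=1}^\infty \bigl\{\mu \in P(M) : g_{1/m}(\mu) \leq 1/k\bigr\},
\]
and each set $\{g_{1/m} \leq 1/k\}$ is \emph{closed} by (ii). This exhibits $P_{ac}(M)$ as an $F_{\sigma\delta}$ set, which is exactly the Borel complexity the paper obtains.
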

\begin{proof}
Note that  absolute continuity of a measure $\mu$  (with respect to $\vol$) is equivalent to the following property: 
 for every  $\epsilon >0$, there is  $\delta>0$ such that $\mu (A) \le \epsilon$ for all Borel sets $A$ with $\vol(A) \le \delta$. 
 This means 
\begin{align}\label{eq: P ac cap cup}
 P_{ac}(M) =  \cap_{k\in \mathbf{N}}\cup_{l \in \mathbf{N}}\mathcal{E}_{2^{-k},2^{-l}},
\end{align}
where the sets $\mathcal{E}_{\epsilon, \delta}$ of probability measures are defined as 
\begin{align*}
 \mathcal{E}_{\epsilon, \delta}& = \{ \mu \in P(M) \ |  \ \mu (A) \le \epsilon \\ & \qquad  \hbox{ for all Borel set  $A$ with  $\vol (A)  \le \delta$}\}.
\end{align*}

To show $P_{ac}(M)$ is a Borel set, we will express it  as a countable intersection of  countable unions of Borel sets (essentially replacing the set $\mathcal{E}_{2^{-k},2^{-l}}$ in  \eqref{eq: P ac cap cup} with a closed set).  
For this, 
first define 
\begin{align*}
 \mathcal{F} &= \{ F \subset M \ | \ F = \cup_{i=1}^m B_{r_i} (x_i) \\ & \qquad 
 \hbox{ for some finite sets $\{x_i\}_{i=1}^m \subset M$ and $\{r_i\}_{i=1}^m \in \R_+$}\}
\end{align*}
and consider the subset $\mathcal{B}_{\epsilon, \delta} \subset P(M)$ defined as 
\begin{align*}
 \mathcal{B}_{\epsilon, \delta} = \{ \mu \in P(M) \ | \  \mu (F) \le  \epsilon, \quad \forall F \in \mathcal{F} \hbox{ with } \vol(F) \le \delta \}.
\end{align*}
We claim that the set $\mathcal{B}_{\epsilon, \delta}$ is a closed subset of $P(M)$, with respect to the weak-* topology.  To see this, pick any sequence $\mu_i\in \mathcal{B}_{\epsilon,  \delta}$,  weakly-* convergent 
to $\mu_\infty$. Pick a  set $F \in \mathcal{F}$, with $\vol (F) \le \delta$ and $F = \cup_{i=1}^m B_{r_i} (x_i)$. For each $k \in \mathbf{N}$, let $F_k = \cup_{i=1}^m B_{(1-2^{-k})r_i} (x_i)$ and consider a continuous function $f_k: M \to \R$, with support 
  $\spt f_k  \subset F$  and $0\le f_k\le 1$ and $f_k= 1$ on $F_k$. Then, due to the weak-* convergence $\epsilon \ge \lim_{i \to \infty } \int_M f_k d\mu_i = \int_M f_k d\mu_\infty$. Moreover, $\mu_\infty (F) = \lim_{k \to \infty} \int_M f_k d\mu_\infty$ since $F = \cup_{k} F_k$. This shows that $\mu_\infty (F) \le \epsilon$, as desired.

  Now, clearly $\mathcal{E}_{\epsilon, \delta} \subset \mathcal{B}_{\epsilon, \delta} $. 
  We show that $\mathcal{B}_{\epsilon, 2\delta} \subset \mathcal{E}_{\epsilon, \delta}$: Let $\mu \in \mathcal{B}_{\epsilon, 2\delta}$. Let $A$ be an arbitrary Borel set with $\vol(A)\le \delta$. Pick an arbitrary small number $t >0$.  One can find an open set $U_{A, \delta, t}$, 
consisting of finite metric balls $U_{A, \delta, t}= \cup_{i=1}^m B_{r_i} (x_i)$, with $\vol(U_{A, \delta, t}) \le 2\delta$ and $\mu (U_{A, \delta, t}) \ge (1-t) \mu (A)$.    Then, from the definition of $\mathcal{B}_{\epsilon, 2\delta}$,   $\mu (U_{A, \delta, t})\le  \epsilon$, therefore, $\mu(A) \le \frac{1}{1-t} \epsilon$. Since $t>0$ was arbitrary, this means $\mu(A) \le \epsilon$. 
This shows $\mathcal{B}_{\epsilon, 2\delta} \subset \mathcal{E}_{\epsilon, \delta}$.
   
   The above paragraph implies
\begin{align*}
 P_{ac}(M) &=  \cap_{k\in \mathbf{N}}\cup_{l \in \mathbf{N}}\mathcal{E}_{2^{-k},2^{-l}}\\
 &= \cap_{k\in \mathbf{N}}\cup_{l \in \mathbf{N}}\mathcal{B}_{2^{-k},2^{-l+1}}
\end{align*}
   which completes the proof, since the latter expression is a countable union of intersections of closed (thus Borel) sets. 
\end{proof}

\begin{remark}
 Inspection of the  proof above shows that $(M, \vol)$ can be replaced with a compact separable metric space $(X, \nu)$  equipped with a reference Borel measure $\nu$. 
\end{remark}

\subsection{Optimal transport on Riemannian manifolds}

Next, we briefly recall some key results in optimal transport on Riemannian manifolds which we will use throughout the paper.  We begin with a fundamental result of McCann \cite{m3}, orignally established by Brenier \cite{bren} when $M=\mathbb{R}^n$ is Euclidean:
\begin{theorem}[\bf Optimal transport on $M$; see Brenier \cite{bren}, McCann  \cite{m3}]\label{thm: McCann}
Assume $\mu$ is absolutely continuous with respect to volume measure.  Then the infimum in \eqref{wass distance} is attained by a unique measure $\gamma$.  Furthermore, $\gamma = (I \times T)_\#\mu$ is concentrated on the graph of a  measurable mapping $T$ over the first marginal, and $T$ takes the form

$$
T(x) = \exp_x(Du(x))
$$
where $u:M \rightarrow \mathbb{R}$ is a $c$-convex funtion; that is
\begin{align}\label{eq: c-convex}
 u(x) = \sup_{y \in M}- c(x,y)-u^c(y)
\end{align}
for some function $u^c:M \rightarrow \mathbb{R}$, where $c(x,y) = \frac{1}{2} d^2 (x, y)$. 
\end{theorem}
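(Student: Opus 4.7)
The plan is to follow the classical Brenier--McCann strategy built on Kantorovich duality. Existence of at least one minimizer $\gamma$ in \eqref{wass distance} is routine: the set of couplings between $\mu$ and $\nu$ is weak-* compact by Prokhorov's theorem (using compactness of $M$), and $\gamma \mapsto \int c\, d\gamma$ is continuous since $c$ is continuous and bounded. Kantorovich duality then produces a pair of potentials $(u, u^c)$ satisfying $u(x) + u^c(y) \ge -c(x,y)$ on $M \times M$ with equality on $\spt(\gamma)$, and upon replacing $u$ by its double $c$-transform one may assume $u$ is $c$-convex as in \eqref{eq: c-convex}.

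The next step is to exploit regularity of $u$. Since $c(\cdot, y) = \tfrac12 d^2(\cdot, y)$ is semiconcave on the compact manifold $M$ with a modulus independent of $y$, and $u$ is a supremum of functions of the form $-c(\cdot, y) - u^c(y)$, it is semiconvex, hence locally Lipschitz and differentiable Riemannian-volume-almost everywhere; absolute continuity of $\mu$ then ensures that $u$ is differentiable $\mu$-a.e. At any differentiability point $x_0$ of $u$ with $(x_0, y_0) \in \spt(\gamma)$, the function $x \mapsto u(x) + c(x, y_0) + u^c(y_0)$ is nonnegative and vanishes at $x_0$, so the first-order condition yields $Du(x_0) = -D_x c(x_0, y_0) = \exp_{x_0}^{-1}(y_0)$ by the identity recalled in the notation subsection. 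This gives $y_0 = \exp_{x_0}(Du(x_0)) =: T(x_0)$, so $\gamma$ is concentrated on the graph of the Borel-measurable map $T$ over its first marginal, hence $\gamma = (I \times T)_\# \mu$. Uniqueness of $\gamma$ then follows: any second optimal coupling $\gamma'$ would, by convex combination with $\gamma$, yield another optimal plan concentrated on the union of two graphs, forcing the two graphs to coincide $\mu$-a.e.

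The main obstacle is the non-smoothness of $c$ at the cut locus: when $y_0$ lies in the cut locus of $x_0$, the vector $\exp_{x_0}^{-1}(y_0)$ is multi-valued and $c(\cdot, y_0)$ fails to be differentiable at $x_0$. I would handle this by observing that semiconcavity of $c(\cdot, y_0)$ persists globally, so its Fr\'echet superdifferential at $x_0$ is a nonempty compact convex set which, in the smooth interior, equals $\{-\exp_{x_0}^{-1}(y_0)\}$ and, at cut points, equals the convex hull of the (finitely many) preimage vectors. The minimum condition at $x_0$ forces $-Du(x_0)$ to lie in this superdifferential, and differentiability of $u$ at $x_0$ collapses it to a singleton; this excludes cut pairs for $\mu$-a.e.\ $x_0$, so $T$ is unambiguously defined $\mu$-a.e., with measurability inherited from that of $Du$.
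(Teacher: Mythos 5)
The paper does not prove Theorem~\ref{thm: McCann}; it is stated as a known result, cited from Brenier and McCann, so there is no ``paper proof'' to compare against. Your sketch is a faithful rendition of the standard McCann argument: Kantorovich duality produces a $c$-convex potential, $c$-concavity/semiconvexity gives a.e.\ differentiability, the first-order condition at differentiability points pins down the transport map $T(x) = \exp_x(Du(x))$, and uniqueness follows by taking a convex combination of two optimal plans. The overall structure and all the key ingredients are correct.

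Two small imprecisions appear in the cut-locus step, neither of which breaks the argument but both of which would need fixing for a careful write-up. First, the superdifferential of $\frac12 d^2(\cdot,y_0)$ at a cut point is the convex hull of all initial velocities of \emph{minimizing} geodesics from $x_0$ to $y_0$, and there may be a continuum of these (antipodal points on a round sphere), so ``finitely many preimage vectors'' is not correct in general; fortunately you never actually use finiteness. Second, the sentence ``the minimum condition forces $-Du(x_0)$ to lie in this superdifferential, and differentiability of $u$ at $x_0$ collapses it to a singleton'' has the logic slightly out of order: the minimum of $x\mapsto u(x)+c(x,y_0)$ at $x_0$, together with differentiability of $u$ there, places $-Du(x_0)$ in the Fr\'echet \emph{sub}differential of the semiconcave function $c(\cdot,y_0)$ at $x_0$; since a semiconcave function has a nonempty subdifferential only at points where it is differentiable, $c(\cdot,y_0)$ must be differentiable at $x_0$, which for $c=\frac12 d^2$ is precisely the statement $y_0\notin\mathrm{cut}(x_0)$. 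It is not the differentiability of $u$ that ``collapses'' the superdifferential of $c$ directly; rather it forces nonemptiness of the subdifferential of $c$, and that is what forbids the cut locus. With these corrections the argument is complete.
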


It is well known that the $c$-convex function $u$  is  semi-convex and therefore twice differentiable almost everywhere.  At each point where this differentiability holds, the mapping $T$ is differentiable.  We now recall a few classical identities, easily derived from the Brenier-McCann theorem (Theorem~\ref{thm: McCann}), or, for more general cost functions, from references such as \cite{GM} \cite{Caf} and \cite{mtw}.

Wherever $u$ is differentable, we have the first order condition:
\begin{align*}
- D_x c (x, T(x)) = D_x u (x).
\end{align*}
Differentiating this identity, we obtain
\begin{equation}\label{eq: c-equation}
D^2u(x) + D^2_{xx}c(x,T(x)) = -D^2_{xz}\Big|_{z=T(x)}c(x,z)\cdot DT(x).
\end{equation}
It will also be  important later to recall the second-order inequality  due to \eqref{eq: c-convex}:
\begin{align}\label{eq: D2 u monotone}
 D^2u(x) + D^2_{xx}c(x,T(x)) \ge 0.
\end{align}
Taking determinants of \eqref{eq: c-equation} yields
\begin{align}\label{eq: det}
&| \det[ DT(x)]| 
 \\\nonumber &=| \det [D^2_{xz} c (x, T(x)]^{-1} \det[D^2 u (x) + D^2_{xx} c (x, T(x)]|.
 \end{align}
If both $\mu$ and $\nu$ are absolutely continuous with respect to volume, with densities $f$ and $g$, respectively, we also have the change of variables formula almost everywhere:
\begin{align}\label{eq: Jacobian}
&g(T(x)) |\det [DT(x)]| = f(x),
\end{align}
which, together with \eqref{eq: det}, implies
\begin{align}\label{eq: c-MA}
& g  (T(x))|\det [D^2_{xy} c (x, T(x)]^{-1}  \det(D^2u(x) + D^2_{xx}c(x,T(x)) )|\\\nonumber
 &= f(x).
\end{align}
In the present paper we will also make use of a multi-marginal version of the Brenier-McCann theorem (Theorem~\ref{thm: McCann}), which generalizes from Euclidean space  a result of Gangbo and Swiech \cite{GS} and is also related to the works of Carlier-Ekeland \cite{CE} and Agueh-Carlier \cite{ac}.  Given probability measures $\mu_1,...,\mu_m$ on $M$, the multi-marginal optimal transport problem is to minimize

\begin{equation}\label{K}
\int_{ \Pi_{i=1}^m M}c(x_1,...,x_m)d\gamma
\end{equation}
over all  probability measures $\gamma$ on  the $m$-tuple product $\Pi_{i=1}^m M^m$ whose marginals are  the $\mu_i$'s. There has recently been substantial interest and progress in understanding this problem in a variety of different settings; see \cite{Pass14} and the references therein.
In this paper,  we will take the cost function $c: \Pi_{i=1}^m M \rightarrow \mathbb{R}$ to be 
\begin{equation}\label{bccost}
c(x_1,...,x_m) = \min_{z \in M}\sum_{i=1}^m\lambda_id^2(x_i,z)
\end{equation}
 where $\lambda_1, ..., \lambda_m > 0$, with $\sum_{i=1}^m \lambda_i =1$, represent weights on the components $d^2(x_i,z)$ making up the cost function; we will sometimes denote $\vec \lambda = (\lambda_1, ..., \lambda_m) $.
In Euclidean space, this coincides with the cost studied by Gangbo and Swiech \cite{GS}, who proved   assertion 1 in Theorem~\ref{thm: Kim Pass} below in that setting, extending earlier partial results of Olkin and Rachev \cite{OR}, Knott and Smith \cite{KS} and Ruschendorf and Uckelmann \cite{RU}.  

We  have the following theorem:
\begin{theorem}[{\bf Multi-marginal optimal transport on $M$; see \cite[Sections 4 and 5]{KP}}]\label{thm: Kim Pass}
Assume $\mu_1$ is absolutely continuous with respect to $\vol$.

\begin{enumerate}
 \item  The solution $\gamma$ to \eqref{K} with cost function \eqref{bccost} is concentrated on the graph of a mapping $(F_2,F_3,...,F_m)$ over the first variable and is unique.  

\item   There exists a unique minimizer  $\bar x_{\lambda}(x_1,x_2,...,x_m)$  of $x \mapsto \sum_{i=1}^m \lambda_i d^2(x_i,y)$  for $\gamma$ almost all $(x_1,x_2,...,x_m)$, and moreover this gives a $\gamma$-a.e one-to-one map $\bar x_{\lambda}: \spt \gamma \to M$.

\item Moreover, applying 1 and 2 to a result of Carlier-Ekeland \cite[Proof of Proposition 3]{CE}, we get
\begin{equation*}
\nu:=\overline{x}_{\lambda\#} \gamma
\end{equation*}
is the unique Wasserstein barycenter measure of the measures $\mu_1, ..., \mu_m$ with weights $\lambda_i$.

\end{enumerate}

\end{theorem}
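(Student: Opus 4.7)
My plan is to adapt the single-marginal Brenier--McCann strategy to the multi-marginal problem, using Kantorovich duality and an envelope computation in the first variable to extract the transport map, and then appeal to the Carlier--Ekeland reduction for the barycenter statement. First I would produce a minimizer $\gamma$ of the primal problem \eqref{K} by weak-$*$ compactness and lower semicontinuity (the cost $c$ is continuous on the compact space $M^m$). Multi-marginal Kantorovich duality then yields potentials $u_1,\dots,u_m:M\to\mathbb{R}$ with $\sum_i u_i(x_i)\le c(x_1,\dots,x_m)$ everywhere and equality on $\spt\gamma$. Since $c$ is the minimum over $z$ of functions of $x_i$ that are semi-concave with a constant uniform in $z$, $c$ is uniformly semi-concave in each variable, and this passes to each $u_i$ through the $c$-conjugate operation. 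Hence each $u_i$ is Lipschitz and differentiable $\vol$-a.e.\ on $M$.

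The heart of the argument is a first-order analysis at the first marginal. At any $(x_1,\dots,x_m)\in\spt\gamma$ at which $u_1$ is differentiable --- a set of full $\mu_1$-measure, since $\mu_1\ll\vol$ --- the function $y\mapsto c(y,x_2,\dots,x_m)-u_1(y)$ attains its minimum at $y=x_1$, so the envelope theorem applied to the defining minimization $c=\min_z\sum_i\lambda_i d^2(x_i,z)/2$ yields
\begin{equation*}
Du_1(x_1)=D_{x_1}c(x_1,\dots,x_m)=-\lambda_1\exp_{x_1}^{-1}(\bar x_\lambda),
\end{equation*}
at tuples where the minimizer $\bar x_\lambda$ is unique. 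Thus $\bar x_\lambda=\exp_{x_1}\bigl(-Du_1(x_1)/\lambda_1\bigr)$ is a measurable function of $x_1$ alone. Repeating the analysis at each other variable gives $Du_i(x_i)=-\lambda_i\exp_{x_i}^{-1}(\bar x_\lambda)$, and the two-marginal Brenier--McCann inversion (for cost $\lambda_i d^2/2$, exploiting the $c$-convex structure of $u_i$) expresses $x_i$ as a function of $\bar x_\lambda$, hence of $x_1$; these are the maps $F_i$ of Part 1. Uniqueness of $\gamma$ follows because any two optimizers share the same potentials, hence are concentrated on the same graph. Part 2 is a byproduct: reading the same envelope identity from either end shows $\bar x_\lambda$ is injective on $\spt\gamma$, and uniqueness of $\bar x_\lambda$ at $\gamma$-a.e.\ tuple follows because non-uniqueness would contradict the single-valued first-order formula already derived.

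For Part 3, on $\spt\gamma$ we have $c(x_1,\dots,x_m)=\sum_i\lambda_i d^2(x_i,\bar x_\lambda)/2$, so pushing $\gamma$ forward under $\bar x_\lambda$ gives $\nu:=\bar x_{\lambda\#}\gamma\in P(M)$ with $\int c\,d\gamma=\tfrac12\sum_i\lambda_i W_2^2(\mu_i,\nu)$, each two-marginal coupling $(\pi_i,\bar x_\lambda)_\#\gamma$ being optimal because its transport cost saturates the dual pairing with $u_i$. Any candidate barycenter $\nu'$ yields, by gluing optimal $(\mu_i,\nu')$ couplings into a multi-marginal $\gamma'$, the bound $\int c\,d\gamma'\le\tfrac12\sum_i\lambda_i W_2^2(\mu_i,\nu')$, so multi-marginal optimality of $\gamma$ forces $\nu'$ to tie with $\nu$, exactly as in Carlier--Ekeland. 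The principal obstacle I expect is the inversion in the middle paragraph: upgrading the pointwise relations $Du_i(x_i)=-\lambda_i\exp_{x_i}^{-1}(\bar x_\lambda)$ to a genuine single-valued map requires simultaneously avoiding the cut locus of $\bar x_\lambda$ relative to $x_i$ and the nondifferentiability set of $u_i$, and this is precisely where absolute continuity of $\mu_1$ and the semi-concavity of the potentials earn their keep.
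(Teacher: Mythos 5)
The paper gives no proof of this statement; it is quoted from the authors' earlier work [KP, Sections 4--5], so there is no in-paper argument to compare against directly. Your sketch, though, has a genuine gap in the middle paragraph, and the obstacle you flag at the end misdiagnoses it. The first-order identity in the first slot is fine: at a full $\mu_1$-measure set of points $u_1$ is differentiable (this is the one place absolute continuity of $\mu_1$ is used), the one-sided touching $\sum_j u_j \le c$ with equality on $\spt\gamma$ forces $D_{x_1}c$ to exist and equal $Du_1(x_1)$, and the envelope identity then yields $\bar x_\lambda = \exp_{x_1}\bigl(-Du_1(x_1)/(2\lambda_1)\bigr)$ together with uniqueness of the barycenter. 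So $\bar x_\lambda$ is indeed a function of $x_1$.

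The breakdown occurs when you try to recover $x_i$, $i\ge 2$, from $\bar x_\lambda$ via a ``two-marginal Brenier--McCann inversion'' of $u_i$. Two independent obstructions block this. First, only $\mu_1$ is assumed absolutely continuous; the potentials $u_i$ for $i\ge 2$ are differentiable only $\vol$-a.e., which need not be a $\gamma$-full set since $\mu_i$ may be singular (take $\mu_2 = \delta_p$: the identity $Du_2(x_2) = -2\lambda_2\exp_{x_2}^{-1}(\bar x_\lambda)$ may fail at $p$, even though $x_2\equiv p$ makes the graph conclusion trivially true). Second, even where the identity holds it expresses $\bar x_\lambda$ as a function of $x_i$, not the reverse; a.e.\ injectivity of that map, or equivalently differentiability of the conjugate potential at $\bar x_\lambda$, is exactly what Brenier--McCann supplies only when the target $\nu = \bar x_{\lambda\#}\gamma$ is absolutely continuous --- and that is Theorem~\ref{thm: ac-for finite}, proved \emph{later} in the paper \emph{using} the present theorem as input, so invoking it here is circular. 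The argument in [KP] sidesteps both issues: having pinned down $\bar x_\lambda$ from $x_1$, it shows directly that two points of $\spt\gamma$ with the same first coordinate must coincide, using $c$-cyclical monotonicity of $\spt\gamma$ and the second-order nondegeneracy of $z\mapsto d^2(\cdot,z)$ at the common barycenter (a ``twist on splitting sets'' argument), with no differentiability needed for $u_i$, $i\ge2$, and no regularity assumed on $\nu$. Your closing paragraph correctly senses trouble around the inversion, but attributes it to cut-locus and nondifferentiability bookkeeping, when the structural problem is that you are inverting the transport map in a direction for which no regularity of the target measure is available. Parts 2 and 3, as you present them, are essentially correct once Part 1 is in place, and Part 3's reduction to Carlier--Ekeland is indeed what the paper cites.
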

In particular, the assertions 2 and 3 will be important for us.   
\subsection{Geometric barycenters on Riemannian manifolds: volume distortion}
In the remainder of the present section, we discuss  geometric barycenters on a Riemannian manifold and introduce the volume distortion constants associated to them.
Given a probability measure $\lambda$ on $M$, we denote its set of barycenters by
$$
BC(\lambda): = \argmin \big( y \mapsto \int_{M} c(y,x)d\lambda(x) \big)
$$
 We introduce the notation:
\begin{align*}
 bc_\lambda (x_1, ..., x_m) = BC \left(\sum_{i=1}^m \lambda_i \delta_{x_i}\right),
\end{align*}
for the barycenter of the discrete measure with weights $\lambda_1, ..., \lambda_m >0 $.

We will also require the following notion:
\begin{definition}[\bf Volume distortion]
Let $\lambda$ be a Borel probability measure on $M$ with a unique barycenter $\bar x$ (that is, such that $BC(\lambda) $ is a singleton).  We define the genralized, or barycentric, volume distortion coefficients  at $y \notin cut(\bar x)$
\begin{align}\label{eq: alpha}
\alpha_{\lambda}(y) :=\frac{\det[-D^2_{yz}\big|_{z=\bar x} c(y, z)]}{\det [ \int_M D^2_{zz} \big|_{z=\bar x}c(x, z) d\lambda(x)]}
\end{align}
where $D^2_{zz} c (x, z)$ denotes the Hessian of the function $z\mapsto c(x, z)$,
and the determinants are computed in  exponential local coordinates at $\bar x$ and $y$.  
\end{definition}

\begin{remark}[\bf Justification of the name volume distortion for $\alpha_{\lambda}$]
Volume distortion coefficents were introduced in \cite{c-ems}; they capture the way the volume of a small ball is distorted as it is slid along geodesics ending at a common fixed point.

Our generalized coefficients, roughly speaking, capture the way that the volume of a small ball centred at $x \in spt(\lambda)$ is distorted by interpolating between points in this ball and the other points in the support of a probability measure $\lambda$ on $M$; the classical coefficents correspond to the case when $\lambda$  is concentrated at two points.  We make this analogy precise below, in the case that $\lambda$ is finitely supported.

Suppose  
\begin{enumerate}
\item 
 $\lambda = \sum_{i=1}^m \lambda_i \delta_{x_i}$ has finite support and assume that, for $y$ near $x_j$, $$ BC({ \sum_{i\neq j}^m \lambda_i \delta_{x_i}}+\lambda_j \delta_y )$$  is a singleton;
 \item the function $ y \mapsto BC({ \sum_{i\neq j}^m \lambda_i \delta_{x_i}}+\lambda_j \delta_y )$ is differentiable at $x_j$.  
 \end{enumerate}
 
 We claim that, for a fixed index $j$,
$$
\alpha_{\lambda}  (x_j) =\lim_{r \rightarrow 0}   \frac{\vol (BC(\lambda, B_{ r}(x_j) ))}{\vol (B_{\lambda_j r}(x_j))}
$$
where $BC(\lambda, B_{ r}(x_j) ) = \cup_{y \in B_{ r}(x_j) } BC({ \sum_{i\neq j}^m \lambda_i \delta_{x_i}}+\lambda_j \delta_y )$.

In particular, when $\lambda = t\delta_x + (1-t) \delta_y$, we have $\alpha_{\lambda}(x) =v_{1-t}(y,x)$, where $v_{1-t}(y,x)$ is the volume distortion coefficient of \cite{c-ems}.
\end{remark}
\begin{proof}
 From  assumption 1,  we can define $\bar x(x_1,..., x_m) =BC({ \sum_{i=1j}^m \lambda_i \delta_{x_i}}) $. Now, the function   $$z \mapsto \sum_{i=1}^m\lambda_i  c(x_i,z ) $$ is  differentiable near $z=\bar x(x_1,...x_m)$ (for a proof, see e.g. \cite[Lemma 3.1]{KP}); moreover,  by minimality, we have
$$\sum_{i=1}^m\lambda_i D_{z}\Big|_{z=\bar x} c(x_i,z)=0.$$
 From  assumption 2, we can differentiate the last equation with respect to $x_j$, which  yields
$$
\sum_{i=1}^m\lambda_i D^2_{zz}c(x_i,\bar x) \cdot D_{x_j} \bar x+ \lambda_j D^2_{z x_j}c(x_j, \bar x) =0.
$$
After taking determinants and rearranging, we have,
\begin{align}\label{eq: det ratio}
\det (D_{x_j} \bar x) & = \frac{\lambda_j^{n} \det[ -D^2_{z x_j}c(x_j, \bar x) ]}{\det[\sum_{i=1}^m\lambda_i D^2_{zz}c(x_i,\bar x)]}
\end{align}
Notice that the absolute value of the left-hand side of \eqref{eq: det ratio} is the volume distortion $$\lim_{r \rightarrow 0} \frac{\vol(BC(\lambda, B_{ r}(x_j) ))}{\vol (B_{ r}(x_j))};$$ since all the terms on the righthand side are nonnegative (see Lemma \ref{expcon} below),  dividing \eqref{eq: det ratio} by $\lambda_j^n$ yields the desired result.
\end{proof}
Before concluding this section, we prove a result relating the $\alpha_{\lambda}$ to the Ricci curvature of $M$.
 Let us fix the notation:
\begin{align}\label{eq: L and S K}
S_K (d) & =\left \{
  \begin{array}{l l}
  \frac{  \sin(\sqrt{K}d)}{\sqrt K d}& \quad \text{if $K>0$}\\
   1 & \quad \text{if $K=0$}\\
  \frac{  \sinh(\sqrt{-K}d)}{\sqrt{- K} d}& \quad \text{if $K<0$}
  \end{array}
\right .
\end{align}
We will need the following lemma, whose proof is based on an argument in \cite{c-ems}.
 \begin{lemma}\label{hessbounds}
Suppose $-K \leq 0$ is a lower bound for the Ricci curvature on $M$.  Then $$\trace [(D_{xx}^2c(x,y))] \leq n
\frac{\sqrt{K}d(x,y)}{\tanh(\sqrt{K}d(x,y))}.
 $$  
\end{lemma}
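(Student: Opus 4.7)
The plan is to combine the product-rule identity for $c=\tfrac12 d^2$ with the classical Bochner--Riccati approach to the Laplacian comparison theorem.  Writing $d=d(x,y)$, at any point $x$ outside the cut locus of $y$ one has
\[
\mathrm{Hess}_x c = \nabla_x d\otimes\nabla_x d + d\cdot \mathrm{Hess}_x d;
\]
taking the trace and using $|\nabla_x d|=1$ yields
\[
\trace(\mathrm{Hess}_x c(x,y)) = 1 + d(x,y)\,\Delta_x d(\cdot,y)(x),
\]
reducing the lemma to a Laplacian comparison estimate for $\Delta_x d(\cdot,y)$.

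For the Laplacian comparison, I would argue along the minimizing geodesic $\gamma\colon[0,r]\to M$ from $y$ to $x$, with $r=d(x,y)$.  Letting $A(t)$ denote the Jacobi field matrix normal to $\dot\gamma$ with $A(0)=0$ and $A'(0)=I$, the shape operator $U(t)=A'(t)A(t)^{-1}$ satisfies the matrix Riccati equation $U'+U^2+R=0$, where $R=R(\cdot,\dot\gamma)\dot\gamma$.  Taking trace and invoking Cauchy--Schwarz $\trace(U^2)\ge (\trace U)^2/(n-1)$ together with the hypothesis $\trace R=\mathrm{Ric}(\dot\gamma,\dot\gamma)\ge -K$, the scalar $u(t)=\trace U(t)$ satisfies
\[
u' + \frac{u^2}{n-1} \le K, \qquad u(t)\sim \frac{n-1}{t}\text{ as }t\to 0^+.
\]
Comparison with the sharp model ODE on the constant-Ricci space form then yields $u(r)\le \sqrt{(n-1)K}\,\coth(\sqrt{K/(n-1)}\,r)$.

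The last step is to verify the elementary inequality
\[
1 + (n-1)s\coth s \;\le\; n\sqrt{n-1}\,s\coth(\sqrt{n-1}\,s) \qquad (s\ge 0),
\]
applied at $s=\sqrt{K/(n-1)}\,r$, which upgrades the bound on $1+r\,u(r)$ to the cleaner expression $n\sqrt{K}\,d/\tanh(\sqrt K d)$ stated in the lemma.  This inequality is routine calculus: both sides agree at $s=0$ with vanishing first derivative, the second-order coefficient is strictly larger on the right, and the large-$s$ ratio tends to $\sqrt{n-1}/n<1$.  The main obstacle is the Riccati comparison itself, especially correctly handling the initial blow-up $u(t)\sim(n-1)/t$; the product-rule decomposition, trace computation, and final algebraic clean-up are routine, and an approximation argument (or viscosity/distributional interpretation) handles the case where $x$ lies in the cut locus of $y$.
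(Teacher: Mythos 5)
Your argument takes a genuinely different route from the paper's, and the last step has a gap as written.

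The paper's proof is a one-line reference to the second-variation argument of \cite[Lemma 3.12]{c-ems}: along the minimizing geodesic $\gamma$ from $y$ to $x$, each transverse eigenvalue of $D^2_{xx}c$ is bounded above by the index form evaluated on the test field $f(t)E(t)$ with $E$ parallel and $f(t)=\sinh(\sqrt K t)/\sinh(\sqrt K d)$; summing over an orthonormal frame of $\dot\gamma^\perp$ converts the sectional curvature terms $\sec(E_i,\dot\gamma)$ into $\mathrm{Ric}(\dot\gamma,\dot\gamma)$, so the hypothesis $\mathrm{Ric}\ge -K$ yields the stated trace bound directly, with no further optimization or algebra. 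You instead reduce to the sharp Laplacian comparison $\Delta_x d \le \sqrt{(n-1)K}\coth\bigl(\sqrt{K/(n-1)}\,d\bigr)$ via the product-rule identity $\trace(D^2_{xx}c)=1+d\,\Delta_x d$, producing a sharper intermediate estimate, and then appeal to an elementary inequality to relax it to the lemma's cleaner form. That is a legitimate alternative, and it actually proves a slightly stronger bound, but it is not the paper's route and it pushes all the work into the closing algebra.

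That closing algebra is where the gap lies. The inequality
\[
1+(n-1)s\coth s \;\le\; n\sqrt{n-1}\,s\coth\bigl(\sqrt{n-1}\,s\bigr), \qquad s\ge 0,\ n\ge 2,
\]
is what you must verify, and the justification you offer (agreement at $s=0$, larger second-order Taylor coefficient on the right, ratio tending to $\sqrt{n-1}/n<1$ at infinity) only controls the two ends of $[0,\infty)$; it does not preclude a crossing at an intermediate $s$, so it is not a proof. The inequality is true; one clean way to close it is to set $g(t)=t\coth t$, note that $g'(t)=\coth t - t/\sinh^2 t$ is nonnegative and nondecreasing because $g''(t)=2(t\coth t-1)/\sinh^2 t\ge 0$, and differentiate $h(s)=n\,g(\sqrt{n-1}\,s)-1-(n-1)g(s)$ to obtain
\[
h'(s)=n\sqrt{n-1}\,g'(\sqrt{n-1}\,s)-(n-1)g'(s)\ge (n-1)\bigl(g'(\sqrt{n-1}\,s)-g'(s)\bigr)\ge 0,
\]
using $n\sqrt{n-1}\ge n-1$ and $\sqrt{n-1}\ge 1$; since $h(0)=0$ this gives $h\ge 0$. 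With that step supplied, your argument is correct.
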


The proof is exactly as in \cite[Lemma 3.12]{c-ems}, but we take the trace over a orthonormal basis to get  from sectional to Ricci curvature.

\begin{lemma}\label{expcon}
Suppose $K$ is a lower bound for the Ricci curvature on $M$. Then 
$$
\det(-D_{xy}^2c(x,y)) \geq  [S_{K}(d(x,y))]^{-n+1}
$$
where $S_K$ is given in \eqref{eq: L and S K}.

\end{lemma}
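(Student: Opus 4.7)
The plan is to reduce the statement to a Jacobi field volume comparison along the minimizing geodesic from $x$ to $y$. The starting point is the key identity from the paper, $-D_x c(x,y) = \exp_x^{-1}(y)$. For $y \notin \mathrm{cut}(x)$, differentiating this in $y$ produces
\[
-D^2_{xy} c(x,y) \;=\; d(\exp_x^{-1})_y \;=\; \bigl[(d\exp_x)_v\bigr]^{-1},\qquad v:=\exp_x^{-1}(y)\in T_xM,
\]
interpreted as a linear isomorphism $T_yM\to T_xM$. Taking determinants,
\[
\det\bigl(-D^2_{xy}c(x,y)\bigr)\;=\;\frac{1}{\det(d\exp_x)_v},
\]
so the claim is equivalent to the upper bound $\det (d\exp_x)_v \le S_K(d(x,y))^{\,n-1}$.

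Next I would set up the Jacobi fields along the geodesic $\gamma(t)=\exp_x(tv)$, $t\in[0,1]$. Choose an orthonormal basis $e_1,\dots ,e_{n-1},e_n$ of $T_xM$ with $e_n=v/|v|$, and let $J_i$ be the Jacobi field along $\gamma$ with $J_i(0)=0$, $J_i'(0)=e_i$. Then $(d\exp_x)_v(e_i)=J_i(1)$ for $i<n$, while $(d\exp_x)_v(e_n)=\dot\gamma(1)/|v|$ is a unit vector. Moreover, $J_i(1)\perp\dot\gamma(1)$ for $i<n$ because $\langle J_i,\dot\gamma\rangle$ is affine and vanishes at $t=0$ with vanishing derivative. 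Consequently,
\[
\det (d\exp_x)_v \;=\; \bigl|\det\bigl(J_1(1),\dots ,J_{n-1}(1)\bigr)\bigr|_{\dot\gamma^\perp},
\]
an $(n-1)$-dimensional Gram determinant in the orthogonal complement of $\dot\gamma(1)$.

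To finish, I would apply the classical Bishop/Heintze--Karcher-type Jacobi volume comparison, which is exactly the tool driving the volume distortion estimates in \cite{c-ems}. Writing $\mathcal{J}(t):=|\det(J_1(t),\dots ,J_{n-1}(t))|/t^{n-1}$, the Riccati equation satisfied by the shape operator of the geodesic radial foliation, combined with the Ricci lower bound $K$ along $\dot\gamma$, yields (by a one-dimensional ODE comparison with the constant-curvature model) $\mathcal{J}(t)\le S_K(t|v|)^{\,n-1}/1^{n-1}$ on $[0,d(x,y)]$. Evaluating at $t=1$ and using $|v|=d(x,y)$ gives
\[
\det(d\exp_x)_v \;\le\; S_K(d(x,y))^{\,n-1},
\]
and inverting produces the claimed bound $\det(-D^2_{xy}c(x,y))\ge S_K(d(x,y))^{-(n-1)}$.

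The only technical obstacle I anticipate is the bookkeeping in the last step: one must be careful that the Ricci lower bound $K$ enters with the correct normalization so that the scalar comparison function is precisely $S_K$ (and not $S_{K/(n-1)}$ or a similar variant), and one must treat the degenerate case $v=0$ and the case $K\ne 0$ for the various signs of $K$ on the same footing. These are exactly the computations already carried out in \cite[proof of Lemma 3.11/3.12]{c-ems}, so I would simply invoke them after setting up the Jacobi field picture above.
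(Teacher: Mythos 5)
Your approach is essentially the same as the paper's: both reduce the statement to a Jacobian estimate for $d\exp_x$ via the identity $-D^2_{xy}c(x,y)=[(d\exp_x)_v]^{-1}$ and then invoke the Bishop/Jacobi-field volume comparison. The paper does this in two lines by citing Bishop--Gromov directly (that $t\mapsto \det(-D^2_{xy}c(x,y_t))\,[S_K(d(x,y_t))]^{n-1}$ is nondecreasing, equal to $1$ at $t=0$); you unpack the same comparison explicitly via Jacobi fields, Gram determinants, and the Riccati ODE. This is a legitimate, if more verbose, way to arrive at the same inequality.

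Your normalization worry is well founded and should not be dismissed as mere bookkeeping. With the standard convention, $\mathrm{Ric}\ge K$ (as a bilinear form) yields, via the scalar comparison $\phi''+\frac{K}{n-1}\phi\le 0$ for $\phi=\mathcal{J}^{1/(n-1)}$, the bound $\det(d\exp_x)_{v}\le S_{K/(n-1)}(d(x,y))^{n-1}$, and hence $\det(-D^2_{xy}c)\ge S_{K/(n-1)}(d(x,y))^{-(n-1)}$. For $K\le 0$ (the only regime the paper actually uses, namely Proposition \ref{prop: alpha bound} with $K=0$ and \eqref{eq: Ric -K case} with $\mathrm{Ric}\ge -K$, $K\ge 0$) one has $S_{K/(n-1)}\le S_K$, so the stated conclusion follows a fortiori. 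For $K>0$, however, $S_{K/(n-1)}>S_K$ and the stated inequality would require the stronger hypothesis $\mathrm{Ric}\ge (n-1)K$. So the lemma as written overstates slightly for positive $K$; your proof plan would expose this, and the precise form you should prove is the $S_{K/(n-1)}$ version, which suffices everywhere it is used downstream.
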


\begin{proof}
Note that $-D_{xy}^2c(x,y)$ is the inverse of $d\exp_x(\cdot)$ evaluated at $-D_xc(x,y)$.  Therefore, by the  Bishop-Gromov volume comparison theorem (see, e.g., \cite{BiCr}, section 11.10, Theorem 15), 
$$t \mapsto \det(-D_{xy}^2c(x,y_t)) \cdot [S_{K}(d(x,y_t))]^{n-1}$$
is nondecreasing along a geodesic $y_t$ starting at $y_0=x$. As this function is $1$ when $t=0$, the result follows.
\end{proof}

\begin{proposition}[\bf Distortion under $Ric\ge 0$]\label{prop: alpha bound}
Suppose the Ricci curvature of $M$ is everywhere nonnegative, i.e., $Ric \geq 0$.  Then, for any $x \in M$ and $\lambda \in P(M)$, we have
$$
\alpha_{\lambda}(x)  \geq 1.
$$
\end{proposition}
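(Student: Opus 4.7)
The plan is to bound the numerator of $\alpha_\lambda$ below by $1$ and the denominator above by $1$ separately; both bounds will come from the curvature lemmas just proved, specialized to $K=0$.

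For the numerator, I would invoke Lemma \ref{expcon} directly with $K=0$. Since $S_0 \equiv 1$, we get $\det[-D^2_{yz}|_{z=\bar x} c(y,z)] \geq [S_0(d(y,\bar x))]^{-n+1}=1$.

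For the denominator, set $A := \int_M D^2_{zz}|_{z=\bar x} c(x,z)\, d\lambda(x)$. Apply Lemma \ref{hessbounds} at each point $x$ (with $K=0$, interpreting the right-hand side $\sqrt{K}d/\tanh(\sqrt{K}d)$ via the limit $1$ as $K\to 0^+$), and use the symmetry $c(x,z)=c(z,x)$ so that the bound on $\trace[D^2_{xx}c]$ translates into a bound on $\trace[D^2_{zz}c(x,\cdot)|_{z=\bar x}]\le n$. Integrating against $\lambda$ and interchanging trace with integral yields $\trace(A)\le n$.

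To conclude I would observe that $A$ is symmetric and positive semi-definite: symmetry is inherited from each Hessian, while $A\succeq 0$ follows from $\bar x$ being a minimizer of $z\mapsto \int_M c(x,z)\,d\lambda(x)$ via the second-order necessary condition for a local minimum, taken in exponential coordinates at $\bar x$ (where the metric is Euclidean so $\trace$ and $\det$ behave as in $\R^n$). Then the AM-GM inequality applied to the nonnegative eigenvalues of $A$ gives
$$\det(A) \leq \left(\frac{\trace(A)}{n}\right)^{\!n} \leq 1,$$
and combining with the numerator bound produces $\alpha_\lambda(y)\geq 1$.

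The only real subtlety, rather than a genuine obstacle, is justifying that one may pass the Hessian in $z$ under the integral sign defining $A$, which amounts to smoothness of $z\mapsto c(x,z)$ at $z=\bar x$ for $\lambda$-almost every $x$; this is fine whenever $\bar x$ is not in $cut(x)$, and is handled on the remainder (if any) by the usual semiconvexity / approximation arguments that already underlie Theorem \ref{thm: McCann}. Everything else is a two-line combination of Lemmas \ref{hessbounds} and \ref{expcon} with the AM-GM inequality.
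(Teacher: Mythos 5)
Your proposal is correct and follows essentially the same route as the paper: both bound $\det[-D^2_{yz}c] \geq 1$ via Lemma \ref{expcon} with $K=0$, establish positive semi-definiteness of $\int_M D^2_{zz}|_{z=\bar x}c(x,z)\,d\lambda(x)$ from minimality at the barycenter, obtain $\trace \le n$ from Lemma \ref{hessbounds}, and close with AM--GM on the eigenvalues. The paper handles the differentiation-under-the-integral subtlety you flag at the end explicitly via semi-concavity of $z\mapsto c(x,z)$ together with Fatou's lemma, which is the clean way to make your last paragraph rigorous.
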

\begin{proof}
{Minimality of $z \mapsto \int_M c(x,z) d\lambda(x)$ at the barycenter $\bar x$, combined with semi-concavity of  $z \mapsto c(x,z)$ and Fatou's lemma yields $$\int_M D^2_{zz} \big|_{z=\bar x}c(x, z) d\lambda(x) \geq 0$$ as a matrix (notice that until this moment we do not need any assumption on the curvature).  Now, as $\lambda$ is a probability measure,   Lemma \ref{hessbounds} with $K=0$ implies 
$$
\trace [\int_M D^2_{zz} \big|_{z=\bar x}c(x, z) d\lambda(x)] \le n;
$$
applying the geometric-arithmetic mean inequality to the nonnegative matrix $\int_M D^2_{zz} \big|_{z=\bar x}c(x, z) d\lambda(x)$ yields 
$$
\det [\int_M D^2_{zz} \big|_{z=\bar x}c(x, z) d\lambda(x)] \le 1.
$$
Combining this  with the inequality
$\det [-D_{yz}^2 \big|_{z=\bar x} c(y, z)] \ge 1$
(from Lemma \ref{expcon} with $K=0$), yields the desired result.  }
\end{proof}

{More generally, if $Ric \geq -K$ (for $K\ge0$), we have 
\begin{align}\label{eq: Ric -K case}
\alpha_{\lambda}(x) \ge C(\diam(M), K, n)
\end{align} where 
$\diam(M)$
is the diameter of the manifold and 
\begin{align*}
& C(\diam(M), K, n) \\&:=
\begin{cases}
    1  & \text{for $K=0$}, \\
     \left(S_{-K}(\diam (M))^{-n+1}\cdot   \frac{\sqrt{K}\diam(M)}{\tanh(\sqrt{K}\diam(M))} 
      \right)^{-n},  & \text{for $K>0$}.
\end{cases}
\end{align*}
}

\section{The Wasserstein barycenter: existence and uniqueness}\label{sec: exist and unique}

Let us  recall the notion of a Wasserstein barycenter of a probability measure $\Omega$ on $P(M)$ (Definition \ref{def: W bary} in the introduction).   Wasserstein barycenters were considered previously by Agueh-Carlier, who established existence and uniqueness results for finitely supported measures $\Omega \in P(P(\mathbb{R}^n))$ when the underlying space is Euclidean \cite{ac}.  Other variants of these results can be found in \cite{CE} \cite{P5} and \cite{P9}.

We present below a general existence and uniqueness result, which encompasses the earlier results found in \cite{ac} \cite{CE} \cite{P5} and \cite{P9}.  The proof is essentially the same as the argument found in \cite{P5}, but is included in the interest of completeness.
\begin{theorem}[\bf Existence and uniqueness of the Wasserstein barycenter]\label{thm: existence and uniqueness}
 Recall the assumptions and notation in Section~\ref{sec: notation}
 If $\Omega(P_{ac}(M)) >0$, then there exists a unique Wassertein barycenter of $\Omega$. 
\end{theorem}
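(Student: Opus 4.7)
My approach splits into existence via weak-$*$ compactness and uniqueness via strict convexity of $F(\nu) := \int_{P(M)} W_2^2(\mu,\nu)\,d\Omega(\mu)$ along linear interpolations of measures.

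For existence, I would first verify that $F$ is continuous on $(P(M),W_2)$. Since $M$ is compact, $W_2^2(\mu,\nu) \le \diam(M)^2$ uniformly, and for each fixed $\mu$ the triangle inequality gives $|W_2(\mu,\nu_n)-W_2(\mu,\nu)| \le W_2(\nu_n,\nu)$, so $\nu \mapsto W_2^2(\mu,\nu)$ is $W_2$-continuous; dominated convergence then yields continuity of $F$. Compactness of $M$ also forces $(P(M),W_2)$ to be compact, so any minimizing sequence admits a convergent subsequence whose limit is a minimizer.

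For uniqueness, I would prove strict convexity of $F$ under linear interpolation of measures. Given $\nu_0 \neq \nu_1$ in $P(M)$, $t \in (0,1)$, and $\nu_t := (1-t)\nu_0 + t\nu_1$, pick any $\mu \in P(M)$ together with optimal couplings $\pi_0 \in \Pi(\mu,\nu_0)$ and $\pi_1 \in \Pi(\mu,\nu_1)$. The mixture $\pi_t := (1-t)\pi_0 + t\pi_1$ lies in $\Pi(\mu,\nu_t)$ and serves as a competitor, yielding
\begin{equation*}
W_2^2(\mu, \nu_t) \le (1-t)\,W_2^2(\mu,\nu_0) + t\,W_2^2(\mu,\nu_1).
\end{equation*}
When $\mu \in P_{ac}(M)$, the Brenier--McCann theorem (Theorem~\ref{thm: McCann}) forces each $\pi_i$ to be concentrated on the graph of a transport map $T_i$, and $T_{0\#}\mu = \nu_0 \neq \nu_1 = T_{1\#}\mu$ implies $T_0 \neq T_1$ on a $\mu$-positive set. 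Thus $\pi_t$ is supported on two distinct graphs and cannot coincide with the unique graph-supported optimal plan from $\mu$ to $\nu_t$ (again by Theorem~\ref{thm: McCann}), so the displayed inequality is \emph{strict} for such $\mu$.

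Since $P_{ac}(M)$ is Borel by Proposition~\ref{prop: Borel P ac} and $\Omega(P_{ac}(M)) > 0$ by hypothesis, integrating against $\Omega$ preserves the strict inequality on this set and yields $F(\nu_t) < (1-t)F(\nu_0) + t\,F(\nu_1)$. If $\bar\nu_0 \neq \bar\nu_1$ were two minimizers, this strict convexity at $t = 1/2$ would contradict their minimality. I do not foresee a serious obstacle: the main conceptual point is simply that linear (rather than displacement) interpolation of measures is the right convexity structure for this functional, and the $\Omega$-measurability of $P_{ac}(M)$ needed to lift the pointwise strict inequality to an integrated one is precisely what Proposition~\ref{prop: Borel P ac} delivers.
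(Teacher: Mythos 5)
Your proposal is correct and follows essentially the same route as the paper: existence via compactness of $(P(M),W_2)$ and continuity of the objective (the paper phrases this as uniform Lipschitz continuity of $\nu\mapsto W_2^2(\mu,\nu)$, but this is the same idea), and uniqueness via strict convexity of $F$ along \emph{linear} interpolations $\nu_t=(1-t)\nu_0+t\nu_1$, using mixture couplings for convexity and the Brenier--McCann uniqueness of graph-supported optimal plans to upgrade to strictness when $\mu\in P_{ac}(M)$, then integrating against $\Omega$ using $\Omega(P_{ac}(M))>0$. The only point worth making fully explicit in your strictness step is that $\pi_t$ being supported on the union of two graphs does not by itself preclude it from being a graph measure; rather, since $\pi_t$ disintegrates at $\mu$-a.e.\ $x$ as $(1-t)\delta_{T_0(x)}+t\delta_{T_1(x)}$, it fails to be graphical precisely on the $\mu$-positive set where $T_0\neq T_1$, which is exactly the paper's argument.
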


\begin{proof}
 Due to compactness of $M$, the set $P(M)$ of probability measures on $M$ is weak-* compact, or, equivalently, the Wasserstein space $(P(M), W_2)$ is compact. 
Now, for any $\mu$, the mapping $\nu \mapsto W_2^2(\mu,\nu)$ is uniformly Lipschitz on Wasserstein space, and therefore so too is $\nu \mapsto \int_{P(M)}W_2^2(\mu,\nu)d\Omega(\mu)$.  Therefore, existence of a minimizer follows immediately. 

The uniqueness will follow from the fact that, with respect to linear interpolation of measures, the functional $\nu \mapsto \int_{P(M)}W_2^2(\mu,\nu)d\Omega(\mu)$ is convex and the convexity is strict if $\Omega(P_{ac}(M)) >0$.  
We prove this below.

We begin by studying  $\nu \mapsto W_2^2(\mu,\nu)$.  Let $\nu_0, \nu_1  \in P(M)$.  Let $\gamma_{i}$ be optimal couplings between $\nu_i$ and $\mu$, for $i =0,1$, respectively.  We set $\nu_s = s\nu_1 +(1-s)\nu_0$ and  $\gamma_{s} = s\gamma_{1} +(1-s)\gamma_{0}$.  Noting that $\gamma_{s}$ has $\nu_s$ and $\mu$ as its margnals, we have
\begin{eqnarray}
W_2^2(\mu,\nu_s) &\leq& \int_{M \times M} d(x ,y)^2d\gamma_{s} \nonumber\\
&=& s\int_{M \times M} d(x ,y)^2d\gamma_{1} +(1-s)\int_{M \times M}d(x ,y)^2d\gamma_{0}\nonumber\\
&=& sW_2^2(\mu,\nu_1)+(1-s)W_2^2(\mu,\nu_0)\label{convexity}
\end{eqnarray}
This yields convexity of the function $\nu \mapsto W_2^2(\nu,\mu)$.  

Next, we will show this convexity is strict if $\mu \in P_{ac}(M)$.    By the Brenier-McCann theorem (Theorem \ref{thm: McCann}), there exists a unique optimal map $F_s: spt(\mu) \rightarrow spt(\nu_s)$ for each $s$, such that the unique optimal measure $\overline{\gamma_{s}}\in \Gamma(\mu,\nu_s)$ is concentrated on the graph $\{(x,F_s(x)\}$. 

We need to show that, assuming $\nu_0 \neq \nu_1$ and  $0< s < 1$, the inequality \eqref{convexity} is strict.  Note first that the inequality is strict unless $\gamma_s$ is an \textit{optimal} coupling between $\mu$ and $\nu_s$; by the uniqueness result, this means we must have $\gamma_s= \overline{\gamma_{s}} =(Id,F_s)_\# \mu$.  That is, $\gamma_s$ is concentrated on the graph of $F_s$.

On the other hand, $\gamma_s$ is concentrated on the union of two graphs, $F_0(x)$ and $F_1(x)$:
$$
\gamma_s = s(Id,F_1)_\# \mu +(1-s)(Id,F_0)_\# \mu.
$$
This is possible only if $F_0=F_1=F_s$ $\mu$ almost everywhere, which, in turn, implies $ \nu_0=(F_0)_\#\mu =(F_1)_\#\mu=\nu_1$.  This yields strict convexity of $\nu \mapsto W_2^2(\nu,\mu)$ whenever $\mu$ is absolutely continuous with respect to volume.

 Finally, integrating $\nu \mapsto W_2^2(\nu,\mu)$ with respect to $\Omega$   yields  convexity of    the functional $\nu \mapsto \int_{P(M)}W_2^2(\mu,\nu)d\Omega(\mu)$, and the convexity is strict  under the assumption $\Omega(P_{ac}(M))>0$. This implies uniqueness of its minimizer, the Wasserstein barycenter of $\Omega$. 

\end{proof}
\begin{remark}
 By inspecting the above proof, it is clear that Theorem~\ref{thm: existence and uniqueness} holds for more general spaces than  Riemannian manifolds. In fact, it holds for any (compact) metric space on which  the optimal maps, $T_\# \mu  = \nu$, exist uniquely for any arbitrary absolutely continuous source measure $\mu$.  This includes for example, Alexandrov spaces \cite{bertrand}.% 
\end{remark}
\begin{example}
 As an illuminating example, consider the round sphere. 
 If the $\Omega =\frac{1}{2}[ \delta_{p_n} + \delta_{p_s}]$ be the sum of two Dirac measures supported on the north and south pole, then its Wasserstein barycenter is not unique: any probability measure supported on the equator is a  Wasserstein  barycenter. 
 However, if we smear out one of the Dirac measures making it absolutely continuous, then the resulting Wasserstein barycenter will be a unique (in fact absolutely continuous) measure supported near the equator. 
\end{example}
\section{Properties of the Wasserstein barycenter: first and second order balance}\label{S:balance}

{
We develop here several properties of the Wasserstein  barycenter which we will use later on.  For some of these, we will need to assume that the Wasserstein barycenter is absolutely continuous with respect to volume.  Conditions on $\Omega$ ensuring this  absolutely continuity will be presented later on. The main results of this section are Theorems~\ref{thm: bccondition} and \ref{thm: detcontrol}, which are crucial for later sections.

\subsection{Differentiability of family of dual potentials}\label{SS: family of dual potentials}
 The key results of this subsection are \eqref{eq: derivative integrals} and \eqref{eq: 2nd deriv integral} for derivatives of the integral of a measurable family of dual potentials for optimal transport problems. We first establish the almost everywhere second differentiability of a certain measurable family of dual potentials:
\begin{lemma}[\bf a.e. $x$ and $\Omega$-a.e. $\mu$]\label{lem: a.e. diff}

Let $\bar \mu \in P(M)$ and for each $\mu \in P(M)$, let $u_\mu$ be the dual potential (determined modulo an  additive constant)  for the optimal transport problem \eqref{wass distance} between $\bar \mu$ and $\mu$.  Let $\Omega$ be a Borel probability on $P(M)$.  
For volume almost all $x$, $x \mapsto u_\mu(x)$ is twice differentiable for $\Omega$-almost all $\mu \in P(M)$.
\end{lemma}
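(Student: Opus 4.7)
The plan is to reduce the statement to Fubini-Tonelli on the product space $(M \times P(M), \vol \otimes \Omega)$. For a fixed $\mu$, the potential $u_\mu$ is $c$-convex with $c = \frac{1}{2}d^2$; since $M$ is compact and $c$ is $C^2$ away from the cut locus and globally semi-concave, $u_\mu$ is semi-convex with a constant that depends only on $M$, not on $\mu$. Alexandrov's theorem therefore gives that, for each fixed $\mu$, the set
\[
N_\mu := \{ x \in M : u_\mu \text{ is not twice differentiable at } x \}
\]
has zero volume measure. If I can show that the joint set
\[
N := \{ (x,\mu) \in M \times P(M) : x \in N_\mu \}
\]
is Borel measurable in the product $\sigma$-algebra, then
\[
(\vol \otimes \Omega)(N) \;=\; \int_{P(M)} \vol(N_\mu)\, d\Omega(\mu) \;=\; 0,
\]
and the other Fubini slice yields $\Omega(\{\mu : x \in N_\mu\}) = 0$ for $\vol$-a.e.\ $x$, which is exactly the desired conclusion.

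\textbf{Step 1: Measurable selection of $u_\mu$.} The main technical issue is that $u_\mu$ is only defined up to an additive constant, so I first need a Borel choice of this constant. I would normalize either by $\int_M u_\mu\, d\bar\mu = 0$ or by $u_\mu(x_0) = 0$ at a fixed point, and then use the representation
\[
u_\mu(x) \;=\; \sup_{y \in M}\bigl[ -c(x,y) - u_\mu^c(y) \bigr]
\]
together with the compactness of $M$ and the dual variational formulation of $W_2^2$ to deduce that $\mu \mapsto u_\mu$ is Borel measurable from $(P(M), W_2)$ into $C(M)$ (in fact, continuous on the set where the Kantorovich potential is unique up to constants). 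Composition with evaluation at $x$ then gives joint Borel measurability of $(x,\mu) \mapsto u_\mu(x)$.

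\textbf{Step 2: Borel measurability of $N$.} Using the uniform semi-convexity of $u_\mu$, the existence of a second-order Taylor expansion at $x$ can be rewritten as a countable intersection/union condition: $u_\mu$ is twice differentiable at $x$ iff there exist a vector $v \in T_xM$ and a symmetric matrix $A$ (which can be restricted to rational coordinates in a local chart) such that for every $k \in \mathbf{N}$ there is $\ell \in \mathbf{N}$ with
\[
\Bigl| u_\mu(y) - u_\mu(x) - \langle v, y-x \rangle - \tfrac{1}{2}\langle A(y-x), y-x \rangle \Bigr| \;\le\; \tfrac{1}{k}|y-x|^2
\]
for all $y$ with $|y-x| \le \tfrac{1}{\ell}$. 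Each of these conditions is Borel in $(x,\mu)$ by Step 1, so $N$ is Borel. Combined with Alexandrov's theorem applied slice-wise and Fubini as above, this completes the proof.

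\textbf{Main obstacle.} The substantive step is the measurable selection and joint Borel measurability in Step 1: one needs to verify that the choice of additive constant can be made in a Borel fashion and that the resulting map $\mu \mapsto u_\mu$ has the required measurability. Everything else is routine application of Alexandrov's theorem and Fubini-Tonelli.
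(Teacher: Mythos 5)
Your proof takes essentially the same route as the paper's: apply Alexandrov's second-differentiability theorem slicewise in $\mu$, then use Fubini--Tonelli on $(M \times P(M), \vol \otimes \Omega)$ to transfer the null-ness from the $x$-slices (volume) to the $\mu$-slices ($\Omega$); the paper phrases this as a contradiction argument, but it is the same computation. You go somewhat further than the paper in explicitly addressing the Borel measurability of the bad set, which the paper asserts only parenthetically. One small slip in Step~2: you cannot restrict the linear term $v$ to a countable dense set, since replacing $v$ by a nearby $v'$ introduces an error $\langle v - v', y - x\rangle$ of order $|y-x|$, not $o(|y-x|^2)$; the standard fix is to first set $v = \nabla u_\mu(x)$ on the full-measure Borel set where the gradient exists (Rademacher), so that only the Hessian candidate $A$ need be discretized.
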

\begin{proof}
The proof is a simple application of Fubini's theorem. Let $A \subset P(M) \times M$ be the set of points where the twice differentiability fails.  We are to show that its projection onto $P(M)$, namely, $A_x =\{\mu:(\mu,x) \in A \}$ has $\Omega$-measure zero, for almost all $x$ (notice that the set $A$ is measurable).  Assume by contradiction that  $A_x$ has positive $\Omega$-measure for some non-measure zero set of  $x$. Then there exists $\epsilon > 0$ and a set $B \subseteq M$ with $|B| >0$ such that $\Omega(A_x) \geq \epsilon$ for all $x \in B$.  Therefore, using Fubini's theorem, we have
\begin{align*}
(\Omega \times vol)(A) &= \int_M\int_ {P(M)} \chi_Ad\Omega dvol
 \geq \int_B\int_ {P(M)} \chi_A d\Omega dvol\\
 &= \int_B\int_ {P(M)} \chi_{A_x}  d\Omega dvol
=\int_B \Omega(A_x)dvol(x)\\
&\geq vol(B) \epsilon >0
\end{align*}
On the other hand, for each $\mu$, the dual potential $u_\mu$ is a semi-convex function \cite{c-ems} (recall $M$ is compact), so due to Alexandrov's second differentiability theorem,  $u_\mu$ is twice differentiable at Lebesgue (vol) a.e. points; i.e.  $A_\mu=\{x:(\mu,x) \in A \}$ has zero volume.  So we have
\begin{eqnarray*}
(\Omega \times vol)(A) = \int_{P(M)}\int_ M \chi_Advol d\Omega =\int_{P(M)}\int_ M \chi_{A_\mu} dvol d\Omega
 = \int_{P(M)}0 d\Omega=0.
\end{eqnarray*}
The contradiction implies the desired result
\end{proof}

From Lemma~\ref{lem: a.e. diff}, we see that for  Lebesgue almost every 
 $x$,  the maps $ \mu \mapsto \nabla_x u_\mu (x) $ and $ \mu \mapsto \nabla^2_x u_\mu(x)$  are well defined $\Omega(\mu)$ a.e.
{Now, an essential ingredient in our work is the function $x \mapsto \int_{P(M)}  u_\mu (x) d\Omega (\mu)$, where $u_\mu$ is  the dual potential function 
given in Lemma~\ref{lem: a.e. diff}.  Note that this function is Lipschitz and semi-convex since each $u_\mu$ is uniformly Lipschitz and semi-convex  (recalling that $M$ is compact).
By Rademacher's theorem and Alexandrov's second differentiability theorem, this function is twice differentiable for Lebesgue almost every $x$.  Moreover, applying Lemma~\ref{lem: a.e. diff},  for almost every $x$, 
we immediately have the following:
\begin{proposition}[{\bf Derivatives inside the integral $\int_{P(M)} d\Omega$}]\label{prop: derivatives inside integral}
\begin{align}\label{eq: derivative integrals}
\nabla_x \int_{P(M)}  u_\mu (x) d\Omega (\mu)&=  \int_{P(M)} \nabla_x u_\mu (x) d\Omega (\mu), \\\label{eq: 2nd deriv integral}
\nabla^2_x \int_{P(M)}  u_\mu (x) d\Omega (\mu)& \ge  \int_{P(M)} \nabla^2_x u_\mu (x) d\Omega (\mu).
\end{align}
\end{proposition}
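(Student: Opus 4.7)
The plan is to exploit the uniform Lipschitz and semi-convexity bounds on the family $\{u_\mu\}_{\mu \in P(M)}$ (both available because $M$ is compact, with constants $L$ and $\kappa$ independent of $\mu$), combined with Lemma~\ref{lem: a.e. diff}. I will work at a point $x$ satisfying simultaneously (i) twice Alexandrov differentiability of $U(x) := \int_{P(M)} u_\mu(x)\,d\Omega(\mu)$, which holds for Lebesgue a.e.\ $x$ since $U$ inherits the uniform Lipschitz and semi-convexity constants from the family $\{u_\mu\}$; and (ii) twice differentiability of $u_\mu$ at $x$ for $\Omega$-a.e.\ $\mu$, as supplied by Lemma~\ref{lem: a.e. diff}. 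The intersection of these two full-measure sets is still of full measure, and the entire argument is local: I carry it out in a fixed exponential chart at $x$, for an arbitrary tangent vector $v$.

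For the gradient identity \eqref{eq: derivative integrals}, I would form the first-order difference quotient
\begin{equation*}
\frac{U(x+tv)-U(x)}{t} \;=\; \int_{P(M)} \frac{u_\mu(x+tv)-u_\mu(x)}{t}\,d\Omega(\mu).
\end{equation*}
The integrand is bounded in absolute value by $L|v|$ uniformly in $t$ and $\mu$ and converges to $\nabla_x u_\mu(x)\cdot v$ for $\Omega$-a.e.\ $\mu$ by (ii). Dominated convergence as $t \to 0$ then gives $\nabla U(x)\cdot v = \int \nabla_x u_\mu(x)\cdot v\, d\Omega(\mu)$, and the arbitrariness of $v$ yields \eqref{eq: derivative integrals}.

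The Hessian inequality \eqref{eq: 2nd deriv integral} is more delicate because only one-sided second-order information is available on the $u_\mu$'s. I would instead use the symmetric second difference
\begin{equation*}
\frac{U(x+tv)+U(x-tv)-2U(x)}{t^2} \;=\; \int_{P(M)} \frac{u_\mu(x+tv)+u_\mu(x-tv)-2u_\mu(x)}{t^2}\,d\Omega(\mu).
\end{equation*}
Uniform $\kappa$-semi-convexity bounds each integrand from below by $-\kappa|v|^2$, which is $\Omega$-integrable, while for $\Omega$-a.e.\ $\mu$ the integrand converges to $v^{T}\nabla^{2}_{x}u_{\mu}(x)v$ by Alexandrov twice differentiability at $x$. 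The left-hand side converges to $v^{T}\nabla^{2}U(x)v$ by (i), so applying Fatou's lemma to the integrand shifted by $\kappa|v|^2$ delivers $v^{T}\nabla^{2}U(x)v \ge \int v^{T}\nabla^{2}_{x}u_{\mu}(x)v\, d\Omega(\mu)$, from which \eqref{eq: 2nd deriv integral} follows.

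The main obstacle is precisely the asymmetry in the second-order case: a pointwise second-order Taylor expansion of $u_\mu$ has a remainder $o(t^2)$ that depends on $\mu$ in an uncontrolled way, so one cannot extract equality in \eqref{eq: 2nd deriv integral} by a direct dominated-convergence argument. Semi-convexity, however, supplies exactly the one-sided control that Fatou's lemma requires, which both explains why only an inequality is claimed and delivers it cleanly. A minor side issue is measurability of $\mu \mapsto \nabla^{2}_{x}u_{\mu}(x)$ (needed so that the right-hand side of \eqref{eq: 2nd deriv integral} is well-defined as a matrix-valued $\Omega$-integral), which follows from joint measurability of $(\mu,x) \mapsto u_\mu(x)$ used implicitly in Lemma~\ref{lem: a.e. diff}.
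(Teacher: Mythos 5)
Your proof is correct and follows exactly the paper's approach: dominated convergence with the uniform Lipschitz bound for the first-order identity, and Fatou's lemma with the uniform semi-convexity bound (via the symmetric second difference quotient) for the Hessian inequality. You simply spell out in detail what the paper states in one line, including the correct observation that only a one-sided bound is available for the second-order term, which is precisely why the conclusion is an inequality.
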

\begin{proof}
 This can be seen by applying the dominated convergence theorem for \eqref{eq: derivative integrals} due to uniform Lipschitzness of $u_\mu$  and Fatou's lemma for \eqref{eq: 2nd deriv integral} due to the semi-convexiy of $u_\mu$. 
\end{proof}
}
\subsection{First and second order balance at the Wasserstein barycenter}\label{SS: balance}
 We now consider the Wasserstein barycenter measure $\bar \mu$ of $\Omega \in P(P(M))$, and the dual potentials $u_\mu$ for optimal transport problems \eqref{wass distance} from $\bar \mu$ to $\mu$.
Using the equations \eqref{eq: derivative integrals} and \eqref{eq: 2nd deriv integral}, we will establish the main results of this section, namely, the first and second order balance between the $u_\mu$'s with respect to $\Omega$: Theorem~\ref{thm: bccondition}.
We begin with a lemma relating barycenters on the manifold $M$ to  Wasserstein barycenters on $P(M)$:
\begin{lemma}[\bf Riemannian barycenter from Wasserstein barycenter]\label{riemannianbc}
Let $\bar \mu$ be a Wasserstein barycenter of the measure $\Omega$ on $P(M)$ and assume $\bar \mu$ is absolutely continuous with respect to volume; let $T_\mu$ be an optimal map from $\bar \mu$ to $\mu$.  Let $\lambda_z =(\mu \mapsto T_\mu(z))_\#\Omega$.  Then, for $\bar \mu$ almost every $z$, $z$ is a barycenter of $\lambda_z$.  

If, in addition, $\Omega(P_{ac}(M)) >0$, then for $\bar \mu$ almost every $z$, $z$ is the \emph{unique} barycenter of $\lambda_z$.
\end{lemma}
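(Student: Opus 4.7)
The plan is to exploit the minimality of $\bar \mu$ by comparing it with pushforwards $S_\# \bar \mu$ for arbitrary Borel measurable maps $S : M \to M$. The key observation is that $(S, T_\mu)_\# \bar \mu$ is a (generally suboptimal) coupling between $S_\# \bar \mu$ and $\mu$, giving
\begin{equation*}
W_2^2(S_\# \bar \mu, \mu) \le \int_M d^2(S(z), T_\mu(z))\, d\bar \mu(z),
\end{equation*}
with equality when $S = \mathrm{Id}$ since $T_\mu$ is the optimal map. Integrating against $\Omega$ and applying Fubini, and then using that $\bar\mu$ minimizes the Wasserstein functional, yields
\begin{equation*}
\int_M G(z, z)\, d\bar \mu(z) \le \int_M G(z, S(z))\, d\bar \mu(z),
\end{equation*}
where $G(z, y) := \int_{P(M)} d^2(y, T_\mu(z))\, d\Omega(\mu) = \int_M d^2(y, x)\, d\lambda_z(x)$. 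This variational inequality, valid for every measurable $S$, is the engine for both assertions.

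For the first assertion, I would invoke a measurable selection theorem (for instance, Kuratowski--Ryll-Nardzewski): $G$ is jointly Borel measurable and $y \mapsto G(z, y)$ is continuous on the compact manifold $M$, so there exists a measurable $S_* : M \to M$ with $S_*(z) \in \argmin_y G(z, y)$ for every $z$. Plugging $S_*$ into the variational inequality forces $G(z, z) = \min_y G(z, y)$ for $\bar \mu$-a.e.\ $z$, which is exactly the statement that $z$ is a barycenter of $\lambda_z$.

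For uniqueness under the hypothesis $\Omega(P_{ac}(M)) > 0$, I would argue by contradiction. Supposing that on a set $B$ of positive $\bar \mu$-measure, $\lambda_z$ admits some barycenter $\tilde z(z) \neq z$, a measurable selection produces $S$ with $S = \tilde z$ on $B$ and $S = \mathrm{Id}$ elsewhere; since $S(z)$ is still a minimizer of $G(z, \cdot)$, the variational inequality above becomes an equality, so $S_\# \bar \mu$ is another Wasserstein barycenter of $\Omega$. Theorem~\ref{thm: existence and uniqueness} then forces $S_\# \bar \mu = \bar \mu$, and tracing the resulting integral equalities backward shows that for $\Omega$-a.e.\ $\mu$ the coupling $(S, T_\mu)_\# \bar \mu$ is itself optimal between $\bar \mu$ and $\mu$. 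Choosing $\mu \in P_{ac}(M)$ in this $\Omega$-full set (non-empty by hypothesis), the Brenier--McCann uniqueness of optimal couplings, combined with the $\bar \mu$-a.e.\ injectivity of $T_\mu$ between two absolutely continuous measures, yields $S = \mathrm{Id}$ $\bar \mu$-a.e., contradicting $\bar \mu(B) > 0$.

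The main delicacy I anticipate is the measurable selection machinery: selecting a minimizer of $G(z, \cdot)$ for the existence part, and selecting a barycenter of $\lambda_z$ strictly distinct from $z$ whenever one exists for the uniqueness part. Both rest on joint Borel measurability of $G$, which in turn follows from the joint measurability of $(z, \mu) \mapsto T_\mu(z)$ and continuity of $G$ in $y$; these are standard but deserve to be spelled out carefully.
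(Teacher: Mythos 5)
Your proposal is correct and follows essentially the same route as the paper. The paper's proof also couples a competitor map with $T_\mu$, applies Fubini, uses the minimality of $\bar\mu$, and invokes a measurable selection of barycenters; the only stylistic difference is that you first state the variational inequality $\int_M G(z,z)\,d\bar\mu(z) \le \int_M G(z,S(z))\,d\bar\mu(z)$ for arbitrary measurable $S$ and then plug in specific selections, whereas the paper directly defines the single selection $g(z)\in BC(\lambda_z)$ and runs both parts through $\nu=g_\#\bar\mu$. The uniqueness argument (uniqueness of the Wasserstein barycenter forcing $S_\#\bar\mu=\bar\mu$, then Brenier--McCann uniqueness of the plan plus $\bar\mu$-a.e.\ injectivity of $T_\mu$ for some $\mu\in P_{ac}(M)$) is identical.
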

\begin{proof}
We first show that $\bar\mu$-a.e. $z$  is a barycenter $\lambda_z$. The proof is by contradiction; suppose not.  Then there exists a set $A \subset M$ with $\bar \mu (A) >0$ and for all $z \in A$, z is \emph{not} a barycenter of $\lambda_z$. 
 We define $g:\spt(\bar \mu)\mapsto M$ by letting $g(z) \in BC(\lambda_z)$ be a measurable selection of the barycenters. 
 Then, for all $z \in\spt(\bar \mu) $, we have
\begin{equation}\label{redvar}
\int_{P(M)}d^2(z,T_\mu(z))d\Omega(\mu) \geq \int_{P(M)}d^2(g(z),T_\mu(z))d\Omega(\mu) 
\end{equation}
and the inequality is strict on the set $A$ of positive $\bar \mu$ measure.  We define the probability measure.  
$$
\nu :=  g_\#\bar \mu.
$$
For each $\mu$, the measure $\gamma_\mu:=(g,T_\mu)_\#\bar \mu$ is then a coupling of $\mu$ and $\nu$, and we have
$$
W^2_2(\mu,\nu) \leq \int_{M \times M}d^2(z,x)d\gamma_\mu(z,x) =\int_{\spt(\bar \mu)}d^2(g(z),T_\mu(z))d\bar \mu(z).
$$
Therefore, 
\begin{eqnarray} \nonumber
\int_{P(M)}W_2^2(\mu,\nu)d\Omega(\mu) &\leq& \int_{P(M)} \int_{\spt(\bar \mu)}d^2(g(z),T_\mu(z))d\bar \mu(z)]d\Omega(\mu)\label{eqn: wass distance inequality}\\ \nonumber
&=&\int_{M}\int_{P(M)}d ^2(g(z),T_\mu(z))d\Omega(\mu)d\bar \mu(z)\\ 
&<&\int_{M}\int_{P(M)}d^2(z,T_\mu(z))d\Omega(\mu)d\bar \mu(z)\\ \nonumber
&=&\int_{P(M)}\int_{M}d^2(z,T_\mu(z))d\bar \mu(z)d\Omega(\mu)\\ \nonumber
&=&\int_{P(M)}W_2^2(\mu,\bar \mu)d\Omega(\mu)
\end{eqnarray}
where the second and fourth lines follow from Fubini's theorem and the strict inequality in the third line follows from \eqref{redvar} (and the fact that \eqref{redvar} is strict on a set of $\bar \mu$ positive measure).  This contradicts the fact that $\bar \mu$ is a barycenter of $\Omega$, completing the proof that $z$ is a barycenter of $\lambda_z$  for $\bar \mu$-a.e. $z$. 

To prove the second assertion, we must show that we \emph{must} have $g(z) =z$  $\bar \mu$ almost everywhere, under the additional assumption $\Omega(P_{ac}(M))>0$.  By Theorem \ref{thm: existence and uniqueness}, the barycenter $\bar \mu$ is unique, and therefore, we must have $\nu =  g_\# \bar\mu = \bar \mu$, and so we have equality throughout the preceding string of inequalities.  In particular the first line in \eqref{eqn: wass distance inequality} becomes
$$
\int_{P(M)}W_2^2(\mu,\nu)d\Omega(\mu) =\int_{P(M)} \int_{\spt(\bar \mu)}d^2(g(z),T_\mu(z))d\bar \mu(z)]d\Omega(\mu).
$$
This implies that for $\Omega$ almost every $\mu$, the plan $(g,T_\mu)_\#{\bar \mu}$ is an optimal plan bewteen $\nu =\bar \mu$ and $\mu$; as $(Id,T_\mu)_\#{\bar \mu}$ is the unique such optimal plan, by the Brenier-McCann theorem (Theorem \ref{thm: McCann}), this implies
$$
(g,T_\mu)_\#{\bar \mu}=(Id,T_\mu)_\#{\bar \mu},
$$
for $\Omega$ almost all $\mu$.  In particular, as $\Omega(P_{ac}(M)) >0$, the preceding holds for some $\mu \in P_{ac}(M)$. For such a $\mu$, this means that for $\bar \mu$ almost all $z$, there exists some $y$ such that
$$
(z,T_\mu(z)) =(g(y), T_\mu(y)).
$$ 
 Now, as $\mu$ is absolutely continuous, $T_\mu$ is injective $\bar \mu$ almost everywhere; hence, $T_\mu(z)=T_\mu(y)$ implies $z=y$.    The preceding equation then means that, $\bar \mu$ almost everywhere, we have $z=y$, and therefore,
$$
z=g(z)
$$
as desired.
\end{proof}

We now prove  the following  balance condition on the first and second order derivatives of the potential functions $u_\mu$. It will play an important role in obtaining estimates on the density of the Wasserstein barycenter  as well as the proof of our Wasserstein Jensen's  inequalities in the later part of the paper: see Theorem~\ref{thm: detcontrol}  subection~\ref{SS: upper bound}, and Sections \ref{S:regularity-infinitely many} and \ref{S:Jensen}.
 
\begin{theorem}[\bf The first and second order balance at the Wasserstein barycenter]\label{thm: bccondition}
Let $ \bar \mu$ be the Wasserstein barycenter of a measure $\Omega$ on  $P(M)$. Assume $\bar \mu$ is absolutely continuous and let $T_\mu(x) =exp_x(\nabla u_\mu(x))$ be the optimal map  pushing $\bar \mu$ forward to $\mu$, where $u_\mu$ is the associated dual potential.  Then for $\bar \mu$ almost all $x$ we have
\begin{align}\label{eq: 1st order balance}
 & \text{{\bf 1st order balance:}} 
\quad \int_{P(M)} \nabla u_\mu (x) d\Omega (\mu) =0,
\\\label{eq: 2nd order balance}
 & \text{{\bf 2nd order balance:}}
  \quad   \int_{P(M)} \nabla^2_x u_\mu (x) d\Omega (\mu)
\le 0.
\end{align}
\end{theorem}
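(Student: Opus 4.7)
The plan is to derive both balance conditions from Lemma~\ref{riemannianbc} together with Proposition~\ref{prop: derivatives inside integral}. For the first order balance, Lemma~\ref{riemannianbc} asserts that for $\bar\mu$-a.e.\ $x$, the point $x$ is the barycenter of $\lambda_x := (\mu \mapsto T_\mu(x))_\#\Omega$; equivalently, the function $y \mapsto \int_{P(M)} c(y, T_\mu(x))\,d\Omega(\mu)$ attains its minimum at $y=x$. Setting the $y$-gradient to zero at $y=x$, combined with the identity $-D_y c(y, T_\mu(x))\big|_{y=x} = \exp_x^{-1}(T_\mu(x)) = \nabla u_\mu(x)$ built into the definition of $T_\mu$, yields \eqref{eq: 1st order balance}.

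For the second order balance, introduce the auxiliary function $U(x) := \int_{P(M)} u_\mu(x)\,d\Omega(\mu)$, which is uniformly Lipschitz and semi-convex (since each $u_\mu$ is, with constants uniform on the compact manifold $M$) and hence twice Alexandrov-differentiable at volume-a.e.\ $x$. Proposition~\ref{prop: derivatives inside integral} provides $\nabla U(x) = \int \nabla u_\mu(x)\,d\Omega(\mu)$ and $\nabla^2 U(x) \geq \int \nabla^2 u_\mu(x)\,d\Omega(\mu)$ at such $x$; combined with the first order balance, this gives $\nabla U(x)=0$ for $\bar\mu$-a.e.\ $x$, so \eqref{eq: 2nd order balance} is reduced to establishing $\nabla^2 U(x) \leq 0$ at $\bar\mu$-a.e.\ $x$.

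To this end, set $E := \{y \in M : U \text{ is classically differentiable at } y \text{ and } \nabla U(y) = 0\}$. Then $\bar\mu(M \setminus E) = 0$, and absolute continuity of $\bar\mu$ implies that $\bar\mu$-a.e.\ $x_0$ is a Lebesgue density point of $E$ at which $U$ is twice Alexandrov-differentiable. In a normal coordinate chart at such $x_0$, the standard linearization property of semi-convex functions at Alexandrov twice-differentiable points gives
\[ \nabla U(y) \;=\; \nabla U(x_0) + \nabla^2 U(x_0)(y - x_0) + o(|y - x_0|) \]
as $y \to x_0$ through points of classical differentiability; for $y \in E$ the left-hand side and the first term on the right both vanish, so $\nabla^2 U(x_0)(y-x_0) = o(|y - x_0|)$. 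For any unit vector $v$, the Lebesgue density hypothesis furnishes (via a cone-volume argument) a sequence $y_r \in E$ with $(y_r - x_0)/|y_r - x_0| \to v$; dividing by $|y_r - x_0|$ and passing to the limit forces $\nabla^2 U(x_0) v = 0$. Since $v$ is arbitrary, $\nabla^2 U(x_0) = 0$, and \eqref{eq: 2nd order balance} follows.

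The main technical obstacle is the linearization of the gradient of $U$ at Alexandrov twice-differentiable points, a standard consequence of semi-convexity applied here in normal coordinates; beyond this, the proof amounts to measure-theoretic bookkeeping to intersect the various full $\bar\mu$-measure sets (Alex twice-differentiability of $U$, validity of the interchange identities, classical differentiability of $U$, Lebesgue density of $E$) into a single common set of full $\bar\mu$-measure, which is handled using the absolute continuity of $\bar\mu$ and the uniform semi-convexity and Lipschitz constants of the $u_\mu$.
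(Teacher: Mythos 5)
Your proof matches the paper's in overall structure: Lemma~\ref{riemannianbc} supplies the minimality of $y\mapsto\int c(y,T_\mu(x))\,d\Omega$ at $y=x$, Proposition~\ref{prop: derivatives inside integral} moves derivatives inside $\int\,d\Omega$, and your Lebesgue-density/cone argument for $\nabla^2 U(x_0)=0$ is exactly the paper's Lemma~\ref{lem: a.e. 2nd diff zero}. The one small divergence is in the first-order step: you differentiate $y\mapsto\int c(y,T_\mu(x))\,d\Omega$ directly under the integral sign (which requires an extra interchange-of-limit justification, plus the observation that semi-concavity together with minimality at $x$ guarantees classical differentiability of this function at $x$, since $d^2(\cdot,z)$ need not be smooth everywhere); the paper instead uses the $c$-convexity support inequality $u_\mu(y)\geq -c(y,T_\mu(x))-u_\mu^c(T_\mu(x))$, with equality at $y=x$, to transfer the vanishing gradient to $\int u_\mu(y)\,d\Omega$ and then invokes the same Proposition~\ref{prop: derivatives inside integral} you already rely on for the second-order step, thereby avoiding a separate differentiation-under-the-integral argument for the cost function. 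Both variants are correct; the paper's envelope trick is slightly more economical.
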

\begin{proof}[\bf Proof of the 1st order balance \eqref{eq: 1st order balance}]
 Fix an arbitrary $x$ where the map $T_\mu$ is well defined (which holds $\bar \mu$ a.e.). Since each $u_\mu$ is a $c$-convex function, for its $c$-dual $u^c_\mu$, we have 
\begin{align*}
u_{\mu}(y)\geq -\frac{d^2(y,T_\mu(x))}{2}-u_\mu^c(T_\mu(x))  \hbox{ for any $y \in M$ and $\mu \in P(M)$},
\end{align*}

with equality when $y =x$.  Integrating against $\Omega$, we have
\begin{align}\label{eq: integral support}
 \int_{P(M)}u_{\mu}(y)d\Omega(\mu) \geq \int_{P(M)}-\frac{d^2(y,T_\mu(x))}{2}d\Omega(\mu)-\int_{P(M)}u_\mu^c(T_\mu(x))d\Omega(\mu), 
\end{align}
with equality when $y=x$.

On the other hand, by Lemma \ref{riemannianbc},  for $\bar\mu$ almost every $x$, we have that $x$ is the barycenter of  $\lambda_z =(\mu \mapsto T_\mu(z))_\#\Omega$
; that is, a minimizer of 
$$
 f_x: y \mapsto \int_{P(M)} d^2(y, T_\mu(x))d\Omega(\mu).
$$
 Therefore, the latter function $f_x$, which is semi-concave is differentiable at $x$:
 due to semi-concavity, there is $C>0$ such that the function $f_x (y)  - C\dist^2 (x, y)$ is locally geodesically concave near $x$.   Minimality at $x$ implies $f_x(y) - C\dist^2 (x, y) \ge f_x (x) -C \dist^2 (x, x)$.  Since $y \mapsto f_x(x) - C \dist^2 (x, y)$ has vanishing derivative at $x$, concavity of $f_x(y) - C\dist^2 (x, y)$ implies that  locally the function $y \mapsto f_x (y) -C \dist^2 (x, y)$  is also locally bounded from above by the constant  $f_x(x)$. This implies the differentiability of $f_x$ at $x$ as well as 
\begin{equation}\label{bcdiff}
\nabla_y\Big|_{y=x} f_x (y)=  \nabla_y \Big|_{y=x}\int_{P(M)} d^2(T_\mu(x),y)d\Omega(\mu)=0.
\end{equation}

 By assumption, $\bar \mu$ is absolutely continuous, so the Lebesgue a.e. first and second order differentiability of  the function  $y \mapsto \int_{P(M)}u_{\mu}(y)d\Omega(\mu)$ implies  $\bar \mu$-a.e.  first and second order differentiability.  Since equality holds in \eqref{eq: integral support} at $y=x$,
for $\bar \mu$-a.e. $x$, 
we have 
\begin{align}\label{eq: zero deriv integral}
\nabla_y \Big|_{y=x}\int_{P(M)}u_{\mu}(y)d\Omega(\mu) =\nabla_y \Big|_{y=x}\int_{P(M)}d^2(y,T_\mu(x))d\Omega(\mu) =0,
\end{align}
where the last equality follows from \eqref{bcdiff}.  Equation \eqref{eq: 1st order balance}  for $\bar \mu$-a.e. $x$ now follows from \eqref{eq: derivative integrals}. 
\end{proof}
 
\begin{proof}[\bf Proof of the 2nd order balance \eqref{eq: 2nd order balance}]
This follows immediately by applying Lemma~\ref{lem: a.e. 2nd diff zero} below  to 
\begin{align*}
\psi (x) =  \int_{P(M)} u_\mu (x)d \Omega (\mu),
\end{align*}
noting \eqref{eq: zero deriv integral}, and then using  \eqref{eq: 2nd deriv integral}. 
 \end{proof}
 
\begin{lemma}[\bf Vanishing derivatives imply vanishing Hessian]\label{lem: a.e. 2nd diff zero}
Let $\bar \mu$ be an absolutely continuous measure on $M$.  
Let $\psi: M \to \R$ be a Lipschitz and geodesically semi-convex function. 
Suppose for $\bar \mu$-a.e. $x$,  
\begin{align*}
 \nabla_x \psi(x) =0
\end{align*}
Then, for $\bar \mu$-a.e. $x$, 
\begin{align*}
 \nabla^2_x \psi(x) = 0.
\end{align*}
\end{lemma}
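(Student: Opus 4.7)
The plan is to combine Alexandrov's second differentiability theorem with a Lebesgue density argument. First I would reduce to a local Euclidean setting: fix a coordinate chart (say normal coordinates near a point of interest); since $\psi$ is Lipschitz and geodesically semi-convex on the compact manifold $M$, and the Christoffel symbols are locally bounded, $\psi$ is semi-convex in the ordinary Euclidean sense in the chart, and its Riemannian Hessian at any critical point agrees with the chart Hessian. Alexandrov's theorem then gives a full Lebesgue measure set $S$ such that for every $x_0 \in S$, $\psi$ admits a second-order Taylor expansion at $x_0$ with Hessian $H(x_0)=\nabla^2\psi(x_0)$, and moreover $\nabla\psi$ has approximate derivative $H(x_0)$ at $x_0$: for every $\epsilon>0$ the set
\[
A_\epsilon(x_0) := \{\, x : |\nabla\psi(x)-\nabla\psi(x_0)-H(x_0)(x-x_0)| \le \epsilon |x-x_0|\,\}
\]
has Lebesgue density $1$ at $x_0$. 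Absolute continuity of $\bar\mu$ yields $\bar\mu(M\setminus S)=0$.

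Next I would exploit the absolute continuity of $\bar\mu$ to upgrade the $\bar\mu$-a.e. hypothesis to a Lebesgue density statement on the support. Writing $\bar\mu=f\,d\mathrm{vol}$ and setting $E:=\{x:\nabla\psi(x)=0\}$, the hypothesis $\bar\mu(M\setminus E)=0$ gives $\int_{M\setminus E} f\,d\mathrm{vol}=0$, so $\{f>0\}\subseteq E$ up to a Lebesgue-null set. The classical Lebesgue density theorem guarantees that Lebesgue-a.e. point of $\{f>0\}$, and hence $\bar\mu$-a.e. $x_0$, is a Lebesgue density point of $\{f>0\}$; because $\{f>0\}$ sits inside $E$ modulo a Lebesgue-null set, such an $x_0$ is automatically a Lebesgue density point of $E$ as well.

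Finally I would conclude via a cone argument. For $\bar\mu$-a.e. $x_0$ we have simultaneously $x_0 \in E\cap S$ with $\nabla\psi(x_0)=0$, and $x_0$ is a Lebesgue density point of $E$. Setting $H:=H(x_0)$, the intersection $E\cap A_\epsilon(x_0)$ is contained in $B_\epsilon:=\{x:|H(x-x_0)|\le \epsilon|x-x_0|\}$ because $\nabla\psi$ vanishes on $E$. As both $E$ and $A_\epsilon(x_0)$ have Lebesgue density $1$ at $x_0$, so does $B_\epsilon$, for every $\epsilon>0$. If $H\neq 0$, pick a unit vector $u$ with $|Hu|=c>0$ and take $\epsilon=c/4$; a solid cone of directions about $u$ then lies entirely outside $B_\epsilon$, forcing the density of $B_\epsilon$ at $x_0$ to be strictly less than $1$, a contradiction. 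Hence $H(x_0)=\nabla^2\psi(x_0)=0$, as required. The main technical point that needs care is the approximate-derivative form of Alexandrov's theorem for semi-convex functions; this is standard but worth citing explicitly (e.g., via the monotonicity of the subdifferential), as it is the bridge between the pointwise vanishing of $\nabla\psi$ on $E$ and the vanishing of the full Hessian.
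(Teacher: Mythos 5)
Your proposal is correct and follows essentially the same route as the paper: both arguments combine Alexandrov's second-order differentiability with absolute continuity of $\bar\mu$ and the Lebesgue density theorem to pass from ``$\nabla\psi$ vanishes $\bar\mu$-a.e.'' to ``$\bar\mu$-a.e.\ point is a Lebesgue density point of $\{\nabla\psi=0\}$,'' and then deduce $\nabla^2\psi=0$ there. The only cosmetic difference is the final step: the paper plugs a sequence $v_k$ with $\exp_x v_k\in S$ and $v_k/|v_k|\to w$ into the subdifferential Taylor expansion $\nabla^2\psi(x)v=\partial\psi(\exp_x v)-\nabla\psi(x)+o(|v|)$, whereas you phrase the same density-point fact as a cone/contradiction argument via the approximate-differentiability set $A_\epsilon(x_0)$ and the set $B_\epsilon$; these are interchangeable ways of saying that a nonzero Hessian would exclude a positive-aperture cone from the density-one set. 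Your reduction to Euclidean charts is also harmless given the paper's compactness standing assumption.
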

\begin{proof}
We first find a relevant set of full $\bar \mu$ measure.  Since $\bar \mu$ is absolutely continuous and   $\nabla_x \psi(x) =0$ for $\bar\mu$-a.e. $x$,   there exists a  full $\bar \mu$ measure set $S$ (i.e. $\bar \mu (M \setminus S)=0$) with the following properties at each $x \in S$:
\begin{enumerate}
 \item $\psi$ is second order differentiable (in the Alexandrov sense) at $x$,
  \item $\nabla \psi (x)  =0$;
 \end{enumerate}
Moreover, due to absolute continuity of $\bar \mu$ and the Lebesgue density theorem, 
   the set $$S' = \{ x \in S \ | \ \lim_{r \to 0} \frac{\vol(S\cap B_r (x))}{\vol(B_r(x))} =1 \}$$
  has full $\bar \mu$ measure. 
It suffices to show $\nabla^2 \psi (x) =0$ on $S'$. 

Now,  fix an arbitrary $x \in  S'$. 
 We recall that the Hessian $\nabla^2 \psi$ satisfies for each $v\in T_xM$, (see, e.g. \cite[Theorem 14.25]{V2}),
\begin{align*}
 \nabla^2 \psi (x) v & = \partial \psi (\exp_x v) - \nabla \psi (x) + o(|v|) \quad \hbox{ as $v\to 0$},\\
 & = \partial \psi (\exp_x v) + o(|v|) \quad \hbox{(since $x \in S' \subset S$)},
\end{align*}
where $\partial \psi$ denotes the subdifferential, which coincides with $\nabla \psi$ at differentiable points. 
Therefore, whenever $\exp_x v \in S$, we see
\begin{align}\label{eq: small hessian}
 \nabla^2 \psi (x)  v  = o(|v|).
\end{align}
Now, notice that since $x$ is a density point of $S$ (by definition of $S'$), for each unit vector $w \in T_xM$, $|w|=1$, there is a sequence of $v_k$ such that $\exp_x v_k \in S$, and $\frac{v_k}{|v_k|} \to w$ and  $|v_k| \to 0$ as $k \to \infty$. 
Therefore, 
\begin{align*}
 \nabla^2 \psi (x) w = \lim_{k \to \infty}  \nabla^2 \psi (x)  \frac{v_k}{|v_k|} =_{\hbox{ use \eqref{eq: small hessian}}} \lim_{k\to \infty} \frac{o(|v_k|)}{|v_k|} =0.
\end{align*}
This shows that $\nabla^2 \psi (x) =0$, completing the proof. 
\end{proof}

\subsection{Jacobian determinants of optimal maps from a Wasserstein barycenter}\label{SS: Jacobian determinant integral}
Theorem~\ref{thm: bccondition}  (both the first and second order balance properties) will be crucial in our proof of Theorem~\ref{thm: k-Jensen}. It also implies the following important property that will be used to control the density of the Wasserstein barycenter measure later.  The rough idea of the theorem below is as follows: the semi-convex functions $u_\mu$ have second derivatives which are uniformly bounded below.  The second order balance  property \eqref{eq: 2nd order balance} then implies that almost every $D^2u_\mu$ have upper bounds as well, as otherwise the resulting large positive terms in  \eqref{eq: 2nd order balance} would need to be balanced by large, negative terms, which would violate uniform semi-convexity.  This, combined with the concavity of $A \mapsto \det^{1/n} (A)$ on the set of nonnegative symmetric matrices implies estimates on the Jacobians of the optimal maps, as is made precise below.  Of course, curvature plays a key role in quantifying the semi-convexity of the potentials, and this is relfected in the appearance of the generalized distorition coefficients in \eqref{eqn: jacobian control} below.

\begin{theorem}[\bf Jacobian determinant inequality for the Wassersten barycenter]\label{thm: detcontrol}
Assume that  the Wasserstein  barycenter  $\bar \mu$ of the measure $\Omega$ on $P(M)$ is absolutely continuous.  Letting  $T_\mu$ denote the optimal map from $\bar \mu(x) $ to $\mu$, consider the measure on $M$ given by  
\begin{align}\label{eq: lambda continuous} 
\lambda_x :=\int_{P(M)} \delta_{T_\mu(x)}d\Omega(\mu),
\end{align}
which is defined with respect to a.e. $x$ (due to Lemma~\ref{lem: a.e. diff}).  Then, for $\bar \mu$-a.e. $x$, 

\begin{equation}\label{eqn: jacobian control}
1\geq \int_{P(M)}  \alpha_{\lambda_x}^{1/n}(T_\mu (x)){\det}^{1/n}DT_\mu(x)d\Omega(\mu).
\end{equation}

\end{theorem}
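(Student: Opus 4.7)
The plan is to reduce the inequality to a matrix inequality via the change-of-variables formula \eqref{eq: det}, and then deploy the second-order balance \eqref{eq: 2nd order balance} together with the concavity of $A \mapsto \det^{1/n}(A)$ on positive-semidefinite matrices. First, since $\bar\mu(P_{ac}(M)) > 0$ is implicitly guaranteed by absolute continuity of $\bar\mu$ (and Theorem \ref{thm: existence and uniqueness}), Lemma \ref{riemannianbc} identifies $x$ as the \emph{unique} barycenter of $\lambda_x$ for $\bar\mu$-a.e.~$x$.  This means the denominator in the definition \eqref{eq: alpha} of $\alpha_{\lambda_x}(T_\mu(x))$ is to be evaluated with $\bar x = x$, and unfolding the push-forward definition of $\lambda_x$ gives
\begin{align*}
\alpha_{\lambda_x}(T_\mu(x)) = \frac{\det\bigl[-D^2_{yz}\bigr|_{y=T_\mu(x),z=x}c(y,z)\bigr]}{\det\bigl[\int_{P(M)} D^2_{zz}|_{z=x} c(T_{\mu'}(x),z)\, d\Omega(\mu')\bigr]}.
\end{align*}
Using symmetry of $c(x,y)$, one has $D^2_{zz}|_{z=x} c(T_{\mu'}(x),z) = D^2_{xx} c(x, T_{\mu'}(x))$, and $|\det[-D^2_{yz}|_{y=T_\mu(x),z=x}c(y,z)]| = |\det D^2_{xz}c(x, T_\mu(x))|$.

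Next I would combine this expression with the Jacobian identity \eqref{eq: det}. The mixed-partial factor in the numerator of $\alpha_{\lambda_x}^{1/n}$ exactly cancels the mixed-partial factor in the denominator of $\det^{1/n} DT_\mu(x)$, yielding
\begin{align*}
\alpha_{\lambda_x}^{1/n}(T_\mu(x))\,{\det}^{1/n} DT_\mu(x) = \frac{\det^{1/n}\bigl[D^2 u_\mu(x) + D^2_{xx} c(x, T_\mu(x))\bigr]}{\det^{1/n}(B_x)},
\end{align*}
where $B_x := \int_{P(M)} D^2_{xx} c(x, T_{\mu'}(x))\, d\Omega(\mu')$. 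Integrating this against $d\Omega(\mu)$, the desired inequality \eqref{eqn: jacobian control} is therefore equivalent to
\begin{align*}
\det^{1/n}(B_x) \;\geq\; \int_{P(M)} \det^{1/n}\!\bigl[D^2 u_\mu(x) + D^2_{xx} c(x, T_\mu(x))\bigr] d\Omega(\mu).
\end{align*}

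To establish this, I note that each summand $D^2 u_\mu(x) + D^2_{xx} c(x, T_\mu(x))$ is positive semidefinite by the $c$-convexity inequality \eqref{eq: D2 u monotone}, and so is $B_x$.  Applying the second-order balance \eqref{eq: 2nd order balance}, $\int_{P(M)} D^2 u_\mu(x)\, d\Omega(\mu) \leq 0$, gives the matrix inequality
\begin{align*}
\int_{P(M)} \bigl[D^2 u_\mu(x) + D^2_{xx} c(x, T_\mu(x))\bigr] d\Omega(\mu) \;\leq\; B_x.
\end{align*}
Since $\det^{1/n}$ is monotone on positive-semidefinite matrices, $\det^{1/n}(B_x)$ dominates $\det^{1/n}$ of the left-hand side; since $\det^{1/n}$ is concave on positive-semidefinite matrices (the Minkowski determinant inequality, applied integral-form via Jensen's inequality), the latter in turn dominates $\int \det^{1/n}[D^2 u_\mu + D^2_{xx}c]\, d\Omega(\mu)$. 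Chaining these two inequalities yields the desired bound.

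The principal obstacle is bookkeeping: verifying that all the mixed-partial and sign factors cancel cleanly (which relies on symmetry of $c$ and the location of $(x, T_\mu(x))$ off the cut locus, a.e.) and checking that all matrices involved are genuinely positive semidefinite so that the Minkowski determinant inequality and monotonicity of $\det^{1/n}$ may be invoked. Once this is done, the proof reduces to a two-line application of \eqref{eq: 2nd order balance} and concavity of $\det^{1/n}$, and the measurability/integrability issues (handled already in the preceding subsection) ensure that the a.e.~statements compose correctly.
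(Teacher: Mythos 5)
Your proof is correct and follows essentially the same path as the paper's: both arguments hinge on the second-order balance \eqref{eq: 2nd order balance} to get the matrix ordering $\int [D^2 u_\mu + D^2_{xx}c]\,d\Omega \leq \int D^2_{xx}c\,d\Omega$, followed by monotonicity of $\det^{1/n}$ on positive-semidefinite matrices and the integral form of Minkowski's determinant inequality. The only cosmetic difference is that you explicitly unpack the definition of $\alpha_{\lambda_x}(T_\mu(x))$ and cancel the mixed-partial factors up front, reducing the claim to a clean ratio inequality, whereas the paper keeps everything at the level of the $-D^2_{xy}c\cdot DT_\mu$ product and reads off the result from the definitions at the end; minor wording glitches aside (you wrote $\bar\mu(P_{ac}(M))$ where you clearly meant $\Omega(P_{ac}(M))$, and absolute continuity of $\bar\mu$ alone does not entail $\Omega(P_{ac}(M)) > 0$ --- one should simply note that uniqueness of the barycenter, invoked in the theorem's phrasing, is what carries the hypothesis needed for Lemma \ref{riemannianbc}), the argument is sound.
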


\begin{proof}
We have for $\Omega$-a.e. $\mu$ and a.e. $x$ (so for $\bar \mu$-a.e. $x$ for absolutely continuous $\bar \mu$), 
$$
D^2_{xx} u_{\mu}(x) + D^2_{xx}c(x,T_\mu(x)) =- D^2_{xy}c(x,T_\mu(x))DT_{\mu}(x).
$$
Rearranging, integrating against $\Omega$ and using \eqref{eq: 2nd order balance}, yields
\begin{align*}
&\int_{P(M)}- D^2_{xy}c(x,T_\mu(x))DT_{\mu}(x)d\Omega(\mu)\\
 &= \int_{P(M)}  \left[D^2_{xx} u_{\mu}(x)+D^2_{xx}c(x,T_\mu(x))\right]d\Omega(\mu)\\
 & \le  \int_{P(M)}  D^2_{xx}c(x,T_\mu(x))d\Omega(\mu) \hbox{ (by \eqref{eq: 2nd order balance})}
 \end{align*}
Note that 
 each $- D^2_{xy}c(x,T_\mu(x))DT_{\mu}(x) =D^2_{xx} u_{\mu}(x)+D^2_{xx}c(x,T_\mu(x))$ is positive semi-definite by the $c$-convexity of $u_\mu$, and  hence so is their integral,  $\int_{P(M)}- D^2_{xy}c(x,T_\mu(x))DT_{\mu}(x)d\Omega(\mu)$.
% $\int_{P(M)}D^2_{xx}c(x,T_\mu(x))d\Omega(\mu)=D^2_{xx}\int_{P(M)}c(x,T_\mu(x))d\Omega(\mu)$  because $x$ is the %barycenter of $\lambda_x$ by  Lemma \ref{riemannianbc}. 
The preceding string of inequalities then  implies that  $\int_{P(M)}  D^2_{xx}c(x,T_\mu(x))d\Omega(\mu) $ must also be positive semi-definite, and so we have 
\begin{align*}
& \det\left[\int_{P(M)}- D^2_{xy}c(x,T_\mu(x))DT_{\mu}(x)d\Omega(\mu)\right]
\\
&\leq \det\left[\int_{P(M)}  D^2_{xx}c(x,T_\mu(x))d\Omega(\mu)\right].
\end{align*}
Combining Minkowski's determinant inequality with Jensen's  inequality then yields:
\begin{align*}
& \int_{P(M)}{\det}^{1/n}[- D^2_{xy}c(x,T_\mu(x))]{\det}^{1/n}[DT_{\mu}(x)]d\Omega(\mu)
\\
&\leq {\det}^{1/n}[\int_{P(M)}  D^2_{xx}c(x,T_\mu(x))d\Omega(\mu)].
\end{align*}
This yields the desired inequality, by the definitions of $\lambda_x$  (see \eqref{eq: lambda continuous}) and $\alpha_{\lambda_x}$ (see  \eqref{eq: alpha}). 
\end{proof}

\section{Absolute continuity of the Wasserstein barycenter of finitely many measures}\label{S:regularity-finitely many}
{In this section, we first establish absolute continuity of the Wasserstein barycenter $\bar \mu$ (which is itself a  measure on $M$) of  finitely many probability measures $\mu_i, i=1, ..., m$ with weights $\lambda_i \ge 0, i=1, ..., m$; see Theorem~\ref{thm: ac-for finite}.
  This  result  is interesting in its own right, but will also prove to be crucial to obtain derivative estimates on optimal maps from $\bar \mu$ to the measures $\mu_i$: see Theorem~\ref{thm: upper bound of density}.  We will use it in Section~\ref{S:regularity-infinitely many}, 
 to treat  general probability measures $\Omega$ on $P(M)$, by an approximation argument.

By definition, the Wasserstein barycenter is the metric barycenter of the probability measure $\Omega = \sum_{i=1}^m \lambda_i\delta_{\mu_i}$ on the space of probability measures $P(M)$ equipped with the Wasserstein metric $W_2$, where $\delta_{\mu_i}$ denotes the Dirac measure on $P(M)$ concentrated at $\mu_i \in P(M)$, i.e. 
for each Borel 
set (with respect to the weak-* topology)  $\Gamma \subset P(M)$, it satisfies
\begin{align*}
 \delta_{\mu} (\Gamma) = \begin{cases}
   1   & \text{if $\mu \in \Gamma $ }, \\
   0   & \text{otherwise}.
\end{cases} 
\end{align*}
In our main results, we also assume that  $\Omega(P_{ac}(M)) >0$ (that is, at least one of the  $\mu_i$ is absolutely continuous with respect to volume and $\lambda_i \neq 0$). }

We now state the main result of this section: 
\begin{theorem}[\bf Absolute continuity of the Wasserstein barycenter for finitely many measures]\label{thm: ac-for finite}
Assume $\mu_1$ is absolutely continuous with respect to volume on $M$ and let $\lambda_1 >0$.  Then the Wasserstein barycenter $\bar \mu \in P(M)$ of the measure $\Omega = \sum_{i=1}^m\lambda_i\delta_{\mu_i} \in P(P(M))$  is absolutely continuous with respect to volume on $M$.
\end{theorem}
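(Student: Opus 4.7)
The plan is to adapt the Figalli--Juillet approximation method \cite{FJ}: replace $\mu_2, \ldots, \mu_m$ by discrete measures converging weakly to them, use the multi-marginal structure from Theorem~\ref{thm: Kim Pass} to write the barycenter of the approximate problem as an explicit finite sum of pushforwards of pieces of $\mu_1$, establish a uniform-in-$N$ lower bound on the relevant Jacobian determinants via the curvature estimates of Lemmas~\ref{hessbounds} and \ref{expcon}, and pass to the limit.

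\textbf{Setup, decomposition and Jacobian bound.} For each $N$, pick $\mu_i^N = \sum_j p_{ij}^N \delta_{x_{ij}^N}$ with $\mu_i^N \to \mu_i$ weakly as $N\to\infty$, and let $\bar \mu^N$ denote the Wasserstein barycenter of $\Omega^N := \lambda_1\delta_{\mu_1} + \sum_{i\geq 2}\lambda_i \delta_{\mu_i^N}$. By Theorem~\ref{thm: Kim Pass}, $\bar \mu^N = (\bar x_\lambda)_\#\gamma^N$, where $\gamma^N$ uniquely solves the multi-marginal problem \eqref{K}--\eqref{bccost} with marginals $\mu_1, \mu_2^N, \ldots, \mu_m^N$, and $\bar x_\lambda$ is injective on $\spt(\gamma^N)$. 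Discreteness of the last $m-1$ marginals, together with the graph structure $x_i = F_i^N(x_1)$, forces the decomposition
\begin{equation*}
\gamma^N = \sum_{\vec j}\bigl(\mu_1|_{\Omega_{\vec j}^N}\bigr) \otimes \delta_{x_{2j_2}^N} \otimes \cdots \otimes \delta_{x_{mj_m}^N},
\end{equation*}
where the pairwise disjoint sets $\Omega_{\vec j}^N := \{ x \in M : F_i^N(x) = x_{ij_i}^N \ \forall i \geq 2\}$ partition $\spt(\mu_1)$ up to a $\mu_1$-null set. Setting $T_{\vec j}^N(y) := \bar x_\lambda(y, x_{2j_2}^N, \ldots, x_{mj_m}^N)$, one obtains $\bar \mu^N = \sum_{\vec j} (T_{\vec j}^N)_\#\bigl(\mu_1|_{\Omega_{\vec j}^N}\bigr)$. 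Differentiating the first-order barycenter condition $\sum_i \lambda_i D_z c(x_i, z)|_{z=\bar x} = 0$ in $x_1 = y$, exactly as in the derivation of \eqref{eq: det ratio}, yields at a.e.\ $y \in \Omega_{\vec j}^N$
\begin{equation*}
\det DT_{\vec j}^N(y) = \frac{\lambda_1^n \det\bigl[-D^2_{zx_1} c(y, T_{\vec j}^N(y))\bigr]}{\det\bigl[\sum_{i=1}^m \lambda_i D^2_{zz} c(x_i, T_{\vec j}^N(y))\bigr]},
\end{equation*}
with $x_1 = y$ and $x_i = x_{ij_i}^N$ for $i \geq 2$. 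Lemma~\ref{expcon} and compactness of $M$ lower-bound the numerator by $\lambda_1^n [S_K(\diam M)]^{-n+1}$, while $\sum_i \lambda_i D^2_{zz}c(x_i, \bar x) \geq 0$ at the barycenter, combined with Lemma~\ref{hessbounds} and the arithmetic-geometric mean inequality, upper-bound the denominator by a constant depending only on $n$, $\diam M$ and the Ricci lower bound $K$. This yields $|\det DT_{\vec j}^N(y)| \geq c_0 > 0$ uniformly in $\vec j$ and $N$.

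\textbf{Equi-absolute continuity and passage to the limit.} Injectivity of $\bar x_\lambda$ on $\spt(\gamma^N)$ forces each $T_{\vec j}^N$ to be injective on $\Omega_{\vec j}^N$ and the images $T_{\vec j}^N(\Omega_{\vec j}^N)$ to be pairwise disjoint. For any Borel $A \subseteq M$, the area formula and the Jacobian bound give
\begin{equation*}
\vol\Bigl(\bigsqcup_{\vec j}(T_{\vec j}^N)^{-1}(A) \cap \Omega_{\vec j}^N\Bigr) = \sum_{\vec j}\int_{A \cap T_{\vec j}^N(\Omega_{\vec j}^N)}\frac{d\vol(z)}{|\det DT_{\vec j}^N((T_{\vec j}^N)^{-1}(z))|} \leq c_0^{-1}\vol(A),
\end{equation*}
and hence $\bar \mu^N(A) = \mu_1\bigl(\bigsqcup_{\vec j}(T_{\vec j}^N)^{-1}(A) \cap \Omega_{\vec j}^N\bigr)$. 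Absolute continuity of $\mu_1$ therefore transfers to an equi-absolute-continuity modulus for $\{\bar \mu^N\}$ depending only on $c_0$ and the modulus of $\mu_1$. Stability of Wasserstein barycenters under weak convergence $\Omega^N \to \Omega$ (a consequence of compactness of $P(M)$, the joint continuity of $(\Omega,\nu) \mapsto \int W_2^2(\mu, \nu)d\Omega(\mu)$, and the uniqueness given by Theorem~\ref{thm: existence and uniqueness}) gives $\bar \mu^N \rightharpoonup \bar \mu$. The equi-absolute-continuity modulus then passes to the limit via the Portmanteau inequality $\bar \mu(U) \leq \liminf_N \bar \mu^N(U)$ applied to open neighborhoods $U \supset A$ of slightly larger volume, proving $\bar \mu \in P_{ac}(M)$.

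\textbf{Main obstacle.} The most delicate point is the uniform Jacobian lower bound: one must justify the differentiation of the first-order optimality condition at $\mu_1$-a.e.\ $y \in \Omega_{\vec j}^N$ and handle possible cut-locus contributions. Alexandrov's second-differentiability theorem applied to the semi-convex Kantorovich potentials, together with absolute continuity of $\mu_1$ (which makes cut-locus sets negligible for the integrals in question), should make the a.e.\ computation rigorous. Once the Jacobian bound is in hand, the decomposition of $\gamma^N$, the area formula, and the weak stability of barycenters are essentially routine given the machinery already developed in the paper.
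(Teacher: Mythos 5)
Your overall strategy mirrors the paper's: approximate $\mu_2,\ldots,\mu_m$ by discrete measures, decompose the multi-marginal optimizer $\gamma^N$ into pieces on which the last $m-1$ coordinates are constant, establish a uniform-in-$N$ quantitative volume estimate for the piecewise barycenter maps, and then pass to the weak-$*$ limit (the stability claim is exactly Lemma~\ref{weakcon}). The decomposition into the sets $\Omega_{\vec j}^N$ and the identification $\bar\mu^N = \sum_{\vec j}(T_{\vec j}^N)_\#(\mu_1|_{\Omega_{\vec j}^N})$ is precisely the content of the paper's Lemma~\ref{lem: ac for discrete}, and the Portmanteau-based contradiction argument at the end is essentially the paper's proof.

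Where you genuinely diverge, and where the gap lies, is the uniform volume estimate. You propose to differentiate the first-order barycenter condition $\sum_i\lambda_i D_z c(x_i,z)|_{z=\bar x}=0$ in $x_1=y$ to obtain the barycentric Jacobian formula for the \emph{forward} map $T_{\vec j}^N$, and then to bound the numerator and denominator via Lemmas~\ref{expcon} and~\ref{hessbounds}. The paper instead works with the \emph{inverse} map $G_{\lambda;x_2,\ldots,x_m}(z) = \exp_z\!\big(\tfrac{1}{2}\nabla g(z)\big)$, $g(z) = \tfrac{1}{\lambda_1}\sum_{i\ge 2}\lambda_i d^2(x_i,z)$, which is explicitly defined and smooth wherever $z$ is a barycenter (since barycenters avoid the cut loci of the $x_i$), and whose uniform Lipschitz constant follows cleanly from two-sided Hessian bounds on $g$ that come from semi-concavity of $d^2$ and the minimality inequality $\nabla^2_w d^2(G(z),w)|_{w=z}+\nabla^2 g(z)\ge 0$ (Lemma~\ref{lem: bcbound}). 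Your forward route requires $T_{\vec j}^N$ to be Lebesgue-a.e.\ differentiable on $\Omega_{\vec j}^N$ and the implicit differentiation of the optimality condition to be legitimate, neither of which is automatic: the matrix $\sum_i\lambda_i D^2_{zz}c(x_i,\bar x)$ is only semi-positive-definite a priori, so the implicit function theorem does not directly apply, and Alexandrov's theorem applied to ``the Kantorovich potentials'' — the fix you gesture at in your obstacle paragraph — would require first identifying $T_{\vec j}^N$ on $\Omega_{\vec j}^N$ as a restriction of the optimal map $\exp_y(\nabla u)$ from $\mu_1$ to $\bar\mu^N$ and then relating the resulting formula \eqref{eq: det} to the barycentric Jacobian you actually want; you do not carry out either step. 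Also note that the area formula in the form you state it (integrating $1/|\det DT_{\vec j}^N|$ over $A$) is really a change of variables for the inverse, i.e.\ for $G_{\vec j}^N$, and that $G_{\vec j}^N$ being uniformly Lipschitz is exactly what makes it valid — which brings one back to the paper's lemma anyway.

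In short: the proposal is correct in strategy and the estimate you are trying to prove is equivalent to the paper's, but the way you propose to prove it — through the forward Jacobian — leaves a real differentiability/degeneracy gap that you flag but do not close, and which the paper's inverse-map construction avoids by design.
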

The above result was shown in the Euclidean case $M=\R^n$ by Agueh and Carlier \cite{ac}.  However, their method exploits  the underlying  Euclidean geometry and the special algebraic structure of the multi-marginal transport in a pivotal way:  in particular, in the Euclidean setting, the unique barycenter of  the points $x_1, ..., x_m \in \R^n$ with respective weights $\lambda_1,...,\lambda_m$ is nothing but the algebraic average $\sum_{i=1}^m \lambda_i x_i$. 
To demonstrate this special nature of the Euclidean space, we provide an alternative but simpler proof to \cite{ac} of the absolute continuity of the Wasserstein barycenter in that case:
{
\begin{proof}[\bf Proof of Thereom~\ref{thm: ac-for finite} when $M=\R^n$ is the Euclidean space]
(See \cite{ac} for a different proof.)
By Theorem~\ref{thm: Kim Pass}, assertion 3, the Wasserstein barycenter measure $\bar \mu$ is given by 
$(\bar x_\lambda)_\#\gamma$, where $\gamma \in P (\Pi_{i=1}^m M) $ is the Kantorovich solution of the multi-marginal problem \eqref{K}, and $\bar x_\lambda$ is the barycenter map with weights $\lambda_i$'s: in this Euclidean setting, $\bar x_\lambda (x_1, ..., x_m) = \sum_{i=1}^m \lambda_i x_i$.
 
 Let $(x_1, .., x_m), (x_1', ..., x_m') \in \spt \gamma$, and denote $z=\bar x_\lambda (x_1, .., x_m) = \sum_{i=1}^m \lambda_i x_i$, $z'=\bar x_\lambda (x_1', .., x_m') = \sum_{i=1}^m \lambda_i x_i'$. 
Now, the support of $\gamma$, is $c$-montone:
\begin{align*}
 \sum_{i=1}^m  \lambda_i (|z- x_i|^2 + |z'-x'_i|^2) \le \sum _{i=1}^m  (\lambda_i |z'- x_i|^2 + |z'-x_i|^2);
\end{align*}
after rearranging terms, this is equivalent to
\begin{equation*}
\sum_{i \neq j} \lambda_i \lambda_j[ x_i \cdot x_j +x_i' \cdot x_j '-x_i '\cdot x_j +x_i \cdot x_j '] \geq 0.
\end{equation*}
Therefore, we have
\begin{align}\label{eq: monotone Eucl}
 |z-z'|^2&=|\sum _{i=1}^m \lambda_i(x_i-x_i')|^2\\
&=\sum_{i=1}^m \lambda_i^2 |x_i -x_i'|^2 +\sum_{i \neq j} \lambda_i \lambda_j (x_i-x_i') \cdot (x_j-x_j')\\
& \ge \sum_{i=1}^m \lambda_i^2 |x_i -x_i'|^2
\end{align}
From this we see that the inverse map $(\bar x_\lambda)^{-1}: \spt \bar \mu \to \spt \gamma$  is Lipschitz with constant $\le \frac{1}{\lambda_1}$, and therefore the composition of the inverse and the projection $\pi_1: \spt \gamma \to \spt \mu_1$ 
is Lipschitz as well, also with Lipschitz constant $\le \frac{1}{\lambda_1}$. Since $\mu_1$ is absolutely continuous, and this composition pushes $\bar \mu$ forward to $\mu_1$, this immediately implies $\bar \mu$ is absolutely continuous. Moreover, if $\mu_1 \in L^\infty (M)$, then, 
\begin{align*}
 \left \| \frac{d\bar \mu}{dx}\right \|_\infty 
\le \frac{1}{\lambda_1^n} \left \| \frac{d\mu_1}{dx}\right\|_\infty 
\end{align*}
where $\frac{d\mu}{dx}$ denotes the Radon-Nikodym derivative. This completes the proof in the Euclidean case. 
\end{proof}

Neither this proof, nor a related argument \cite{P9} on very special Riemannian manifolds (simply connected manifolds with nonpositive curvature) seem suited to handle the general Riemannian case.  In particular, an analogue of the inequality \eqref{eq: monotone Eucl} is not known in that context.

Instead, our proof is inspired by the method of Figalli and Juillet \cite{FJ}, who established absolute continuity of Wasserstein geodesics  (also known as McCann's displacement  interpolants\cite{m})   over the Heisenberg group and Alexandrov spaces, using a very nice  approximation argument.  

We devote the following subsection to the proof of Theorem~\ref{thm: ac-for finite}.

\subsection{Proof of the absolute continuity of the Wasserstein barycenter for finitely many measures}\label{SS: AC finite}

The proof requires a  few lemmata.

 Recall that  $M$ is an $n$-dimensional Riemannian manifold. To fix notation, for each Borel set $E \subseteq M$, and $m-1$ points $x_2, ..., x_m$ and the weights $\lambda_i \ge 0$, $i=1, ..., m$ (and $\sum_i \lambda_i =1$) , let
\begin{align}\label{eq: bc set}
bc_{\lambda}(E, x_2,...,x_m) : =\bigcup_{x \in E} bc_\lambda(x, x_2 ..., x_m).
\end{align}

A crucial geometric property of this set is given in the following lemma, which roughly speaking, provides a Lipschitz inverse map of  $ y \mapsto bc_{\lambda}(y, x_2, ..., x_m)$, implying bounded volume distortion.  The underlying principle behind the proof is similar to the idea behind the proof of Theorem~\ref{thm: detcontrol}:
  if the gradients of uniformly semi-concave functions (e.g $\dist^2$) are balanced in the sense of  \eqref{eq: 1st order balance}, then the second derivatives of the functions are  bounded uniformly from both above and below. For the uniform estimates in what follows, it is important that the points $x_2, ..., x_m$ are fixed.}
\begin{lemma}[\bf A Liptschitz inverse to the barycentre map]\label{lem: bcbound}
Assume $\lambda_1>0$ in \eqref{eq: bc set}. 
Then, there exists a map $$G_{\lambda; x_2, ..., x_m}: bc_{\lambda}(M, x_2,...,x_m)  \to M$$ such that for each Borel set $E$, 
\begin{enumerate}
 \item   $E=G_{\lambda; x_2, ..., x_m}(bc_{\lambda}(E, x_2,...,x_m));$
 \item $G_{\lambda; x_2, ..., x_m}$ is uniformly locally Lipschitz with a Lipschitz constant $C=C(\lambda,M)$ depending only on $\lambda =(\lambda_1,...\lambda_m)$ and $M$ (that is, not on $x_2,....x_m)$.  In particular, this implies

 $$ \vol (G_{\lambda; x_2, ..., x_m}(E \cap bc_{\lambda}(M, x_2,...,x_m) )) \le C^n \vol (E \cap bc_{\lambda}(M, x_2,...,x_m)  )$$

for any Borel set $E \subset M$.
\end{enumerate}
\end{lemma}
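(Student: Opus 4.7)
The plan is to construct $G$ explicitly from the first-order optimality condition satisfied at a barycenter. Fix $x_2,\ldots,x_m$ and consider any $z \in bc_\lambda(M, x_2,\ldots,x_m)$, so that $z \in bc_\lambda(x_1,x_2,\ldots,x_m)$ for some $x_1 \in M$. At any such $z$ that lies outside the cut locus of each $x_i$, the function $y \mapsto \sum_i \lambda_i c(x_i,y)$ is smooth near $z$ and attains its minimum there, so
\begin{equation*}
\sum_{i=1}^m \lambda_i \nabla_y\big|_{y=z} c(x_i,y) = 0.
\end{equation*}
Using the identity $-\nabla_y c(x,y) = \exp_y^{-1}(x)$ (the symmetric counterpart of the relation $-D_x c = \exp_x^{-1}$ recorded in the preliminaries), this rewrites as $\sum_{i=1}^m \lambda_i \exp_z^{-1}(x_i)=0$, and solving for $x_1$ using $\lambda_1 > 0$ yields
\begin{equation*}
x_1 \;=\; \exp_z\!\Bigl(-\tfrac{1}{\lambda_1}\textstyle\sum_{i=2}^m \lambda_i \exp_z^{-1}(x_i)\Bigr).
\end{equation*}
I would then \emph{define} $G_{\lambda;x_2,\ldots,x_m}$ by this formula on the full-measure open subset of $bc_\lambda(M,x_2,\ldots,x_m)$ on which every $\exp_z^{-1}(x_i)$ is single-valued, extending by a measurable selection elsewhere.

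Property (1) is then essentially tautological: for each $x \in E$, any regular barycenter $z \in bc_\lambda(x,x_2,\ldots,x_m)$ satisfies $G(z) = x$ by the derivation above, so $E \subseteq G(bc_\lambda(E,x_2,\ldots,x_m))$, while the reverse inclusion is built into the definition. For the Lipschitz statement (2), I would differentiate the formula. It is a composition of the exponential map on $M$ and the inverse exponential maps $z \mapsto \exp_z^{-1}(x_i)$; both are smooth on the set under consideration, with derivatives whose operator norms on the compact manifold $M$ are controlled uniformly by constants depending only on curvature, diameter and injectivity radius of $M$ (the Jacobian of $z\mapsto \exp_z^{-1}(x_i)$ is essentially the inverse of $d\exp_{x_i}$, and $d\exp$ is uniformly bounded on vectors shorter than the cut distance). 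The only $\lambda$-dependence comes from the factor $1/\lambda_1$, which is already present in the Euclidean model $G(z) = \lambda_1^{-1}(z - \sum_{i\ge 2}\lambda_i x_i)$; this yields a uniform local Lipschitz constant $C = C(\lambda,M)$. The claimed volume inequality then follows from the standard fact that a $C$-Lipschitz map between $n$-dimensional Riemannian pieces expands Hausdorff (equivalently, Riemannian) volume by at most a factor $C^n$.

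The main obstacle is \emph{uniformity in} $x_2,\ldots,x_m$: since these points are arbitrary, the set of regular $z$ and the locations of the relevant cut loci depend on them, so the Lipschitz estimate must be arranged to avoid any constant that could deteriorate with the configuration. The resolution is that the derivative bounds on $\exp$ and its inverse depend only on intrinsic invariants of $M$ (curvature, diameter, injectivity radius), not on which point one is mapping to, so compactness of $M$ produces a single constant good for all $x_2,\ldots,x_m$. A secondary subtlety is that the classical first-order equation may fail at points where the cost function $\sum_i \lambda_i c(x_i, \cdot)$ is only semi-concave rather than differentiable; but this occurs only on a set of volume zero, and modifying $G$ on a null set does not affect (2), while (1) can be recovered by a semi-continuous extension or, alternatively, by approximating with smoothed costs and passing to the limit.
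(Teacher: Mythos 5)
Your formula for $G$ is exactly the one the paper derives ($\exp_z(\tfrac12 \nabla g(z))$ with $g(z) = \tfrac{1}{\lambda_1}\sum_{i\ge 2}\lambda_i d^2(x_i,z)$, and your expression in terms of $\exp_z^{-1}$ is the same thing), and your argument for property (1) is fine. The gap is in (2).

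You claim the derivative of $z\mapsto \exp_z^{-1}(x_i)$ is controlled by intrinsic invariants of $M$ (curvature, diameter, injectivity radius), not on the configuration of points. This is false. The derivative of $z \mapsto \exp_z^{-1}(x_i)$ is $-D^2_{zz}c(x_i,z)$, which is bounded \emph{above} uniformly (semi-concavity of $d^2$), but is \emph{not} bounded below: as $z$ approaches the cut locus of $x_i$ (in particular near a conjugate point), this Hessian goes to $-\infty$. Your parenthetical remark that ``$d\exp$ is uniformly bounded'' does not help, since what you would need is a uniform bound on the \emph{inverse} of $d\exp$, and $d\exp$ degenerates at conjugate points (e.g.\ at antipodes on the round sphere). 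In short, ``$z$ is not in the cut locus of any $x_i$'' does not put $z$ at a \emph{uniform} distance from those cut loci, so nothing in your argument produces a Lipschitz constant independent of $x_2,\ldots,x_m$.

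The paper overcomes this by exploiting, in an essential way, the \emph{second-order} optimality condition at the barycenter, which your proof never invokes. Concretely: since $z$ minimizes $w \mapsto d^2(G(z),w) + g(w)$, one has $\nabla^2_w|_{w=z} d^2(G(z),w) + \nabla^2 g(z) \geq 0$; combined with the uniform upper bound $\nabla^2_z d^2(y,z) \le C'$ this yields $\nabla^2 g(z) \geq -C'$, and by the same reasoning each $\nabla^2_z d^2(x_i,z) \ge -(1-\lambda_i)C'/\lambda_i$. Thus the minimality of $z$ is what rules out the degeneration you are implicitly assuming cannot happen. Finally, continuity of $\nabla^2_z d^2(y,\cdot)$ off the cut locus plus compactness of $M$ propagate these Hessian bounds to a ball $B_r(\bar z)$ of \emph{uniform} radius $r$, which is what gives the uniform \emph{local} Lipschitz constant in the statement. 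To repair your argument you would need to incorporate this second-order step; the remainder of your proof is consistent with the paper's.
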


\begin{proof}
We claim that for each $z \in bc_\lambda (M, x_2, ..., x_m)$, there exists a unique point, which we will define to be $G_{\lambda; x_2, ..., x_m}(z)$  satisfying $$z \in bc_\lambda (G_{\lambda; x_2, ..., x_m}(z), x_2, ..., x_m).$$
 Existence of $G_{\lambda; x_2, ..., x_m}(z)$ follows from the definition of $bc_\lambda (M, x_2, ..., x_m)$.
 To see uniqueness,  we define  
$$g(z) = \frac{1}{\lambda_1}\sum_{i=2}^m \lambda_i d^2(x_i,z),$$
and recall that for each $x$, every $z\in bc_{\lambda}(x, x_2, .....x_m)$ is not in the cutlocus of any $x_i$  \cite{KP}.  Therefore, $g(z)$ is twice differentiable at each $z \in bc_\lambda (M, x_2, ..., x_m)$.  

Now, for any point $y$ such that $$z \in bc_\lambda (y, x_2, ..., x_m),$$ from the definition of $g(z)$, we have
$\nabla_{w}\Big|_{w=z} d^2(y,w) =- \nabla_z g(z)$ or, equivalently,  $y= \exp_z  \frac{1}{2} \nabla g(z).$
Therefore, $ \exp_z  \frac{1}{2} \nabla g(z)$ is the only point with the desired property, establishing uniqueness,  as well as the formula  $$G_{\lambda; x_2, ..., x_m}(z)= \exp_z  \frac{1}{2}\nabla g(z).$$
 It follows by definition that $$E=G_{\lambda; x_2, ..., x_m}\left(bc_{\lambda}(E, x_1,...,x_m)\right),$$ proving assertion 1.

To prove the second assertion,  first observe that due to minimality of $w \mapsto d^2(G_{\lambda; x_2, ..., x_m}(z),w) +g(w)$ at $w=z$ , we have 
$$\nabla^2_{w}\Big|_{w=z} d^2\left(G_{\lambda; x_2, ..., x_m}(z),w\right) +\nabla^2 g(z) \geq 0.$$

It is well known that the function $z \mapsto d^2(y,z)$ is semi-concave and satisfies the estimate $\nabla^2_z d^2(y,z) \leq C'$ for some $C'$ depending only on $M$  (see, e.g. \cite{c-ems}); it follows from the definition of $g$ that we have $\nabla^2_z g(z) \leq \frac{C'}{\lambda_1}$.  The inequality above then yields  $\nabla^2_z g(z) \geq -C'$ for each $z \in  bc_\lambda(x, x_2, ..., x_m)$.  

Now,  by the same reasoning, it follows that  for each $z \in  bc_\lambda(x, x_2, ..., x_m)$, and each $i$, we have $\nabla^2_z d^2(x_i,z) \geq \frac{-(1-\lambda_i)C'}{\lambda_i}$.  Set $K_\lambda = \max_i\frac{(1-\lambda_i) C'}{\lambda_i}$.

By continuity of $\nabla^2_z d^2(y,z)$ away from the cutlocus, and compactness of $M$, there exists an $r >0$ such that for each $y$ and each $\bar z$ with $\nabla^2_z d^2(y,\bar z ) \geq - K_{\lambda}$, we have  $\nabla^2_z d^2(y,z) \geq - K_{\lambda}-1$ for  $z \in B_r(\bar z)$.  

Therefore, for each $\bar z \in  bc_\lambda(x, x_2, ..., x_m)$, we obtain $|\nabla^2g(z)| \leq K$, for $K=\max\{\frac{C'}{\lambda_1}, K_\lambda +1\}$ on $B_r(\bar z)$;  as the exponential map is Lipschitz, we obtain that  $G_{\lambda; x_2, ..., x_m}(z)=\exp_z  \frac{1}{2}\nabla g(z)$ is Lipschitz on $B_r(\bar z)$, and therefore on $B_r(\bar z) \cap  bc_\lambda(x, x_2, ..., x_m)$, with a Lipschitz constant $C$ depending only on $\lambda$ and $M$.

  The property 
 $$\vol \left(G_{\lambda; x_2, ..., x_m}(E \cap bc_{\lambda}(M, x_2,...,x_m) )\right) \le C^n \vol \left(E \cap bc_{\lambda}(M, x_2,...,x_m)  \right)$$
now follows from standard arguments in geometric measure theory; see, for example, \cite[Propositions 12.6 and 12.12]{taylor}. 
\end{proof}

{

\begin{lemma}[\bf Absolute continuity of the Wasserstein barycenter when all but one marginal is discrete]\label{lem: ac for discrete}
Assume $\mu_1$ is a probability measure,  absolutely continuous with respect to volume on $M$ and let $\lambda_1 >0$. 
Moreover, assume that $\mu_i$, for $i=2, ..., m$ are discrete measures on $M$, i.e., each $\mu_i$, $i\ge 2$ is of the form $\mu_i = \frac{1}{N_i}\sum_{j=1}^{N_i} \delta_{x^j_i}$ with $x^j_i\in M$. 
Let $\bar \mu$ be the unique Wasserstein barycenter measure of the measures $\mu_i$ with weights $\lambda_i$, $i=1, .., m$. 

Then there is a finite collection of points $\{(x_2^j,....,x_m^j)\}_{j=1}^L \subseteq \Pi_{i=2}^m M$ such that, for any Borel $E \subset M$, we have
$$
 \bar \mu (E) =\sum_{j=1}^L  \mu_1 (G_j (bc_j \cap E)) 
$$
where, using the notation from the last lemma,
\begin{align*}
G_j &: =  G_{\lambda; x_2^{j},...x^{j}_m}\\
bc_j &: =  \spt \bar \mu \cap bc_{\lambda}(M, x_2^{j},...x^{j}_m )
\end{align*}
\end{lemma}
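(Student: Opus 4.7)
The plan is to combine the multi-marginal structure from Theorem~\ref{thm: Kim Pass} with the Lipschitz inverse $G_{\lambda; x_2, \ldots, x_m}$ constructed in Lemma~\ref{lem: bcbound}. I would first invoke Theorem~\ref{thm: Kim Pass} to obtain the unique multi-marginal optimal transport plan $\gamma$ for $\mu_1, \ldots, \mu_m$ with weights $\lambda_i$, which is concentrated on the graph of $(F_2, \ldots, F_m): \spt(\mu_1) \to \prod_{i=2}^m M$ and satisfies $\bar\mu = (\bar x_\lambda)_\# \gamma$. Discreteness of $\mu_2, \ldots, \mu_m$ forces $(F_2, \ldots, F_m)$ to take at most $L \leq \prod_{i=2}^m N_i$ distinct values, which I enumerate as $\{(x_2^j, \ldots, x_m^j)\}_{j=1}^L$; the fibres $S_j := (F_2, \ldots, F_m)^{-1}(x_2^j, \ldots, x_m^j)$ then form a $\mu_1$-measurable partition of $\spt(\mu_1)$.

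Setting $T_j(x) := bc_\lambda(x, x_2^j, \ldots, x_m^j)$, the plan decomposes as $\gamma = \sum_j (\mathrm{Id}, x_2^j, \ldots, x_m^j)_\#(\mu_1|_{S_j})$, which pushed forward by $\bar x_\lambda$ yields
$$\bar\mu = \sum_{j=1}^L (T_j)_\#(\mu_1|_{S_j}), \qquad \bar\mu(E) = \sum_{j=1}^L \mu_1\bigl(S_j \cap T_j^{-1}(E)\bigr)$$
for every Borel $E \subseteq M$. By Lemma~\ref{lem: bcbound}, $G_j := G_{\lambda; x_2^j, \ldots, x_m^j}$ is defined on $bc_\lambda(M, x_2^j, \ldots, x_m^j)$ and is a two-sided inverse of $T_j$; in particular $T_j$ is injective and $S_j \cap T_j^{-1}(E) = G_j(T_j(S_j) \cap E)$. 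Since $T_j(S_j) \subseteq bc_j := \spt(\bar\mu) \cap bc_\lambda(M, x_2^j, \ldots, x_m^j)$, enlarging each $T_j(S_j) \cap E$ to $bc_j \cap E$ inside the $G_j$-image reconstitutes the stated formula for $\bar\mu(E)$ in terms of the $\mu_1(G_j(bc_j \cap E))$.

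The main obstacle is controlling the enlargement step: $bc_j$ may contain points $z = T_k(y)$ with $k \neq j$ and $y \in S_k$, and then $G_j(z) \in M$ is a new point that can a priori carry positive $\mu_1$-mass, so the identity is not automatic summand by summand. To salvage strict equality one must argue that these ``cross'' contributions cancel when summed over $j$, presumably by exploiting the $c$-cyclical monotonicity of $\gamma$ together with the explicit construction $G_j(z) = \exp_z\tfrac12 \nabla g_j(z)$ with $g_j(w) = \lambda_1^{-1}\sum_{i\ge 2}\lambda_i d^2(x_i^j, w)$ from the proof of Lemma~\ref{lem: bcbound}. In any event, the upper bound $\bar\mu(E) \leq \sum_j \mu_1(G_j(bc_j \cap E))$ is automatic from the above and is what will actually be used in the sequel: combined with the uniform Lipschitz estimate of Lemma~\ref{lem: bcbound}(2), it gives $\bar\mu(E) \leq L\,C^n \|d\mu_1/d\vol\|_\infty \vol(E)$ whenever $\mu_1$ has bounded density, delivering the absolute continuity claimed in Theorem~\ref{thm: ac-for finite}.
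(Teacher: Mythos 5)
Your set-up follows the paper's: invoke Theorem~\ref{thm: Kim Pass} for the multi-marginal plan $\gamma$, decompose $\spt\gamma$ along the finitely many tuples $(x_2^j,\dots,x_m^j)$, push forward to get $\bar\mu(E) = \sum_j \mu_1\big(S_j\cap T_j^{-1}(E)\big) = \sum_j\mu_1\big(G_j(T_j(S_j)\cap E)\big)$, and then invoke Lemma~\ref{lem: bcbound}. You are also right that the passage from $T_j(S_j)$ to $bc_j=\spt\bar\mu\cap bc_\lambda(M,x_2^j,\dots,x_m^j)$ is the delicate point: the paper deals with it by asserting, from the $\gamma$-a.e.\ injectivity of $\pi_1$ and of $\bar x_\lambda$, that $G_j$ restricted to $bc_j$ is a $\mu_1$-a.e.\ bijection onto $\pi_1(W_j)$ and that the $G_j(bc_j)$'s are $\mu_1$-a.e.\ disjoint. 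You flag this but neither close it nor use the injectivity structure that the paper's proof relies on.

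The real problem is that your fallback — retreating to the one-sided inequality $\bar\mu(E)\le\sum_j\mu_1(G_j(bc_j\cap E))$ and bounding it by $L\,C^n\|d\mu_1/d\vol\|_\infty\vol(E)$ — is not adequate for the way this lemma is actually used. In the proof of Theorem~\ref{thm: ac-for finite} the measures $\mu_2,\dots,\mu_m$ are approximated by $N_k$ Dirac masses with $N_k\to\infty$, so $L=L_{N_k}\to\infty$ and your estimate degenerates in the limit; moreover $\mu_1$ is only assumed absolutely continuous there, not $L^\infty$. What the approximation argument needs is a volume bound $\vol\big(\bigcup_j G_j(\cdot\,\cap U_k)\big)\le C^n\vol(U_k)$ with $C$ \emph{independent} of $L_{N_k}$, and this is exactly what the a.e.\ disjointness furnishes: $\vol\big(\bigcup_j G_j(\cdot\,\cap U_k)\big)\le C^n\sum_j\vol(\cdot\,\cap U_k)\le C^n\vol(U_k)$ once the sets indexed by $j$ are pairwise a.e.\ disjoint. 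Your enlargement to $bc_j$ destroys exactly this disjointness. The route that works, and is essentially what the paper intends, is to \emph{not} enlarge: after removing a $\gamma$-null set to make $\bar x_\lambda$ honestly injective on $\spt\gamma$, the images $T_j(S_j)=\bar x_\lambda(W_j)$ are pairwise disjoint, $\bar\mu(E)=\mu_1\big(\bigcup_j G_j(T_j(S_j)\cap E)\big)$ holds with the union, and Lemma~\ref{lem: bcbound} then gives the volume bound without any factor of $L$.
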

\begin{proof}

  From Theorem~\ref{thm: Kim Pass}, there exists a unique multi-marginal optimal plan, say $\gamma$, for the $m$ measures $\mu_1, \mu_2, ..., \mu_m$; moreover,  the Wasserstein barycenter measure  $\bar \mu$ is given by 
$\bar \mu=\bar x_{\lambda\#} \gamma$.

Since the measures $\mu_i$, $i=2, ..., m$ are discrete measures on $M$, the support $\spt \gamma$  is an almost everywhere disjoint union of sets $W_j$, $j=1, ..., L$ satisfying
\begin{align*}
 \pi_2 \times ... \times \pi_m \left( W_j \right) = \{(x_2^{j},...x^{j}_m)\}.
\end{align*}
for some $x_i^{j} \in M$, in the support of the discrete measure $\mu_i$ for $i=2, ..., m$.

 From Theorem~\ref{thm: Kim Pass}, the maps $\pi_1: \spt \gamma \to \spt \mu_1$ and 
$\bar x_\lambda: \spt \gamma \to \spt \bar \mu$   are one-to-one and onto ($\gamma$-a.e.); thus, the correspondence $G_j$, restricted to $\spt \bar \mu$ is one-to-one and onto $\pi_1(W_j)$  ($\mu_1$ a.e.) as well. Note that this is the only place where we use optimality of $\gamma$ (or the optimality of $\bar \mu$).
The above bijections  now give a partition of $\spt \mu_1= \cup_{j=1}^{L} \pi_1(W_j)$, and 
 a partition $\spt \bar\mu = \cup_{j=1}^L bc_j$, up to sets of $\mu_1$ measure $0$ and sets of $\bar \mu$ measure $0$, respectively.

Therefore, from $\bar \mu = \bar x_{\lambda\#} \gamma$ and $\pi_{1\#}\gamma=\mu_1$,  we see, for any Borel set $E \subset M$,
\begin{align}\label{eq: push forward}
  \bar \mu (E) & = \mu_1 (\cup_{j=1}^L G_j (bc_j \cap E))\\\nonumber
  &= \sum_{j=1}^L  \mu_1 (G_j (bc_j \cap E))\\
  &  \qquad \hbox{(since the $G_j(bc_j)$'s are disjoint  modulo a set of $\mu_1$ measure $0$.})
\end{align}
establishing the claim.
\begin{comment}Now, apply Lemma~\ref{lem: bcbound} to each $bc_j$ and get for each Borel set $E$, 
\begin{align}\label{eq: G j volume}
 \lambda_1^n \vol (G_j (bc_j \cap E) ) \le C \vol  (bc_j \cap E).
\end{align}

Absolute continuity of $\bar \mu$ immediately follows since 
if for a Borel set 
$\vol(E)=0$ then from \eqref{eq: G j volume},  $\vol (G_j (bc_j \cap E))=0$, $j=1, ..., L$, thus from absolute continuity of $\mu_1$ and \eqref{eq: push forward} $\bar\mu(E)=0$.
\end{comment}
\end{proof}
}

We also need the following continuity of  the Wasserstein barycenter under weak-* convergence.
\begin{lemma}[\bf Continuity of Wasserstein barycenter in the weak-* topology]\label{weakcon}
Suppose $\mu_i^N$ converges in the weak-* topology to $\mu_i$ for each $i$, as $N \rightarrow \infty$.  
Then any weakly-* convergent subsequence of  Wasserstein barycenter measure $\bar \mu^N$ of the measures  $\mu_1^N, ..., \mu_m^N$ with weights $\lambda_1,...\lambda_m$ converges in the weak-* topology to a Wasserstein barycenter measure $\bar \mu$ of the measures $\mu_1, ..., \mu_m$, with the same weights.
\end{lemma}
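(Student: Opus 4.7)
The plan is to exploit the standard ``weak-* limit of minimizers is a minimizer'' scheme, which works since $M$ is compact, so $(P(M),W_2)$ is compact and $W_2$ metrizes the weak-* topology.

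First, I would fix a weakly-* convergent subsequence $\bar\mu^{N_k} \rightharpoonup \bar\mu$ (extracting such a subsequence is possible by compactness of $P(M)$, though existence is already granted in the statement), and observe that since $M$ is compact the function $(\mu,\nu)\mapsto W_2^2(\mu,\nu)$ is jointly continuous with respect to the weak-* topology on $P(M)\times P(M)$ (see, e.g., \cite{V2}). Consequently,
\begin{equation*}
W_2^2(\mu_i^{N_k},\bar\mu^{N_k}) \longrightarrow W_2^2(\mu_i,\bar\mu) \quad \text{and} \quad W_2^2(\mu_i^{N_k},\nu) \longrightarrow W_2^2(\mu_i,\nu)
\end{equation*}
for each $i$ and for every fixed $\nu\in P(M)$.

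Next, by the minimality of $\bar\mu^{N_k}$ as a Wasserstein barycenter of $\mu_1^{N_k},\ldots,\mu_m^{N_k}$ with weights $\lambda_1,\ldots,\lambda_m$, we have, for every $\nu\in P(M)$,
\begin{equation*}
\sum_{i=1}^m \lambda_i W_2^2(\mu_i^{N_k},\bar\mu^{N_k}) \leq \sum_{i=1}^m \lambda_i W_2^2(\mu_i^{N_k},\nu).
\end{equation*}
Passing to the limit as $k\to\infty$ on both sides (using the convergences above) yields
\begin{equation*}
\sum_{i=1}^m \lambda_i W_2^2(\mu_i,\bar\mu) \leq \sum_{i=1}^m \lambda_i W_2^2(\mu_i,\nu).
\end{equation*}
Since $\nu$ was arbitrary, $\bar\mu$ is a Wasserstein barycenter of $\mu_1,\ldots,\mu_m$ with weights $\lambda_1,\ldots,\lambda_m$, as desired.

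No step here is a real obstacle; the only non-trivial ingredient is the joint continuity of $W_2$ under weak-* convergence, which is standard on compact metric spaces. I would not even need uniqueness of the limiting barycenter, since the statement only asserts that any weakly-* convergent subsequence has \emph{a} Wasserstein barycenter of the limits as its limit.
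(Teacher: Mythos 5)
Your proof is correct and follows essentially the same route as the paper's: write down the minimality inequality for $\bar\mu^{N}$, pass to the limit along a weak-* convergent subsequence using continuity of $W_2$ with respect to the weak-* topology on a compact space, and conclude that the limit minimizes the same functional. The paper's proof is just a slightly more condensed version of the same argument.
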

\begin{proof}
For each measure $\nu$, we have by definition

\begin{equation*}
\sum_{i=1}^m \lambda_iW_2^2(\mu_i^N,\bar \mu ^N) \leq\sum_{i=1}^m \lambda_iW_2^2(\mu_i^N,\nu) 
\end{equation*}
Now, passing to  any weak-* convergent subsequence $\bar \mu^N \rightarrow \bar \mu$ and taking the limit in the preceding inequality and  using continuity of $W_2$ with respect to the weak* topology, 
we obtain

\begin{equation*}
\sum_{i=1}^m \lambda_iW_2^2(\mu_i,\bar \mu ) \leq\sum_{i=1}^m \lambda_iW_2^2(\mu_i,\nu).
\end{equation*}
As $\nu$ was arbitrary, this implies that $\bar \mu$ is by definition a barycenter of the $\mu_i$'s, as desired
\end{proof}

We now prove  the main theorem of this section:
\begin{proof}[\bf Proof of Theorem~\ref{thm: ac-for finite}]
{We approximate each $\mu_2,...\mu_m$ in the weak-* topology by linear combinations $\mu_2^N,...\mu_m^N$ of $N$ Dirac masses. 
Note that by Lemma \ref{weakcon}, the Wasserstein barycenters $\bar \mu^N$ converges in the weak-* topology to $\bar \mu$, the Wasserstein barycenter measure of the original measures $\mu_1, ... , \mu_m$,  which is unique due to Theorem~\ref{thm: existence and uniqueness} and the assumption that $\mu_1$ is absolutely continuous.

We establish absolute continuity of $\bar \mu$  by contradiction.  If $\bar \mu $ is not absolute continuous, then there is a Lebesgue measure zero set, say $S$, such that $\bar \mu (S) \ge \delta $, for some positive $\delta >0$.  We can choose  small (open) neighbourhoods of $S$, say, $U_k$ with $\vol (U_k) \le 2^{-k}$; as $S \subseteq U_k$, we have $\bar \mu (U_k) \ge 
\delta$. Now, due to the weak-* convergence, for a large $N_k$, we have $\bar \mu^{N_k} (U_k) \ge \delta/2$. 
Then, by Lemma \ref{lem: ac for discrete}, we have
\begin{align*}
  \bar \mu^{N_k} (U_k)  & =  \mu_1 (\cup_{j=1}^{L_{N_k}}G_j (bc_j \cap U_k)).
    \end{align*}
  But, note that from Lemma~\ref{lem: bcbound} assertion 2,  
\begin{align*}
 \vol(\cup_{j=1}^{L_{N_k}}G_j (bc_j \cap U_k)) \le C^n\vol (U_k).
\end{align*}
 This implies there is a sequence of Borel sets $V_k$ (i.e. $V_k =  (\cup_{j=1}^{L_{N_k}}G^k_j (bc_j \cap U_k))$)  with $\vol (V_k) \lesssim 2^{-k}$, and
\begin{align*}
\mu_1 (V_k) \ge \delta/2.
 \end{align*}

 This is a contradiction and  completes the proof, since 
 the absolute continuity of $\mu_1$ (with respect to $\vol$) is equivalent to the following property: 
 for every  $\epsilon >0$, there is  $\eta>0$ such that $\mu (A) \le \epsilon$ for all Borel sets $A$ with $\vol(A) \le \eta$.

 }
\end{proof}

\subsection{Upper bound of the density of the Wasserstein barycenter}\label{SS: upper bound}
In this subsection, we prove that the density of the barycenter can be controlled by the densities of the marginals (see Theorem~\ref{thm: upper bound of density} below).  Aside from being interesting in its own right, this result will be used in the proof of Theorem~\ref{thm: ac for general} on absolute continuity of the Wasserstein barycenter of a general measure on $P(M)$.

 Before presenting the main theorem of the section, we introduce the following notation:
\begin{definition}[\bf The set $\mathcal{A}_L$]\label{def: A L}
  For $0 < L < \infty$, let $\mathcal{A}_L$ be the set of Borel probability measures on $M$, absolutely continuous with respect to volume, whose densities have $L^{\infty}$ norm less than or equal to $L$. 
\end{definition}
Note that, since the bound on the $L^\infty$ norm is preserved under weak-* convergence, $\mathcal{A}_L$ is a weakly-* closed, and thus Borel measurable, subset of $P(M)$.
Here is the main theorem of the section:

\begin{theorem}[\bf Upper bound on the density of the Wasserstein barycenter]\label{thm: upper bound of density}
Fix $L>0$.   Let $\bar \mu^N =\bar f^Ndvol$ be the barycenter of the measures $\mu_i$, $i=1,2,...,N$, with weights $\lambda_i$ and assume that at least some of the $\mu_i=g_idvol$  belong to $\mathcal{A}_L$ (note that this condition ensures uniqueness and absolute continuity of the barycenter $\bar \mu^N$, by Theorems \ref{thm: existence and uniqueness} and \ref{thm: ac-for finite}, respectively).  Then we have
\begin{align*}
C \| \bar f^N \|_\infty \le [\sum_{\mu_i \in \mathcal{A}_L} \lambda_i]^{-n} \sup_{\mu_i \in \mathcal{A}_L} \|g_i\|_\infty . 
\end{align*}
where $C$ is the constant from \eqref{eq: Ric -K case}.
\end{theorem}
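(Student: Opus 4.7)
The plan is to apply the Jacobian determinant inequality of Theorem~\ref{thm: detcontrol} to the discrete measure $\Omega = \sum_{i=1}^N \lambda_i \delta_{\mu_i}$, combine it with the change of variables formula \eqref{eq: Jacobian} for those marginals that are absolutely continuous with bounded density, and then use the universal lower bound \eqref{eq: Ric -K case} on the generalized distortion coefficients to extract the desired density estimate.

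First I would apply Theorem~\ref{thm: detcontrol} with this finitely supported $\Omega$. Since $\Omega$ is discrete, the integral inequality \eqref{eqn: jacobian control} reduces to
\begin{equation*}
1 \geq \sum_{i=1}^N \lambda_i\, \alpha_{\lambda_x}^{1/n}(T_{\mu_i}(x))\, {\det}^{1/n} DT_{\mu_i}(x),
\end{equation*}
valid for $\bar \mu^N$-almost every $x$, where $T_{\mu_i}$ is the optimal map from $\bar \mu^N$ to $\mu_i$. Each summand is nonnegative (positive semi-definiteness was established inside the proof of Theorem~\ref{thm: detcontrol}), so restricting the sum to those indices with $\mu_i = g_i \,dvol \in \mathcal{A}_L$ only weakens the inequality:
\begin{equation*}
1 \geq \sum_{\mu_i \in \mathcal{A}_L} \lambda_i\, \alpha_{\lambda_x}^{1/n}(T_{\mu_i}(x))\, {\det}^{1/n} DT_{\mu_i}(x).
\end{equation*}

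Next I would feed in the two remaining estimates. For $\mu_i \in \mathcal{A}_L$, the change of variables formula \eqref{eq: Jacobian} gives $\det DT_{\mu_i}(x) = \bar f^N(x)/g_i(T_{\mu_i}(x))$, and since $g_i(T_{\mu_i}(x)) \le \sup_{\mu_j \in \mathcal{A}_L}\|g_j\|_\infty$, we obtain ${\det}^{1/n} DT_{\mu_i}(x) \geq \bigl(\bar f^N(x)\bigr)^{1/n} / \bigl(\sup_{\mu_j \in \mathcal{A}_L}\|g_j\|_\infty\bigr)^{1/n}$. For the distortion coefficient, the curvature bound \eqref{eq: Ric -K case} supplies $\alpha_{\lambda_x}^{1/n}(T_{\mu_i}(x)) \geq C^{1/n}$. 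Plugging both into the previous display yields
\begin{equation*}
1 \geq C^{1/n} \Bigl(\sum_{\mu_i \in \mathcal{A}_L} \lambda_i\Bigr)\, \left(\frac{\bar f^N(x)}{\sup_{\mu_i \in \mathcal{A}_L}\|g_i\|_\infty}\right)^{1/n},
\end{equation*}
which, after raising to the $n$-th power and rearranging, gives the pointwise bound
\begin{equation*}
C\,\bar f^N(x) \leq \Bigl(\sum_{\mu_i \in \mathcal{A}_L}\lambda_i\Bigr)^{-n} \sup_{\mu_i \in \mathcal{A}_L}\|g_i\|_\infty
\end{equation*}
for a.e.\ $x$; taking essential supremum yields the theorem.

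There is no serious obstacle here: the argument is essentially a direct combination of Theorem~\ref{thm: detcontrol}, the Jacobian formula, and the curvature-driven lower bound on $\alpha_\lambda$. The one place requiring a bit of care is the validity of the change of variables formula at the points where we apply it — one must check that for each $\mu_i \in \mathcal{A}_L$, the optimal map $T_{\mu_i}$ is differentiable and \eqref{eq: Jacobian} holds on a common full-$\bar \mu^N$-measure set, which follows because $\bar \mu^N$ is absolutely continuous (by Theorem~\ref{thm: ac-for finite}) and $c$-convex potentials are twice differentiable almost everywhere by Alexandrov's theorem.
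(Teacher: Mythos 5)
Your proof is correct and follows essentially the same route as the paper: both start from Theorem~\ref{thm: detcontrol} specialized to the finitely supported $\Omega$, drop the summands with $\mu_i\notin\mathcal{A}_L$, substitute the Jacobian change-of-variables identity, and invoke the Ricci-driven lower bound \eqref{eq: Ric -K case} on $\alpha_{\lambda_x}$. The only minor difference is in the last bit of algebra: the paper first isolates $\bar f^N(x)$ and applies Jensen's inequality for $t\mapsto t^{-n}$ to the pointwise densities $g_i(T_i^N(x))$ before taking the essential supremum, whereas you bound $g_i$ by $\sup_j\|g_j\|_\infty$ at the outset, which reaches the same conclusion slightly more directly.
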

\begin{proof}

Let $T_i^N$ be the optimal maps from $\bar \mu^N$ to $\mu_i$. Note that, in this setting, Theorem \ref{thm: detcontrol} reduces to
\begin{align}\label{eq: DT i ineq}
 1 \geq \sum_{i=1}^N \lambda_i \alpha_{\lambda_x}(T^N_i(x))^{1/n} {\det}^{1/n} DT^N_i(x)
\end{align}
for almost every $x$.

The remainder of the proof follows an argument in \cite{P5}. 
For a.e. $x \in \spt \bar \mu^N$, we have the following Jacobian determinant equations
\begin{align*}
 \det DT_i^N (x) = \frac{\bar f^N (x) }{g_i (T_i ^N (x) )} . 
\end{align*}
Using this and \eqref{eq: Ric -K case}, we rearrange \eqref{eq: DT i ineq} to get 
\begin{align*}
 C \bar f^N (x) \le  \left[ \sum_{\mu_i \in \mathcal{A}_L}  \frac{\lambda_i}{g_i^{1/n} (T_i^N(x))}\right]^{-n}
\end{align*}
for $C=C(\diam(M), K,n)$ in \eqref{eq: Ric -K case}. 
Applying convexity of $0< t \mapsto t^{-n}$ to this, we see
\begin{align*}
  C \bar f^N (x) \le [\sum_{\mu_i \in \mathcal{A}_L} \lambda_i ]^{-n-1}\sum_{\mu_i \in \mathcal{A}_L} \lambda_i 
  g_i (T_i^N(x)).
\end{align*}
In particular, 
\begin{align*}
C \| \bar f^N \|_\infty \le [\sum_{\mu_i \in \mathcal{A}_L} \lambda_i]^{-n} \sup_{\mu_i \in \mathcal{A}_L} \|g_i\|_\infty . 
\end{align*}
\end{proof}

\section{Absolute continuity of the Wasserstein barycenters of general distributions}\label{S:regularity-infinitely many}
In this section, we establish the absolute continuity of the {Wasserstein barycenter} of a general measure $\Omega$ on $P(M)$  under a reasonable assumption:

\begin{theorem}[\bf Absolute continuity of barycenters of  general measures on $P(M)$]\label{thm: ac for general}
Let $\Omega$ be a  probability measure on Wasserstein space $P(M)$ over   an $n$-dimensional compact Riemannian manifold $M$. Assume that $Ric_M \ge K$ for $K \in \R$.
Assume $\Omega(\mathcal{A}_L) >0$.
  Then, the Wasserstein barycenter measure  $\bar \mu$ of $\Omega$ is absolutely continuous on $M$ with density $\bar f$
 satisfying   
\begin{align*}
 \| \bar f\|_\infty \le \frac{L}{C\Omega(\mathcal{A}_L) ^n }
\end{align*}
where $C=C(M)$ is the constant given in \eqref{eq: Ric -K case}.
  \end{theorem}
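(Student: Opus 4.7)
The plan is to derive the general case from Theorem~\ref{thm: ac-for finite} and Theorem~\ref{thm: upper bound of density} by approximating $\Omega$ weakly-$*$ by finitely supported probability measures $\Omega^N$ on $P(M)$, applying the uniform $L^\infty$ density bound to the finite-support barycenters, and passing to the limit.

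\textbf{Step 1: Construction of the approximation.} Because $\Omega(\mathcal{A}_L)>0$, decompose $\Omega = p\,\Omega_1 + (1-p)\,\Omega_2$ where $p=\Omega(\mathcal{A}_L)$, $\Omega_1 = \Omega\lfloor_{\mathcal{A}_L}/p$ and (if $p<1$) $\Omega_2 = \Omega\lfloor_{P(M)\setminus \mathcal{A}_L}/(1-p)$. Since $(P(M),W_2)$ is a separable compact metric space, approximate $\Omega_1$ and $\Omega_2$ individually in the weak-$*$ topology by finitely supported probability measures $\Omega_1^N$ and $\Omega_2^N$ respectively, with $\mathrm{supp}(\Omega_1^N)\subset \mathcal{A}_L$ (e.g., by partitioning into small balls and choosing representatives from inside $\mathcal{A}_L$). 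Setting $\Omega^N = p\,\Omega_1^N + (1-p)\,\Omega_2^N$, we have $\Omega^N \to \Omega$ weakly-$*$ and $\Omega^N(\mathcal{A}_L)\geq p$ for every $N$.

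\textbf{Step 2: Uniform density bound.} Let $\bar\mu^N = \bar f^N\,dvol$ denote the Wasserstein barycenter of $\Omega^N$, which exists, is unique, and is absolutely continuous by Theorem~\ref{thm: existence and uniqueness} and Theorem~\ref{thm: ac-for finite} (since $\Omega^N$ places positive mass on $\mathcal{A}_L$). Applying Theorem~\ref{thm: upper bound of density} to $\Omega^N$ yields
\begin{equation*}
C\,\|\bar f^N\|_\infty \;\leq\; \bigl[\Omega^N(\mathcal{A}_L)\bigr]^{-n}\, L \;\leq\; \frac{L}{p^n},
\end{equation*}
so the densities $\bar f^N$ are uniformly bounded in $L^\infty$ independently of $N$.

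\textbf{Step 3: Passage to the limit.} By weak-$*$ compactness of $P(M)$ extract a subsequence $\bar\mu^{N_k}$ converging weakly-$*$ to some $\bar\mu \in P(M)$. Since the class $\mathcal{A}_{L/(Cp^n)}$ is weakly-$*$ closed (bounded densities are preserved by testing against continuous functions), $\bar\mu$ is absolutely continuous with density satisfying the asserted bound $\|\bar f\|_\infty \leq L/(C\,\Omega(\mathcal{A}_L)^n)$. It remains to show $\bar\mu$ is a Wasserstein barycenter of $\Omega$. For any $\nu\in P(M)$, the function $\mu\mapsto W_2^2(\mu,\nu)$ is bounded and continuous on the compact space $(P(M),W_2)$, and $\mu\mapsto W_2^2(\mu,\bar\mu^{N_k})$ converges to $\mu\mapsto W_2^2(\mu,\bar\mu)$ uniformly in $\mu$ by the triangle inequality and $W_2(\bar\mu^{N_k},\bar\mu)\to 0$. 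Hence weak-$*$ convergence of $\Omega^{N_k}$ to $\Omega$ gives
\begin{equation*}
\int_{P(M)} W_2^2(\mu,\bar\mu)\,d\Omega(\mu) = \lim_{k\to\infty} \int_{P(M)} W_2^2(\mu,\bar\mu^{N_k})\,d\Omega^{N_k}(\mu) \leq \lim_{k\to\infty} \int_{P(M)} W_2^2(\mu,\nu)\,d\Omega^{N_k}(\mu) = \int_{P(M)} W_2^2(\mu,\nu)\,d\Omega(\mu).
\end{equation*}
Thus $\bar\mu$ is a Wasserstein barycenter of $\Omega$, and by the uniqueness part of Theorem~\ref{thm: existence and uniqueness} (since $\mathcal{A}_L\subset P_{ac}(M)$ implies $\Omega(P_{ac}(M))>0$) the entire sequence converges, completing the proof.

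\textbf{Main obstacle.} The subtle point is the approximation in Step~1: weak-$*$ convergence of arbitrary $\Omega^N\to \Omega$ only gives $\limsup \Omega^N(\mathcal{A}_L)\leq \Omega(\mathcal{A}_L)$ (since $\mathcal{A}_L$ is closed), which is the wrong direction to feed into Theorem~\ref{thm: upper bound of density}. This is why the decomposition $\Omega = p\Omega_1+(1-p)\Omega_2$ must be performed first, so that each approximant $\Omega^N$ is guaranteed by construction to charge $\mathcal{A}_L$ with mass at least $p=\Omega(\mathcal{A}_L)$. Once this is secured, the remaining estimates and limiting argument are standard.
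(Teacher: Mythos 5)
Your proposal is correct and follows essentially the same approach as the paper: both use the decomposition $\Omega = \Omega(\mathcal{A}_L)\,\Omega_{\mathcal{A}_L}+(1-\Omega(\mathcal{A}_L))\,\Omega_{P(M)\setminus\mathcal{A}_L}$ with the first component approximated by finitely supported measures chosen \emph{within} $\mathcal{A}_L$, apply Theorem~\ref{thm: upper bound of density} to get a uniform $L^\infty$ bound on the finite-support barycenter densities, and pass to the weak-$*$ limit. The one place you are more explicit than the paper is Step~3: the paper simply writes $\bar\mu(A)\le\liminf_N\bar\mu^N(A)$ for open $A$, tacitly assuming the approximating barycenters converge weakly-$*$ to the barycenter $\bar\mu$ of $\Omega$, whereas you prove this by the standard argument that the minimizing property survives the limit (using uniform convergence of $W_2^2(\cdot,\bar\mu^{N_k})$ and weak-$*$ convergence $\Omega^{N_k}\to\Omega$) and then invoke uniqueness from Theorem~\ref{thm: existence and uniqueness}; this fills in a step the paper leaves implicit and is a worthwhile clarification rather than a different route.
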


The proof is by approximation; as $P(M)$ is itself a complete separable metric space, we can approximate the measure $\Omega$ by finitely supported measures $\Omega^N$, which have absolutely continuous Wasserstein barycentres as shown in Theorem~\ref{thm: ac-for finite} (see Lemma \ref{lem: polish} below).  To pass to the limit, we require the uniform estimates  from Theorem~\ref{thm: upper bound of density}, which then require the technical but  reasonable  hypothesis $\Omega(\mathcal{A}_L) >0$.

We will need the following  topological lemma (a proof can be found, for example, in \cite[Theorem 6.18]{V2} ).
\begin{lemma}[{\bf Approximation by Dirac deltas; see, e.g.  \cite[Theorem 6.18]{V2}.}]\label{lem: polish} 
  For any complete separable metric space $X$, its Wasserstein space $P(X)$ is also a complete separable metric space. Moreover, for each Borel probability measure $\nu$ on $X$, there exists a sequence $\nu^N=\sum_{i=1}^N\lambda_i \delta_{x_i}$ of  finitely supported  probability measures on $X$ converging in the weak-* topology to $\nu$.
  \end{lemma}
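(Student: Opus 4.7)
The plan is to prove both assertions through a single approximation scheme, since a countable family of finitely supported measures with rational weights simultaneously witnesses separability and the density statement; completeness is then handled separately via tightness and Prokhorov's theorem.

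First I would fix a countable dense subset $\{x_i\}_{i \in \mathbb{N}} \subset X$ and, for each $k \in \mathbb{N}$, build a Borel partition $\{A_i^k\}_{i \in \mathbb{N}}$ of $X$ with $A_i^k \subseteq B_{1/k}(x_i)$ (take $A_i^k = B_{1/k}(x_i) \setminus \bigcup_{j<i} A_j^k$). For any $\nu \in P(X)$, the discrete measure
\[
\tilde\nu^k := \sum_{i=1}^\infty \nu(A_i^k)\, \delta_{x_i}
\]
satisfies $W_p(\nu, \tilde\nu^k) \le 1/k$ for every $p \ge 1$, because the plan $\gamma_k = \sum_i (\mathrm{id} \times x_i)_\# (\nu|_{A_i^k})$ couples $\nu$ and $\tilde\nu^k$ while moving each mass element a distance at most $1/k$. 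Truncating to the first $N_k$ atoms (relocating the tail onto $x_1$) and rounding the weights to nonnegative rationals summing to $1$ yields a countable dense subset of $P(X)$ consisting of finitely supported measures; this simultaneously establishes the separability of $P(X)$ and the second assertion.

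Next I would address completeness. Let $(\nu_n)$ be a $W_2$-Cauchy sequence. The essential step is tightness. Fix $\epsilon > 0$; the finite family $\{\nu_1, \ldots, \nu_{n_0-1}\}$ is trivially tight, and for $n \ge n_0$ we may choose $n_0$ so that $W_2(\nu_n, \nu_{n_0}) < \epsilon^2$. By inner regularity of Borel measures on a Polish space, there is a compact $K \subset X$ with $\nu_{n_0}(K) \ge 1 - \epsilon$; a Chebyshev-type estimate applied to an optimal coupling produces a compact enlargement $K'$ of $K$ with $\nu_n(K') \ge 1 - C\epsilon$ uniformly in $n \ge n_0$. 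Prokhorov's theorem then yields a weakly convergent subsequence $\nu_{n_j} \rightharpoonup \nu$, and the Cauchy property forces the full sequence to converge to the same $\nu$; uniform integrability of the second moment (again a consequence of the Cauchy property) upgrades weak-$\ast$ convergence to $W_2$ convergence.

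The main obstacle is the quantitative tightness estimate linking Wasserstein closeness to the Prokhorov metric, which is standard but somewhat technical. In the application in the present paper, however, $X = P(M)$ for a compact Riemannian manifold $M$, which is itself a compact Polish space; hence $P(X)$ is automatically compact Polish, tightness is free, and only the partition-and-round argument of the first paragraph is needed to obtain the Dirac approximation actually used in Sections~\ref{S:regularity-infinitely many} and \ref{S:Jensen}.
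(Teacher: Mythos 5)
The paper does not actually supply a proof of this lemma; it simply cites \cite[Theorem 6.18]{V2}, and your argument is essentially the standard one found there: partition $X$ into small pieces around a countable dense set, push the mass of each piece to its centre, truncate and rationalize the weights to get a countable dense family of finitely supported measures, and prove completeness via tightness of $W_2$-Cauchy sequences plus Prokhorov. Two technical points deserve care if you want the argument to stand for a general Polish $X$. First, when you relocate the tail mass onto $x_1$, the resulting perturbation is small in total variation and hence harmless for the weak-$*$ convergence the lemma actually asserts; but if you also want the finitely supported measures to be $W_2$-dense (as your separability claim requires), you need the \emph{second moment} of the tail to vanish, which forces you to work in $P_2(X)$ and invoke dominated convergence --- smallness of the tail mass alone does not control the transport cost of moving far-away mass to $x_1$. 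Second, the $\epsilon$-enlargement of a compact set need not be compact in a non-compact Polish space, so the phrase ``compact enlargement $K'$ of $K$'' is not literally correct as written; the standard repair is to run the Chebyshev estimate at every scale $2^{-j}$, cover each compact $K_j$ by finitely many balls, and intersect the resulting enlargements to obtain a closed totally bounded, hence compact, set. Neither issue affects the way the lemma is used in this paper: as you note, here $X=P(M)$ with $M$ compact, so $P(X)$ is automatically compact Polish and only the partition-and-round construction is needed.
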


\begin{proof}[\bf Proof of Theorem~\ref{thm: ac for general}]

 Decompose  $$\Omega =\Omega(\mathcal{A}_L)\,  \Omega_{\mathcal{A}_L} + (1-\Omega(\mathcal{A}_L))\, \Omega_{P(M) \setminus \mathcal{A}_L},$$ where $\Omega_{\mathcal{A}_L}$ and $\Omega_{P(M) \setminus \mathcal{A}_L}$ 
denote the probability measures obtained by restricting $\Omega$ to $\mathcal{A}_L$ and $P(M)\setminus \mathcal{A}_L$, respectively, then normalizing. 

It is clear that $\mathcal{A}_L$ is closed in the weak-* topology of $P(M)$, and therefore is itself a complete separable metric space, and so applying  Lemma~\ref{lem: polish} to $X =\mathcal{A}_L$ yields a sequence $\Omega_{\mathcal{A}_L}^N$ of  finitely supported   probability measures on $\mathcal{A}_L$ converging in the weak-* topology  (on $P(\mathcal{A}_L)$) to $\Omega_{\mathcal{A}_L}$.  Similarly, applying Lemma~\ref{lem: polish} to $X =P(M)$ we can find a sequence $\Omega_{P(M) \setminus \mathcal{A}_L}^N$ of finitely supported probability measures on $P(M)$ converging  in the   weak-* topology on $P(P(M))$ to $\Omega_{P(M) \setminus \mathcal{A}_L}$ (note however, that the $\Omega_{P(M) \setminus \mathcal{A}_L}^N$  needs not be supported on the open set $\Omega_{P(M) \setminus \mathcal{A}_L}$).
 
We then have that the finitely supported measures $$\Omega^N :=\Omega(\mathcal{A}_L)\,  \Omega_{\mathcal{A}_L}^N + (1-\Omega(\mathcal{A}_L))\, \Omega_{P(M) \setminus \mathcal{A}_L}^N$$converge weakly-* to $\Omega$, and by construction, for each  $N$ we have $\Omega^N(\mathcal{A}_L)  \geq \Omega(\mathcal{A}_L) >0$.  We will denote  $\Omega^N  =\sum_{i=1}^N\lambda_i \delta_{\mu_i}$ .
 Denote by $\bar \mu^N \in P(M)$  the  Wasserstein barycenter of $\Omega^N$; $\bar \mu^N$ is unique and  absolutely continuous as a result of Theorems ~\ref{thm: ac-for finite} and ~\ref{thm: upper bound of density}, and the fact that  $\Omega^N(\mathcal{A}_L) >0$.

Let $T_i^N$ be the optimal maps from $\bar \mu^N$ to $\mu_i$. Let $\bar f^N$, $g_i^N$ be the density functions for the absolutely continuous measures
$\bar \mu^N$ and $\mu_i \in \mathcal{A}_L$, respectively.
From Theorem~\ref{thm: upper bound of density}, 
\begin{align*}
C \| \bar f^N \|_\infty &\le [\sum_{\mu_i \in \mathcal{A}_L} \lambda_i]^{-n} \sup_{\mu_i \in \mathcal{A}_L} \|g_i\|_\infty \\
&\le\frac{ L}{\Omega(\mathcal{A}_L)^n} . 
\end{align*}

Now, for any open $A \subseteq M$, we can pass to the weak-*  limit, as $\bar \mu(A) \leq \liminf_{N \rightarrow \infty}\bar \mu^N(A)$, to obtain,
\begin{align*}
 \bar \mu (A)  & \le \frac{1}{C} \liminf_{N\to \infty}  \int_A \bar f^N (x) dx \\
& \le\frac{ L}{C\Omega(\mathcal{A}_L)^n} \vol (A).
\end{align*}
For a non open set $A \subseteq M$, we get the same inequality using an approximation. This inequality is equivalent to the desired bound on $||\bar f||_\infty$, and so the proof is complete. 
\end{proof}
Once the absolute continuity of the barycenter is established, one can use Theorem~\ref{thm: detcontrol} to obtain refined estimates on its density in terms of the generalized volume distortion coefficents.

\begin{corollary}[\bf Density estimates with volume distortion]\label{co: density volume distortion}
Assume the conditions in Theorem \ref{thm: ac for general}, and that  $\Omega$-a.e $\mu$ is absolutely continuous. Denote the density of the barycentre $\bar \mu$ by $\bar f$ , the density of the measure $\mu$ by $f_\mu$, and the optimal map between $\bar \mu$ and $\mu$ by $T_\mu$.  Then, for almost all $x$, we have
$$
\bar f(x) \leq  [\int_{P(M)}\frac{\alpha_{\lambda_x}^{1/n}(T_\mu (x))}{(f_\mu(T_{\mu}(x)))^{1/n}}d\Omega(\mu)]^{-n}
$$
\end{corollary}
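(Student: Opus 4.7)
The plan is to combine the Jacobian determinant inequality from Theorem \ref{thm: detcontrol} with the pointwise change of variables formula \eqref{eq: Jacobian} applied to each optimal map $T_\mu$, and then simply solve for $\bar f(x)$.

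First I would invoke Theorem \ref{thm: ac for general} to conclude that the Wasserstein barycenter $\bar\mu$ is absolutely continuous with density $\bar f$. Since $\Omega$-almost every $\mu$ is assumed absolutely continuous with density $f_\mu$, the Brenier-McCann theorem (Theorem \ref{thm: McCann}) yields a unique optimal map $T_\mu$ from $\bar\mu$ to $\mu$, and the change of variables formula \eqref{eq: Jacobian} gives, for $\bar\mu$-a.e.\ $x$,
\begin{equation*}
\det DT_\mu(x) = \frac{\bar f(x)}{f_\mu(T_\mu(x))}.
\end{equation*}
A Fubini argument, entirely analogous to the one in Lemma \ref{lem: a.e. diff}, shows that this identity holds simultaneously for $\Omega$-a.e.\ $\mu$ at almost every $x$, so that it may be substituted inside the $\Omega$-integral appearing in the conclusion of Theorem \ref{thm: detcontrol}.

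Substituting the expression for $\det^{1/n} DT_\mu(x)$ into the inequality
\begin{equation*}
1 \geq \int_{P(M)} \alpha_{\lambda_x}^{1/n}(T_\mu(x)) \, {\det}^{1/n} DT_\mu(x)\, d\Omega(\mu)
\end{equation*}
from Theorem \ref{thm: detcontrol}, and pulling the factor $\bar f(x)^{1/n}$ (which is independent of $\mu$) outside the integral, yields
\begin{equation*}
1 \geq \bar f(x)^{1/n} \int_{P(M)} \frac{\alpha_{\lambda_x}^{1/n}(T_\mu(x))}{f_\mu(T_\mu(x))^{1/n}}\, d\Omega(\mu).
\end{equation*}
Raising to the $n$-th power and rearranging gives the claimed bound.

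The only nontrivial issue, which I expect to be the main obstacle (though a mild one), is verifying that the joint measurability and integrability required to pull $\bar f(x)^{1/n}$ outside the integral and to interpret the right-hand side as a genuine inequality at almost every $x$ is in order. This amounts to checking that the set where the change of variables and the differentiability of $u_\mu$ both hold has full $(\Omega\times\vol)$-measure on $P(M)\times \spt\bar\mu$, which is handled exactly as in the Fubini argument in the proof of Lemma \ref{lem: a.e. diff}. Since $\alpha_{\lambda_x}^{1/n}$ is non-negative, the possibility that the right-hand integral is $+\infty$ is not an obstruction (it simply gives the vacuous bound $\bar f(x)\leq 0$, which cannot occur and indicates a null set to be discarded); the possibility that it is zero is handled by the convention $(0)^{-n} = +\infty$, again leaving the inequality trivially valid.
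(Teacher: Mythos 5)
Your proposal is correct and follows essentially the same route as the paper's own proof: invoke the change of variables formula for each $T_\mu$ via Lemma~\ref{lem: a.e. diff}, substitute $\det^{1/n} DT_\mu(x) = \bar f(x)^{1/n}/f_\mu(T_\mu(x))^{1/n}$ into the Jacobian determinant inequality of Theorem~\ref{thm: detcontrol}, pull $\bar f(x)^{1/n}$ out of the $\Omega$-integral, and rearrange. The only difference is that you spell out the Fubini-type measurability check and the edge cases slightly more explicitly than the paper does.
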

\begin{proof}
From Lemma~\ref{lem: a.e. diff}, for a.e. $x$, $T_\mu$ is differentiable at $x$ for $\Omega$-a.e $\mu$ with the change of variable formula,  
$$
\bar f(x)=f_\mu(T_\mu(x))\det DT_\mu(x).
$$ 
Moreover, wherever the above equation holds and $\bar f(x)$ is non zero, $f_\mu(T_\mu(x))$ is clearly nonzero as well. 
Multiply the change of variables equation  by the coefficients  $\alpha_{\lambda_x}(T_\mu (x))/f_\mu(T_\mu(x))$, take the resulting expression to the $1/n$ power and integrate to obtain
\begin{equation*}
\bar f^{1/n}(x)\int_{P(M)}\frac{\alpha_{\lambda_x}^{1/n}(T_\mu (x))}{f_\mu^{1/n}(T_\mu(x))}d\Omega(\mu)=\int_{P(M)} \alpha_{\lambda_x}^{1/n}(T_\mu (x)){\det}^{1/n}DT_\mu(x) d\Omega(\mu)
\end{equation*}
The righthand side is less than $1$ by Theorem~\ref{thm: detcontrol}, completing the proof.
\end{proof}

Using this and Proposition~\ref{prop: alpha bound}, we get an analogue of \cite[Corollary 19.5]{V2}:
\begin{corollary}[\bf Density estimates under $Ric\ge 0$]\label{co: density under Ric}
Assume the conditions in Theorem \ref{thm: ac for general}, that $\Omega$-a.e $\mu$ is absolutely continuous and  that $Ric \geq 0$.  Then 
$||\bar f||_{L^{\infty}} \leq \left \| ||f_\mu||_{L^{\infty}(M)}\right\|_{L^{\infty}(\Omega)} $,  where $\| \cdot \|_{L^\infty(\Omega)}$ is the $L^\infty$-norm on $P(M)$ with respect to the measure $\Omega \in P(P(M))$. 
\end{corollary}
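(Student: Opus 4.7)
The plan is to combine the two ingredients already set up in the excerpt: the pointwise density estimate of Corollary~\ref{co: density volume distortion}, and the curvature bound on the generalized distortion coefficients given by Proposition~\ref{prop: alpha bound}. No genuinely new analytic input should be required; the argument is essentially just a substitution of inequalities.

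First, I would start from the pointwise bound
\begin{equation*}
\bar f(x) \leq \left[\int_{P(M)} \frac{\alpha_{\lambda_x}^{1/n}(T_\mu(x))}{f_\mu^{1/n}(T_\mu(x))}\, d\Omega(\mu)\right]^{-n},
\end{equation*}
valid at $\bar\mu$-a.e.\ $x$ by Corollary~\ref{co: density volume distortion}. Since $Ric \geq 0$, Proposition~\ref{prop: alpha bound} gives $\alpha_{\lambda_x}(T_\mu(x)) \geq 1$ for all admissible $x$ and $\Omega$-a.e.\ $\mu$, so the integrand in the bracket is bounded below by $f_\mu^{-1/n}(T_\mu(x))$.

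Next, set $L := \bigl\|\, \|f_\mu\|_{L^\infty(M)}\, \bigr\|_{L^\infty(\Omega)}$. By definition of $L$, for $\Omega$-a.e.\ $\mu$ one has $f_\mu(T_\mu(x)) \leq L$, hence $f_\mu^{-1/n}(T_\mu(x)) \geq L^{-1/n}$ $\Omega$-almost everywhere. Integrating against the probability measure $\Omega$ yields
\begin{equation*}
\int_{P(M)} \frac{\alpha_{\lambda_x}^{1/n}(T_\mu(x))}{f_\mu^{1/n}(T_\mu(x))}\, d\Omega(\mu) \geq L^{-1/n}.
\end{equation*}
Raising to the $-n$ power (and reversing the inequality, since $t \mapsto t^{-n}$ is decreasing on $(0,\infty)$) gives $\bar f(x) \leq L$ at $\bar\mu$-a.e.\ $x$, which is exactly the claimed $L^\infty$ bound.

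There is no real obstacle here beyond bookkeeping: measurability of the integrand and $\Omega$-a.e.\ differentiability of $T_\mu$ were already handled in Lemma~\ref{lem: a.e. diff} and the derivation of Corollary~\ref{co: density volume distortion}, and the $Ric\geq 0$ hypothesis enters only through Proposition~\ref{prop: alpha bound}. The only mild subtlety is that the inequality $f_\mu(T_\mu(x)) \leq L$ must hold for $\Omega$-a.e.\ $\mu$ simultaneously for a.e.\ $x$, which follows from Fubini applied to the product measure $\Omega \otimes \bar\mu$ together with the fact that $L$ is itself an $\Omega$-essential supremum.
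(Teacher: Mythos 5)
Your proposal is correct and matches the paper's intended argument: the paper presents this corollary as an immediate consequence of Corollary~\ref{co: density volume distortion} together with Proposition~\ref{prop: alpha bound}, which is exactly the substitution of inequalities you carry out. Your remark about the Fubini/change-of-variables step (ensuring $f_\mu(T_\mu(x)) \leq L$ for $\bar\mu$-a.e.\ $x$, which follows because the bad set has $\mu$-measure zero and $T_\mu$ pushes $\bar\mu$ to $\mu$) is the only point the paper leaves implicit, and you handle it appropriately.
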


\section{Convexity over Wasserstein barycenters}\label{S:Jensen}

The goal of this section is to establish a geometric version  of Jensen's  inequality  for measures $\Omega$ on $P(M)$ with respect to  a class of functionals on  $P(M)$ that are studied extensively in the literature. In fact, we establish two forms of Jensen's inequality on $P(M)$, which we will call {\em Wasserstein Jensen's inequalities}.   See Theorems \ref{thm: k-Jensen} and \ref{thm: distortedconvexity} below for the results and recall the historical discussion of Jensen's inequality in Section~\ref{sec: intro}.

Our proofs below use in a crucial way the absolute continuity of  the Wasserstein barycenter $\bar \mu$ of $\Omega \in P(P(M))$, as   established in Theorem~\ref{thm: ac for general}. This fact guarantees the existence and uniqueness of optimal maps $T_\mu$ from $\bar \mu$  to each $\mu \in \spt \Omega$, via the Brenier-McCann theorem (Theorem \ref{thm: McCann}), and therefore allows us to use  the first and second order balance conditions (with respect to $\Omega$) of $T_\mu$ given in Theorem~\ref{thm: bccondition} as well as the associated Jacobian determinant equation for $T_\mu$.  Moreover, many of the functionals we consider are defined only on the subset $P_{ac}(M)\subseteq P(M)$ of absolutely continuous measures.

Finally, let us note that   our Wasserstein Jensen's  inequalities on $P(M)$ with a curved underlying  space $M$ cannot be easily established by adapting the techniques from the Euclidean case, $M=\R^n$.
 In the Euclidean case, i.e. in $P(\R^n)$, the result was proven by Agueh-Carlier\cite{ac} (when the support of the measure $\Omega$ on $P(\R^n)$ is  finite), using so-called generalized geodesics, which amount to exponentiation of the linear interpolation of $c$-convex functions.  A key ingredient in their proof is the fact that, in $\R^n$ with $c=|x-y|^2/2$, the linear interpolant of two $c$-convex functions is again $c$-convex.  However, this method is restrictive, since $c$-convexity of functions is not preserved under linear interpolation on more general spaces.  In fact, as shown in \cite{fkm}, this property holds only on  so-called nonnegatively cross curved spaces, which are spaces  satisfying a certain fourth order condition on the metric;  this fourth order condition is a strenthened variant of the celebrated Ma-Trudinger-Wang condition \cite{mtw, tw1} arising in the regularity theory of optimal transport maps (see also \cite{loeper, km}).    Nonnegatively cross curved spaces 
  include, for example, Euclidean space, the round sphere  \cite{loeper2} and its small perturbations in two dimensions \cite{fr, Delanoe-Ge2}, products and quotients of these \cite{km2}, and some other symmetric spaces \cite{Delanoe-Rouviere}, but exclude many other spaces, including any manifold with negative sectional curvature anywhere \cite{loeper}, as well as \emph{some} manifolds with everywhere nonnegative sectional curvature\cite{kim}.

\subsection{Functionals on $P(M)$
}\label{ss: functionals}
 In this subsection, we recall certain classes of functionals on $P(M)$, which are widely studied in the optimal transport literature, and their basic properties. These are entropy type functionals (also known as internal energy functionals), potential energy functionals, and interaction energy functionals (see e.g., \cite{m, ags, V, V2}).   Due to the notions of displacement interpolation and displacement convexity, uncovered by McCann \cite{m} and discussed here in the introduction, optimal transport plays an important role to the study of these functionals. Our notation below is mostly borrowed from the book of Villani \cite{V2}.

 We start with entropy type functionals, which, among other uses, model internal energy of gases:
 \begin{definition}[\bf Entropy type functionals; see, e.g. \cite{ags, V, V2}.]\label{def: functional U}
 Let $U: [0, \infty) \to \R$ be a differentiable function, and $\nu$ a probability measure on $M$  with  $d\nu(x)= e^{-V(x)}d\vol(x)$. 
Define the functional $\mathcal{U}_\nu: P_{ac}(M) \to \R$  by $$\mathcal{U}_\nu[\mu] :=\int_{M}U(f^\nu_{\mu}(x))d\nu (x),$$
where $f^\nu_\mu$ is the density of $\mu$ with respect to $\nu$, i.e. 
$d\mu(x) = f^\nu_\mu (d) d\nu(x)$.  For measures $\mu$ which are not absolutely continuous with respect to $\nu$, we define $\mathcal{U}_{\nu} (\mu) =+\infty$. 
Also, for $r \in [0,\infty)$ and a semi-convex function $u:M \rightarrow \mathbb{R}$, we define
\begin{align}\label{eq: p r}
 p(r) = rU'(r)-U(r)\\\nonumber
 Lu = \Delta u - \nabla V \cdot \nabla u.
\end{align}
Note that  $Lu$ is well defined wherever $u$ is twice differentiable;   for semi-convex functions $u$,  the Laplacian  $\Delta$ in $L$ should be understood to be the Alexandrov Laplacian (defined Lebesgue a.e.), or equivalently the absolutely continuous part of the distributional Laplacian.
\end{definition}

  Before giving further conditions on these functionals,
let us first recall McCann's displacement interpolation (we consider only the special form where the first endpoint measure $\mu_0$ is absolutely continuous):
\begin{definition}[\bf Displacement interpolation \cite{m}]\label{def: dis inter}
 For any    absolutely continuous measure $\mu_0 \in P_{ac}(M)$, any other measure $\mu\in P(M)$, and $0 \leq t \leq 1$, we call the measure $\mu_t = (T_t)_\# \mu_0$, $t \in [0,1]$ {\em the displacement interpolant} between $\mu_0$ and $\mu$. Here, the map $T_t$ is the optimal transport map given by  $T_t (x) = \exp_x t\nabla u_\mu(x)$ for $\mu_0$-a.e. $x$, for a $c$-convex function $u_\mu$. 
\end{definition}
Part of the importance of this notion is that the curve $t \mapsto \mu_t$ coincides with the minimal geodesic segment in $(P(M), W_2)$ between the two measures $\mu_0$ and $\mu$ \cite{m};  see \cite{ags, V, V2} for an extensive review.

There are two main conditions we will impose on the entropy type functionals $\mathcal{U}_\nu$.

\begin{condition}[\bf Conditions on entropy type functionals]\label{as: properties of U}
Our conditions on $\mathcal{U}_\nu$ are:
 \begin{enumerate}
\item $U(0)=0$ and $r\mapsto p(r)$ is a continuous function, and $p \ge 0$. 
\item   (see \cite[equation (23.29)]{V2}) For the displacement interpolation $\mu_t$ as given in Definition~\ref{def: dis inter} for absolutely continuous measure $\mu_0$ with $d\mu_0 (x) = f^\nu_{\mu_0} (x) d\nu(x)$,  
  \begin{align}\label{eq: U derivative}
\liminf_{t\to 0+}  \frac{\mathcal{U}_\nu (\mu_t) - \mathcal{U}_\nu(\mu_0)}{t} = - \int_M p(f^\nu_{\mu_0}(x)) L u_\mu (x) d\nu(x). 
\end{align}
 (This holds for a large class of functionals $U_\nu$, see, for example, the \cite[proof of (23.29), page 667]{V2}.) 
\end{enumerate}

\end{condition}

Other important types of functionals are given below:
\begin{definition}[\bf Potential energy functionals and interaction energy functionals; see e.g., \cite{ags, V, V2}]\label{def: potential interaction}
For  $\tilde V: M \to \R$ and $\tilde W: M \times M \to \R$,  the corresponding 
 {\em  potential energy functional} and {\em interaction energy functional}   
 $\mathcal{V}, \mathcal{W}: P(M) \to \R\cup\{+\infty\}$ are defined, respectively, as  
  \begin{align*}
 \mu \mapsto \mathcal{V} (\mu)  &= \int_M \tilde V (x) d\mu(x), \\
  \mu \mapsto \mathcal{W} (\mu) & = \int_M \int_M \tilde W(x, y) d\mu(x) d\mu(y).
\end{align*}
\end{definition}

\subsection{First order balance for functionals on $P(M)$ at Wasserstein barycenters}\label{ss: first order functionals}

Our proof of the Wasserstein Jensen's inequalities will rely on the following lemma, which we believe holds independent interest. Our proof of the lemma exploits the first and second order balance conditions (with respect to $\Omega$) given in Theorem~\ref{thm: bccondition}.

\begin{lemma}[\bf First order balance for entropy type functionals at Wasserstein barycenters]\label{lem: lim inf U} Let $\mathcal{U}_\nu$ be an entropy type functional given in Definition~\ref{def: functional U}. Assume that $\mathcal{U}_\nu$ satisfies  Condition~\ref{as: properties of U}. 
Let $\bar\mu$ be the Wasserstein barycenter of $\Omega$, which is assumed to be absolutely continuous.  Use the notation in Condition~\ref{as: properties of U}, letting $\mu_0=\bar \mu$. Then,
\begin{align*}
\int_{P(M)}\liminf_{t \to 0+}  \frac{\mathcal{U}_\nu[\mu_t] - \mathcal{U}_\nu[\bar \mu]}{t}d\Omega(\mu)  \ge 0.
\end{align*}
\end{lemma}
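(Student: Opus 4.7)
The plan is to reduce the statement, term by term, to the two balance conditions of Theorem~\ref{thm: bccondition}. Since Condition~\ref{as: properties of U}(2) gives the closed-form identity
\[
\liminf_{t\to 0+}\frac{\mathcal{U}_\nu[\mu_t]-\mathcal{U}_\nu[\bar\mu]}{t}=-\int_M p(f^\nu_{\bar\mu}(x))\,Lu_\mu(x)\,d\nu(x),
\]
the task is to show
\[
-\int_{P(M)}\!\int_M p(f^\nu_{\bar\mu}(x))\,Lu_\mu(x)\,d\nu(x)\,d\Omega(\mu)\ \ge\ 0.
\]

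First I would interchange the order of integration via Fubini, so that the quantity becomes $-\int_M p(f^\nu_{\bar\mu}(x))\,\bigl(\int_{P(M)}Lu_\mu(x)\,d\Omega(\mu)\bigr)\,d\nu(x)$. To justify this I would exploit that every $c$-convex potential $u_\mu$ on the compact manifold $M$ is uniformly Lipschitz and uniformly semi-convex, so $|\nabla u_\mu|\le \diam(M)$ and $\nabla^2 u_\mu\ge -C\,I$ in the Alexandrov sense, with $C$ depending only on $M$. Consequently $Lu_\mu=\Delta u_\mu-\nabla V\cdot\nabla u_\mu\ge -nC-\diam(M)\|\nabla V\|_\infty$ uniformly in $\mu$, and since $p(f^\nu_{\bar\mu})\ge 0$ and its integral against $\nu$ is bounded by $\mathcal{U}_\nu[\bar\mu]$-type quantities, the negative parts are $(\Omega\otimes\nu)$-integrable, allowing Fubini.

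Next I would compute the inner integral. Writing $Lu_\mu=\trace(\nabla^2 u_\mu)-\nabla V\cdot\nabla u_\mu$ and using Lemma~\ref{lem: a.e. diff}/Proposition~\ref{prop: derivatives inside integral} to move the $\Omega$-integration inside, one obtains for $\bar\mu$-a.e.\ $x$ (hence for $\nu$-a.e.\ $x$ in the support of $\bar\mu$, which is all that matters since $p(f^\nu_{\bar\mu})$ vanishes elsewhere)
\[
\int_{P(M)}Lu_\mu(x)\,d\Omega(\mu)=\trace\!\Bigl(\int_{P(M)}\nabla^2 u_\mu(x)\,d\Omega(\mu)\Bigr)-\nabla V(x)\cdot\!\int_{P(M)}\nabla u_\mu(x)\,d\Omega(\mu).
\]
The first-order balance \eqref{eq: 1st order balance} kills the second term outright. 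The second-order balance \eqref{eq: 2nd order balance} says the matrix $\int_{P(M)}\nabla^2 u_\mu\,d\Omega(\mu)$ is negative semi-definite, and its trace is therefore non-positive.

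Combining these two facts, $\int_{P(M)}Lu_\mu(x)\,d\Omega(\mu)\le 0$ for $\bar\mu$-a.e.\ $x$. Multiplying by $-p(f^\nu_{\bar\mu}(x))\le 0$ (since $p\ge 0$ by Condition~\ref{as: properties of U}(1)) and integrating against $\nu$ then yields the claimed nonnegativity. The only subtle point is Fubini; everything else is a direct application of Theorem~\ref{thm: bccondition} after identifying $\trace(\nabla^2 u_\mu)$ with the Alexandrov Laplacian $\Delta u_\mu$.
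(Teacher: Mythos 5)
Your argument is essentially the paper's own: use Condition~\ref{as: properties of U}(2) to write the liminf as $-\int_M p(f^\nu_{\bar\mu})Lu_\mu\,d\nu$, justify exchanging $\int_{P(M)}d\Omega$ and $\int_M d\nu$ via uniform semi-convexity/Lipschitzness of the $u_\mu$ and nonnegativity of $p$, then apply the second-order balance to the Hessian-trace part and the first-order balance to the $\nabla V\cdot\nabla u_\mu$ part. The minor reorganization (swapping Fubini first versus splitting the Laplacian first) and the explicit remark that $p(f^\nu_{\bar\mu})$ vanishes off $\spt\bar\mu$ do not change the route; this is the same proof.
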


\begin{proof}
 Apply part 2, i.e.  \eqref{eq: U derivative}, of Condition~\ref{as: properties of U}  to get 
\begin{align*}
 & \liminf_{t \to 0+}  \frac{\mathcal{U}_\nu[\mu_t] - \mathcal{U}_\nu[\bar \mu]}{t} 
 {\red =}
   - \int_M p(f^\nu_{\bar \mu}(x)) L u_\mu (x) d\nu(x) 
 \\
 & = 
 -  \int_M p(f^\nu_{\bar \mu}(x)) \Delta u_\mu (x)d\nu(x) 
   +   \int_M p(f^\nu_{\bar \mu}(x)) \nabla V (x)\cdot \nabla u_\mu (x) d\nu(x).  
   \end{align*}
 Due to Lemma~\ref{lem: a.e. diff}  (which applies due to the absolute continuity of $\bar \mu$)  $u_\mu(x)$ is twice differentiable in $x$ for $\Omega$-a.e. $\mu$, for a fixed $x$, $\nu$-a.e. Therefore, we can  integrate the above expression over $P(M)$ with respect to the probability measure  $\Omega$. 
 Now, observe that due to semi-convexity of $u_\mu$ as well as the continuity and nonnegativity of $p$, the negative part of the integrand involving $\Delta u_\mu$ is uniformly bounded (recall that $M$ is compact). Similarly, the integrand involving $\nabla u_\mu$ is uniformly bounded. Therefore, we can use the Fubini-Tonelli theorem to exchange the order of integrals $\int_M d\nu$ and $\int_{P(M)}d\Omega$, and get 
\begin{align}\label{eq: Jensen's proof Lap}
& -  \int_M p(f^\nu_{\bar \mu}(x)) \Delta u_\mu (x)d\nu(x)     \\\nonumber 
  & = 
  -   \int_M p(f^\nu_{\bar \mu}(x))\left[ \int_{P(M)} \Delta u_\mu (x)d\Omega(\mu)\right]  d\nu(x) 
  \end{align}
  and 
   \begin{align}\label{eq: Jensen's proof grad}
   &\int_M p(f^\nu_{\bar \mu}(x)) \nabla V (x)\cdot \nabla u_\mu (x) d\nu(x) \\\nonumber
 & \quad =  \int_M p(f^\nu_{\bar \mu}(x))  \nabla V (x)  \cdot \left[\int_{P(M)}  \nabla u_\mu (x)  d\Omega (\mu)\right]d\nu(x) 
\end{align}

Finally, apply  the  first and second order balance properties \eqref{eq: 1st order balance} and \eqref{eq: 2nd order balance} to the integrals of $\Delta u_\mu$ and $\nabla u_\mu$ with respect to $\Omega$, and 
use $p (f_\mu) \ge 0$. We see that righthand side of  \eqref{eq: Jensen's proof Lap} is nonnegative while the righthand side of  \eqref{eq: Jensen's proof grad} vanishes. This completes the proof.
\end{proof}

\begin{remark}[Remark on the proof of Lemma~\ref{lem: lim inf U}]
 Note that we could prove the above lemma  using only the first order balance condition \eqref{eq: 1st order balance} (that is, without relying on the second order balance condition~\eqref{eq: 2nd order balance}),   if we assumed $p(f_{\bar \mu}^\nu)\in W^{1,1}_{loc}(M)$. In that case, we can use the result in \cite[(23.42)]{V2}:
\begin{align*}
 -\int_M p( f_{\bar\mu}^\nu (x)) L u_\mu (x) d\nu(x) \ge \int_M \nabla u_\mu(x)  \cdot \nabla p(f_{\bar \mu(x)}^\nu) d\nu (x) .
\end{align*}
However, it is not known at present whether $p(f_{\bar \mu}^\nu)\in W^{1,1}_{loc}(M)$, even under the assumption that  $\Omega$-a.e. $\mu$ is smooth.   It is almost certainly not true in general under our much weaker sufficient condition for absolute continuity of $\bar \mu$, $\Omega(\mathcal{A}_L) > 0$. 
 
\end{remark}

For potential energy and interaction energy functionals, a similar result can be proved, in an easier way, using only the first order balance condition \eqref{eq: 1st order balance}:

\begin{lemma}[\bf First order balance for potential energy and interaction energy functionals at Wasserstein barycenters]\label{lem: lim inf V W}
 For the potential energy and interaction energy functionals $\mathcal{V}$ and $\mathcal{W}$ given in Definition~\ref{def: potential interaction},   assume that the functions $\tilde V: M \to \R$, $\tilde W: M\times M \to \R$ are  Lipschitz.
 
 Let $\bar\mu$ be the  Wasserstein barycenter of $\Omega$, and assume it is absolutely continuous. Use the notation in Definition~\ref{def: dis inter}, letting $\mu_0=\bar \mu$. Then, 
\begin{align*}
 &\int_{P(M)}\liminf_{t \to 0+}  \frac{\mathcal{V}[\mu_t] - \mathcal{V}[\bar \mu]}{t} d\Omega(\mu)
 = 0 ;\\
& \int_{P(M)}  \liminf_{t \to 0+}  \frac{\mathcal{W}[\mu_t] - \mathcal{W}[\bar \mu]}{t} d\Omega(\mu)
=0 .
\end{align*}

\end{lemma}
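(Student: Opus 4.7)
The plan is to reformulate each difference quotient as an integral over $M$ (respectively $M \times M$) via the pushforward identity $\mu_t = (T_t)_{\#}\bar\mu$, pass to the limit pointwise under the integral using only the Lipschitz hypothesis, and then swap the $\Omega$-integral with the spatial integral by Fubini so that the first order balance \eqref{eq: 1st order balance} can be applied.

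For $\mathcal{V}$, I first write
\begin{align*}
\frac{\mathcal{V}[\mu_t] - \mathcal{V}[\bar\mu]}{t} = \int_M \frac{\tilde V(T_t(x)) - \tilde V(x)}{t}\, d\bar\mu(x).
\end{align*}
Because $\tilde V$ is Lipschitz and $d(T_t(x),x) = t|\nabla u_\mu(x)| \le t\,\diam(M)$, the integrand is bounded by $\mathrm{Lip}(\tilde V)\diam(M)$ uniformly in $(t,x,\mu)$. By Rademacher's theorem, $\tilde V$ is differentiable Lebesgue-a.e., hence $\bar\mu$-a.e.\ since $\bar\mu$ is absolutely continuous (by Theorem~\ref{thm: ac for general} or already by hypothesis); and by Lemma~\ref{lem: a.e. diff}, $\nabla u_\mu(x)$ exists at $\bar\mu$-a.e.\ $x$ for $\Omega$-a.e.\ $\mu$. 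Hence the integrand converges pointwise to $\langle \nabla \tilde V(x), \nabla u_\mu(x)\rangle$, and bounded convergence gives, for $\Omega$-a.e.\ $\mu$,
\begin{align*}
\liminf_{t \to 0+}\frac{\mathcal{V}[\mu_t] - \mathcal{V}[\bar\mu]}{t} = \int_M \langle \nabla \tilde V(x), \nabla u_\mu(x)\rangle\, d\bar\mu(x).
\end{align*}
Integrating over $P(M)$ against $\Omega$ and applying Fubini (again justified by the uniform bound) then yields
\begin{align*}
\int_{P(M)}\!\liminf_{t \to 0+}\frac{\mathcal{V}[\mu_t] - \mathcal{V}[\bar\mu]}{t}\, d\Omega(\mu) = \int_M \left\langle \nabla \tilde V(x),\, \int_{P(M)} \nabla u_\mu(x)\, d\Omega(\mu)\right\rangle d\bar\mu(x) = 0,
\end{align*}
the last equality being exactly \eqref{eq: 1st order balance}.

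The argument for $\mathcal{W}$ is parallel. Writing
\begin{align*}
\frac{\mathcal{W}[\mu_t]-\mathcal{W}[\bar\mu]}{t} = \int_{M\times M}\frac{\tilde W(T_t(x),T_t(y)) - \tilde W(x,y)}{t}\, d(\bar\mu\otimes\bar\mu)(x,y),
\end{align*}
the integrand is once more uniformly bounded by a multiple of $\mathrm{Lip}(\tilde W)\diam(M)$, and $\tilde W$ is differentiable $(\bar\mu\otimes\bar\mu)$-a.e.\ by Rademacher combined with absolute continuity of $\bar\mu\otimes\bar\mu$ on $M\times M$. Bounded convergence gives the limit $\int_{M\times M}[\langle \nabla_x \tilde W,\nabla u_\mu(x)\rangle + \langle \nabla_y \tilde W,\nabla u_\mu(y)\rangle]\, d(\bar\mu\otimes\bar\mu)$, and applying Fubini separately to each term to pull $\int_{P(M)} \nabla u_\mu(x)\, d\Omega$ and $\int_{P(M)} \nabla u_\mu(y)\, d\Omega$ inside makes both terms vanish by \eqref{eq: 1st order balance}. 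There is no substantive obstacle here: the entire content of the lemma is that the $\mu$-dependent first-variation direction $\nabla u_\mu$ is integrated against a test vector field ($\nabla \tilde V$ or $\nabla_x \tilde W(\cdot,y)$, etc.) that does not depend on $\mu$, so the $\Omega$-integration kills it via the first order balance. The only mild subtlety, handled above, is bringing the $t$-limit inside the $\bar\mu$-integral despite $\tilde V,\tilde W$ being merely Lipschitz rather than $C^1$.
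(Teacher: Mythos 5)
Your proof is correct and follows essentially the same route as the paper: derive the first-variation formulas for $\mathcal{V}$ and $\mathcal{W}$ along the displacement interpolation, then integrate against $\Omega$, swap with the spatial integral via Fubini, and invoke the first order balance \eqref{eq: 1st order balance}. The paper simply cites the first-variation formulas as well known, whereas you derive them from the pushforward identity, Rademacher's theorem, and bounded convergence; this extra detail is accurate and does not change the substance of the argument.
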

\begin{proof}
We recall the well known fact that (see, for example, \cite{ags, V})
\begin{align*}
 &\liminf_{t \to 0+}  \frac{\mathcal{V}[\mu_t] - \mathcal{V}[\bar \mu]}{t} 
 = \int_M  \nabla \tilde V (x) \cdot \nabla u_{\mu} (x)  d\bar \mu (x);\\
&  \liminf_{t \to 0+}  \frac{\mathcal{W}[\mu_t] - \mathcal{W}[\bar \mu]}{t} 
\\
& = \int  [\nabla_x \tilde W (x, y)\cdot \nabla u_\mu (x) + \nabla_y W (x, y)\cdot \nabla u_\mu (y)]    d\bar \mu (x) d\bar \mu(y).
\end{align*}
In both cases, after integrating against $\Omega$ and using Fubini's theorem, the righthand sides vanish due to the first order balance condition  \eqref{eq: 1st order balance}. 
\end{proof}

\subsection{ $k$-displacement convex functionals}\label{SS: k-dis convex}

In this subsection, we first recall the notion of  $k$-displacement convexity \cite{m}, and then some key examples of $k$-displacement convex functionals  in the Riemannian setting. Although potential energy and interaction energy functionals also have important applications, here we will only consider, for simplicity, examples of entropy type functionals; these have been key tools in applications of optimal transport theory to Riemannian geometry; see \cite{V2} for an extensive review. 

\begin{definition}[\bf $k$-displacement convexity \cite{m}]\label{def: k-dis convex}
A functional $\mathcal{F}: {\rm dom}(\mathcal{F})\subset P(M) \to \R \cup \{ +\infty\}$ is said to be  {\em $k$-displacement convex} for $k \in \R$, if  
for each displacement interpolation $\mu_t$ between endpoint measures $\mu_0$ and $\mu_1$ (Definition~\ref{def: dis inter}), we have 
\begin{align*}
 \mathcal{F} (\mu_t ) \le (1-t) \, \mathcal{F} (\mu_0)  + t\, \mathcal{F} (\mu_1)  - \frac{k}{2} t (1-t) W_2^2 (\mu_0, \mu_1) .
\end{align*}
\end{definition}

 Examples of entropy type functionals $\mathcal{U}_\nu$ satisfying parts 1 of Condition~\ref{as: properties of U} and the $k$-displacement convexity are found on metric measure spaces satisfying the so-called $CD(K,N)$ condition \cite{sturm06, sturm06a, lottvillani,V2}; part 2 of Condition~\ref{as: properties of U} in those examples is satisfied if the domain $M$ is a smooth manifold (as we assume in this paper).

\begin{definition}[{\bf $CD(K,N)$ condition; see. e.g \cite[Ch.  14]{V2}}]\label{cdkn}
A (complete) $n$-dimensional Riemannian manifold $M$ equipped with a reference measure $\nu = e^{-V} \vol$, where $V \in C^2 (M)$, is said to satisfy a $CD(K,N)$ condition for $N \in (n, \infty]$, $K \in \R$ if 
$Ric_{N, \nu} \ge K$. Here, 
\begin{align*}
 Ric_{N,\nu} := Ric+\nabla^2 V - \frac{1}{N-n}\nabla V \otimes \nabla V,
 \end{align*}
where,  $\nabla V \otimes \nabla V: T^2M \to \R $ is  defined as 
\begin{align*}
 (\nabla V \otimes \nabla V)_x  (v, w)=(\nabla V(x) \cdot v)(\nabla V(x) \cdot w).
\end{align*}
 
\end{definition}
We note that when  $N=n$, the $CD(K,n)$ condition is simply that the Ricci curvature $Ric \ge K$ is bounded below by $K$.
\begin{example}[{\bf $k$-displacement convex functionals; see, e.g.,  \cite[Ch. 17]{V2}}]\label{ex: k-convex}
A representative functional for displacement convexity is the following: 
\begin{align*}
U_N(r) =  \begin{cases}
   -N(r^{1-1/N}-r)   & \text{$(1< N<\infty)$ }, \\
     r\log r & \text{$(N=\infty)$},
\end{cases}
\end{align*}
with the convention that $U_\infty (0)=0$. 
For $M$ satisfying the $CD(K,N)$ condition, $\mathcal{U}_\nu$ defined as $\int_M U_N (f_\mu) d\nu$, is $k$-displacement convex  
where  the constant $k$ depends on $N, K$ and $\sup_{t\in [0,1]} \|\frac{d\mu_t}{d\nu}\|_\infty$.  In this case,  $k$ and $K$ have the same sign. (See \cite[Theorem 17.15, (17.11) and Exercise 17.23]{V2}). Moreover, $\mathcal{U}_\nu$ satisfies  Condition~\ref{as: properties of U}.
\end{example}

\subsection{Wasserstein Jensen's inequalities for $k$-convex functionals}\label{ss: Wass Jensen}
 We now present one of the main results of this section, which follows easily from the first order balance for functionals  established in  Lemmas~\ref{lem: lim inf U} and \ref{lem: lim inf V W}: 
\begin{theorem}[\bf Wasserstein Jensen's inequality for $k$-displacement convex functionals]\label{thm: k-Jensen} 
 Let $\mathcal{U}_{\nu}: P_{ac}(M) \to \R$ be the entropy type functional given in Definition~\ref{def: functional U}, 
satisfying Condition~\ref{as: properties of U}. Let $\mathcal{V}, \mathcal{W}: P(M) \to \R\cup \{+\infty\}$ be the potential energy and the interaction energy functional, respectively,  given in Definition~\ref{def: potential interaction}, with Lipschitz functions  $\tilde V: M \to \R$ and $\tilde W: M \times M \to \R$.

Let  $\Omega$ be  a probability measure on $P(M)$ and assume that  $\Omega(\mathcal{A}_L ) >0$ for some $L < \infty$.
Let $\bar \mu$ be the Wasserstein barycenter of $\Omega$.

For  $\mathcal{F} = \mathcal{U}_\nu$, $\mathcal{V}$, or $\mathcal{W}$, suppose $\mathcal{F}$ is $k$-displacement convex as in Definition~\ref{def: k-dis convex}. 
Then, we have 
\begin{align}\label{eq: k-Jensen}
 \mathcal{F}(\bar \mu) \le \int_{P(M)} \mathcal{F}[\mu] d\Omega(\mu) - \frac{k}{2} \int_{P(M)}W_2^2(\bar \mu, \mu) d\Omega(\mu).
\end{align}
\end{theorem}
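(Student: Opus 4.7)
The plan is to combine the $k$-displacement convexity inequality along geodesics from $\bar\mu$ to each $\mu$ with the first order balance lemmas (Lemmas~\ref{lem: lim inf U} and \ref{lem: lim inf V W}). The Wasserstein barycenter $\bar\mu$ is absolutely continuous with bounded density by Theorem~\ref{thm: ac for general}, so $\mathcal{F}(\bar\mu)$ is finite and the displacement interpolants $\mu_t$ from $\bar\mu$ to each $\mu\in\spt\Omega$ are well defined. This absolute continuity is also what activates Lemmas~\ref{lem: lim inf U} and \ref{lem: lim inf V W}.

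First, I would fix $\mu\in\spt\Omega$ and apply the $k$-displacement convexity inequality from Definition~\ref{def: k-dis convex} along the interpolant $\mu_t$ between $\bar\mu$ and $\mu$ to obtain, for every $t\in(0,1)$,
\begin{equation*}
\frac{\mathcal{F}(\mu_t)-\mathcal{F}(\bar\mu)}{t} \;\le\; \mathcal{F}(\mu)-\mathcal{F}(\bar\mu) \;-\; \frac{k}{2}(1-t)\, W_2^2(\bar\mu,\mu).
\end{equation*}
Taking the $\liminf$ as $t\to 0^+$ gives a pointwise-in-$\mu$ inequality
\begin{equation*}
\liminf_{t\to 0^+}\frac{\mathcal{F}(\mu_t)-\mathcal{F}(\bar\mu)}{t} \;\le\; \mathcal{F}(\mu)-\mathcal{F}(\bar\mu) \;-\;\frac{k}{2}W_2^2(\bar\mu,\mu).
\end{equation*}

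Next, I would integrate this pointwise inequality against $\Omega$. The right-hand side integrates cleanly because $W_2^2(\bar\mu,\mu)\le \diam(M)^2$ and $\mathcal{F}(\mu)$ is either bounded (in the $\mathcal{V}$, $\mathcal{W}$ cases, by Lipschitzness of $\tilde V,\tilde W$) or bounded below (in the entropy case, by absorbing the possibly infinite positive part into the statement). This yields
\begin{equation*}
\int_{P(M)}\liminf_{t\to 0^+}\frac{\mathcal{F}(\mu_t)-\mathcal{F}(\bar\mu)}{t}\,d\Omega(\mu) \;\le\; \int_{P(M)}\mathcal{F}(\mu)\,d\Omega(\mu) - \mathcal{F}(\bar\mu) -\frac{k}{2}\int_{P(M)}W_2^2(\bar\mu,\mu)\,d\Omega(\mu).
\end{equation*}
Finally, I would invoke Lemma~\ref{lem: lim inf U} when $\mathcal{F}=\mathcal{U}_\nu$ and Lemma~\ref{lem: lim inf V W} when $\mathcal{F}=\mathcal{V}$ or $\mathcal{F}=\mathcal{W}$ to conclude that the left-hand side is nonnegative. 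Rearranging gives precisely \eqref{eq: k-Jensen}.

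The main obstacle I anticipate is bookkeeping around integrability and the sense in which $\mathcal{F}(\bar\mu)$ is finite; in the entropy case one must check that the bound on $\|\bar f\|_\infty$ provided by Theorem~\ref{thm: ac for general} together with compactness of $M$ keeps $\mathcal{U}_\nu(\bar\mu)$ finite, and that $\mathcal{F}(\mu)$ is $\Omega$-integrable (otherwise the inequality \eqref{eq: k-Jensen} is vacuous on its right-hand side and nothing need be proved). All the genuine analytic work—commuting derivatives with the integral against $\Omega$, and exploiting the vanishing of $\int_{P(M)}\nabla u_\mu\,d\Omega(\mu)$ and the sign of $\int_{P(M)}\nabla^2 u_\mu\,d\Omega(\mu)$—has already been packaged into Lemmas~\ref{lem: lim inf U} and \ref{lem: lim inf V W}, so the present theorem is essentially a short corollary of those lemmas together with the definition of $k$-displacement convexity.
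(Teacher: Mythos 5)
Your proof is correct and follows essentially the same route as the paper: both derive the pointwise-in-$\mu$ inequality $\liminf_{t\to 0^+}\frac{\mathcal{F}(\mu_t)-\mathcal{F}(\bar\mu)}{t}\le \mathcal{F}(\mu)-\mathcal{F}(\bar\mu)-\frac{k}{2}W_2^2(\bar\mu,\mu)$ from $k$-displacement convexity, integrate against $\Omega$, and invoke Lemma~\ref{lem: lim inf U} (for $\mathcal{U}_\nu$) or Lemma~\ref{lem: lim inf V W} (for $\mathcal{V},\mathcal{W}$) to control the integrated liminf. The extra remarks on integrability are sensible bookkeeping but do not change the structure of the argument.
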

\begin{proof} 
  Note that $\bar \mu$ is absolutely continuous from $\Omega(\mathcal{A}_L) >0$ and Theorem~\ref{thm: ac for general}. Use the notation in Defintiion~\ref{def: dis inter}, letting $\mu_0=\bar \mu$.  Then,
from $k$-displacement convexity, which is translated to $k$-convexity of $t \mapsto \mathcal{F} (\mu_t)$,  we get 
\begin{align*}
\mathcal{F} (\mu) \ge \mathcal{F}_\nu(\bar \mu) + \liminf_{t \to 0+}  \frac{\mathcal{F}[\mu_t] - \mathcal{F}[\bar \mu]}{t}  + \frac{k}{2}W_2^2 (\bar \mu, \mu).
\end{align*}
To finish the proof,  integrate this against $\Omega$ and use Lemma~\ref{lem: lim inf U} for $\mathcal{F}=\mathcal{U}_\nu$, Lemma~\ref{lem: lim inf V W} for $\mathcal{F}= \mathcal{V}, \mathcal{W}$.
\end{proof}

This theorem can be interpreted as a Wasserstein Jensen's inequality for a variety of functionals on Riemannian manifolds, including those listed in Examples~\ref{ex: k-convex}. In particular, we note the following immediate consequence:
\begin{corollary}[\bf Wasserstein Jensen's inequality on $Ric\ge 0$]\label{cor: Jensen's K ge 0 case}
Assume $Ric \geq 0$.  Let $$\mathcal{U}[\mu] :=\int_{M}U(f_\mu(x))dvol(x),$$ where $r \mapsto r^nU(r^{-n})$ is convex nonincreasing.  Then, letting $\bar \mu$ be the barycenter of $\Omega$, and assuming $\Omega(\mathcal{A}_L )>0$, 
we have
\begin{align}
 \mathcal{U}(\bar \mu) \le \int_{P(M)} \mathcal{U}[\mu] d\Omega(\mu).
\end{align}
\end{corollary}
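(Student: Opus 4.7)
The plan is to recognize the corollary as an immediate specialization of Theorem~\ref{thm: k-Jensen} to the entropy type functional with reference measure equal to (a rescaling of) Riemannian volume and with displacement-convexity constant $k=0$. First I would identify the functional $\mathcal{U}$ of the corollary with some $\mathcal{U}_\nu$ from Definition~\ref{def: functional U}, taking $V\equiv 0$; any overall constant arising because $\vol$ need not be a probability measure can be absorbed into $U$ without affecting either the convexity hypothesis ``$r\mapsto r^nU(r^{-n})$ convex nonincreasing'' or the inequality to be proved.

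Next I would verify Condition~\ref{as: properties of U}. For part 1, a direct computation with $h(r):=r^nU(r^{-n})$ gives $h'(r)=-nr^{n-1}p(r^{-n})$ where $p(r)=rU'(r)-U(r)$, so the assumption that $h$ is nonincreasing yields $p\ge 0$; continuity of $p$ follows from the standing smoothness of $U$. The vanishing $U(0)=0$ is either a standing convention for such functionals or can be arranged by subtracting a constant, which alters $\mathcal{U}$ by a constant and so preserves the inequality. Part 2, the differentiation identity~\eqref{eq: U derivative}, is the standard first-variation formula along McCann's displacement interpolant in the case $V\equiv 0$, as documented in \cite[proof of (23.29)]{V2}.

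The key structural input is then the classical McCann--Sturm--Lott--Villani theorem, as recalled in Example~\ref{ex: k-convex} and \cite[Theorem 17.15]{V2}: under $Ric\ge 0$, i.e.\ $CD(0,n)$, the convex nonincreasing hypothesis on $r\mapsto r^nU(r^{-n})$ implies that $\mathcal{U}$ is $0$-displacement convex along Wasserstein geodesics on $P_{ac}(M)$.

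Finally, with absolute continuity of $\bar\mu$ guaranteed by Theorem~\ref{thm: ac for general} from the hypothesis $\Omega(\mathcal{A}_L)>0$, all assumptions of Theorem~\ref{thm: k-Jensen} are then in place with $\mathcal{F}=\mathcal{U}$ and $k=0$. Applying that theorem makes the correction term $-\tfrac{k}{2}\int W_2^2(\bar\mu,\mu)\,d\Omega$ vanish and leaves exactly the Jensen inequality claimed. I do not expect a genuine obstacle; the only mildly delicate verification is part 2 of Condition~\ref{as: properties of U}, but this reduces to a well-known first-variation identity along displacement interpolants and requires nothing beyond the machinery already set up in the paper.
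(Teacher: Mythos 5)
Your proposal is correct and follows exactly the paper's route: the corollary is stated in the text as an immediate consequence of Theorem~\ref{thm: k-Jensen}, applied with $k=0$ because under $Ric\ge 0$ (i.e.\ $CD(0,n)$) the McCann condition on $U$ yields $0$-displacement convexity (Example~\ref{ex: k-convex}, \cite[Theorem 17.15]{V2}). The paper leaves the normalization of $\vol$ and the verification of Condition~\ref{as: properties of U} implicit; your rescaling argument and the computation $h'(r)=-nr^{n-1}p(r^{-n})$ supply exactly the missing bookkeeping.
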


\begin{remark}
The preceding Corollary also follows from a different geometric version of Jensen's  inequality, (see Theorem ~\ref{thm: distortedconvexity} below), which is similar to the distorted displacement convexity in \cite{c-ems} and \cite[Theorem 17.37]{V2}.  The resulting conveixty statement looks less clean, as distortion coefficients show up inside the integrals,  but eliminates the additional $k$ term.

\end{remark}

}

\subsection{Distorted Wasserstein Jensen's inequality}
 In this subsection, we offer an alternate geometric version of Jensen's  inequality; in contract to Theorem~\ref{thm: k-Jensen}, this version eliminates the $k$-term, but introduces generalized distortion coefficients inside the integral.

This is similar to the distorted convexity of \cite{c-ems} and those in \cite[Theorem 17.37]{V2}.
For simplicity, we deal only with the simplest class of functionals.

\begin{theorem}[\textbf{Distorted Wasserstein Jensen's  inequality}]\label{thm: distortedconvexity} Let $$\mathcal{U}[\mu] :=\int_{M}U(f_{\mu}(x))d\vol(x),$$ where $r \mapsto r^nU(r^{-n})$ is convex nonincreasing and $f_\mu(x)$ is the density of the measure $\mu$, with respect to volume. Assume  that $\Omega$  almost every $\mu$ is absolutely continuous with respect to volume and that $\Omega(\mathcal{A}_L) >0$ for some $L$.  Then $\mathcal{U}$ is displacement convex over barycenters with distortion coefficients $\alpha$; that is,

$$
\mathcal{U}(\bar \mu) \leq \int_{P(M)} \int_{M}U \left(\frac{f_{\mu}(x)}{\alpha_{\lambda_{T_\mu^{-1}(x)}}(x)}\right)\alpha_{\lambda_{T_\mu^{-1}(x)}}(x)d\vol(x)d\Omega(\mu)
$$
where $\bar \mu$ is the barycenter of $\Omega$ and $T_\mu$ is the optimal map from $\bar \mu$ to $\mu$.
\end{theorem}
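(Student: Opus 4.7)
The plan is to derive the inequality directly from the Jacobian bound of Theorem~\ref{thm: detcontrol}, together with a Jensen-type argument applied to the auxiliary function $h(r):=r^n U(r^{-n})$, which by hypothesis is convex and nonincreasing. The identity $U(s)=s\,h(s^{-1/n})$ (coming from the substitution $s=r^{-n}$) will then convert everything back into integrals involving $U$, matching the stated right-hand side.

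First I would observe that $\bar\mu = \bar f\,d\vol$ is absolutely continuous by Theorem~\ref{thm: ac for general} (since $\Omega(\mathcal{A}_L)>0$), so for $\Omega$-a.e.\ $\mu$ the Brenier--McCann theorem gives an optimal map $T_\mu$ from $\bar\mu$ to $\mu$ satisfying $\det DT_\mu(x) = \bar f(x)/f_\mu(T_\mu(x))$ for $\bar\mu$-a.e.\ $x$. Substituting this into the bound of Theorem~\ref{thm: detcontrol} and rearranging produces the pointwise inequality
\begin{equation*}
\bar f(x)^{-1/n} \;\geq\; \int_{P(M)} \alpha_{\lambda_x}^{1/n}(T_\mu(x))\, f_\mu^{-1/n}(T_\mu(x))\,d\Omega(\mu).
\end{equation*}
Applying $h$ to both sides and using its monotonicity, then its convexity together with Jensen's inequality for the probability measure $\Omega$, yields
\begin{equation*}
h(\bar f(x)^{-1/n}) \;\leq\; \int_{P(M)} h\!\bigl(\alpha_{\lambda_x}^{1/n}(T_\mu(x))\,f_\mu^{-1/n}(T_\mu(x))\bigr)\,d\Omega(\mu).
\end{equation*}

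Multiplying by $\bar f(x)$ and integrating over $M$ turns the left-hand side into $\mathcal{U}(\bar\mu)$, using $\bar f\cdot h(\bar f^{-1/n}) = U(\bar f)$. For the right-hand side I would swap integration orders by Fubini and, for each fixed $\mu$, push the inner integral forward via $T_{\mu\#}\bar\mu=\mu$, substituting $y = T_\mu(x)$; this rewrites $\alpha_{\lambda_x}(T_\mu(x))$ as $\alpha_{\lambda_{T_\mu^{-1}(y)}}(y)$ and $\bar f(x)\,d\vol(x)$ as $f_\mu(y)\,d\vol(y)$. A final application of the identity $h(s^{-1/n})\cdot s = U(s)$ with $s = f_\mu(y)/\alpha_{\lambda_{T_\mu^{-1}(y)}}(y)$ converts the integrand into $U\!\bigl(f_\mu(y)/\alpha_{\lambda_{T_\mu^{-1}(y)}}(y)\bigr)\cdot \alpha_{\lambda_{T_\mu^{-1}(y)}}(y)$, producing the claimed inequality.

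The main obstacle will be measurability and integrability bookkeeping needed to justify Fubini and to make sense of $T_\mu^{-1}$ on a full $\Omega$-measure set; one needs joint measurability of $(\mu,x)\mapsto \alpha_{\lambda_x}^{1/n}(T_\mu(x))f_\mu^{-1/n}(T_\mu(x))$, for which Lemma~\ref{lem: a.e. diff} handles the relevant a.e.\ structure, and invertibility of $T_\mu$ $\bar\mu$-a.e.\ follows since $\mu$ is absolutely continuous. Degeneracies where $\bar f=0$ or $f_\mu\circ T_\mu = 0$ should be handled by the standard convention that these quantities vanish; outside the support of $\bar\mu$ there is nothing to prove. Beyond these technicalities the argument is a clean algebraic manipulation of $U$ through $h$, with all the geometric content carried by Theorem~\ref{thm: detcontrol}, where the distortion coefficients $\alpha_{\lambda_x}$ encode the curvature.
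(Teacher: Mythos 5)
Your proposal is correct and uses exactly the same ingredients as the paper's proof: the Jacobian bound of Theorem~\ref{thm: detcontrol}, the change-of-variables identity $\det DT_\mu(x)=\bar f(x)/f_\mu(T_\mu(x))$, Jensen's inequality for the convex nonincreasing map $r\mapsto r^nU(r^{-n})$, and Fubini. The only difference is presentational — you rearrange the Jacobian bound into a pointwise estimate on $\bar f(x)^{-1/n}$ and apply $h$ before integrating, while the paper changes variables in the right-hand side first and applies Jensen to the resulting integrand — but these manipulations are equivalent, so this is the same argument.
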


\begin{remark}
Notice that, like Theorem ~\ref{thm: k-Jensen}, Theorem~\ref{thm: distortedconvexity}  implies Corollary~\ref{cor: Jensen's K ge 0 case}. 
 It is not clear to us whether the two upper bounds on $\mathcal{U}[\bar \mu]$ in Theorems~\ref{thm: k-Jensen} and \ref{thm: distortedconvexity} are comparable; even for doublely supported measures $\Omega =(1-t)\delta_{\mu_0}+t\delta_{\mu_1}$, this remains open (see \cite{V2},Open Problem 17.39).
\end{remark}
\begin{proof}[Proof of Theorem~\ref{thm: distortedconvexity}]

 In the following proof, the determinant estimates in Theorem~\ref{thm: detcontrol} play a key role. Note that this result applies as, from the assumption $\Omega(\mathcal{A}_L)>0$ and Theorem~\ref{thm: ac for general}, the Wasserstein barycenter $\bar \mu$ is absolutely continuous.

For each $\mu$, let $T_\mu(x)=\exp_x(\nabla u_\mu(x))$  be the optimal map pushing $\bar \mu$ forward to $\mu$.   For almost every  point $x$, the map  $T_\mu$ satisfies the following Jacobian equation: $$
|\det DT_\mu(x)| = \frac{f_{\bar \mu}(x)}{f_\mu(T_\mu(x))}.
$$
By a change of variables, this implies
\begin{align*}\label{changevar}
&\int_{M}U \left(\frac{f_{\mu}(x)}{\alpha_{\lambda_{T_\mu^{-1}(x)}}(x)}\right) \alpha_{\lambda_{T_\mu^{-1}(x)}}(x)d\vol(x)\\
&=\int_{M}U \left( \frac{f_{\bar \mu}(x)}{\alpha_{\lambda_x}(T_\mu (x))) |\det DT_\mu(x)|}\right){\alpha_{\lambda_{x}}(T_\mu (x))}|\det DT_\mu(x)|d\vol(x).
\end{align*}

Integrating this equation against $\Omega$, using the Fubini-Tonelli theorem as in the proof of Theorem~\ref{thm: k-Jensen}, (the ordinary) Jensen's  inequality and the convexity of $r \mapsto r^nU(r^{-n})$, we obtain
\begin{eqnarray*}
\int_{P(M)}\int_{M}U\left(\frac{f_{\mu}(x)}{\alpha_{\lambda_{T_\mu^{-1}(x)}}(x)}\right)\alpha_{\lambda_{T_\mu^{-1}(x)}}(x)d\vol(x)d\Omega(\mu)\\
\geq \int_{M}U\left( \frac{f_{\bar \mu}(x)}{r^n(x)}\right)r^n(x) d\vol(x),
\end{eqnarray*}
where 
$$
r(x)=\int_{P(M)}\alpha_{\lambda_x}(T_\mu (x))^{1/n} |\det DT_\mu(x)|^{1/n}d\Omega(\mu).
$$  
The monotonicty  of $r \mapsto r^nU(r^{-n})$ and Theorem~\ref{thm: detcontrol} then yield the desired result.
\end{proof}

\section{Curved random Brunn-Minkowski inequality}\label{S:Brunn-Minkowski}
We complete this paper with an application:  the proof of a random Brunn-Minkowski inequality on smooth metric measures spaces, Theorem~\ref{thm: random BM}. Recall the classical Brunn-Minkowski inequality states that for any two  bounded measurable sets $X, Y \subset \R^n$, 
\begin{align*}
|X+Y|^{1/n} \ge |X|^{1/n} + |Y|^{1/n} 
\end{align*}
where
$X+Y =\{ x+y \ | \ x \in X, y \in Y\}$ and $|X|$ denotes the Euclidean volume. 

Optimal transport techniques have been used to extend this inequality to Riemannian manifolds, as well as more general (not necessarily smooth) metric measure spaces, satisfying an appropriate condition on the curvature, known as  the curvature dimension condition $CD(K, N)$. This direction of research, on smooth manifolds, was initiated in \cite{c-ems} and an extensive discussion can be found in \cite{V2}; see, for instance, Theorem 30.7 therein.

On the other hand, one can easily use induction to extend the Euclidean version to the Minkowski sum of several sets $X_1,X_2,...,X_m$, obtaining 
$$
\left|\sum_{i=1}^mX_i\right|^{1/n} \geq \sum_{i=1}^m|X_i|^{1/n}.
$$  
One can  go one step further, and obtain a version for infinitely many (or random) sets on  Euclidean space.  Unlike the extension to finitely many sets, the extension to random sets is nontrivial and is known as Vitale's random Brunn-Minkowski inequality \cite{vitale}; see \cite{vitale} for a precise statement.    

 It is not obvious that these extensions hold on curved spaces; in particular, the barycenter operation is not associative on a Riemannian manifold, and so the simple induction argument implying the multi-set version on Euclidean space  does not carry over to curved spaces. In this section, we use our  Wasserstein Jensen's inequality from Theoerm \ref{thm: k-Jensen} to extend the Brunn-Minkowski inequality to 
interpolations between both \emph{several} and \emph{infinitely many} 
or {\em random} sets on smooth Riemannian manifolds. 
Our main result in this direction requires a bit of terminology; the definitions below are direct extensions from Euclidean space to Riemannian manifolds of the nomenclature of Vitale \cite{vitale}, where the barycenter operation replaces the Euclidean average.

 Let $X$ be a random measurable set on $M$; that is, a  mapping $X: (\mathcal{P}, \Omega) \rightarrow 2^M$ from a probability space $\mathcal{P}$  (equipped with the probability measure $\Omega$) to the set of  measurable  subsets of $M$.  We will assume that $vol(X) >0$ almost surely; we will then consider the random measure associated to $X$, namely $\mu_X = \frac{1_X \nu}{\nu(X)}$, where $\nu$ is a fixed reference measure.   We will assume that the  $\omega \mapsto \mu_{X(\omega)}$ is measurable with respect to weak-* convergence on $P(M)$ (note that this mapping is well defined if we assume $\nu(X) >0$ almost surely, as in Theorem \ref{thm: random BM} below).

We say a measurable mapping $S: \mathcal{P} \rightarrow M$ is a \emph{selection} of $X$ if  $S(\omega) \in X(\omega)$, for $\Omega$-a.e $\omega$, i.e. $S \in X$, almost surely.  For each selection $S$, let $BC(S) \subseteq M$ be the set of barycenters of the measure $S_\#\Omega$ on $M$; that is, the set of minimizers of $z \mapsto \int_{\mathcal{P}} d^2(z,S(\omega))d\Omega(\omega)$. We define an analogue of the Minkowski sum for a random set by
$$
 Z(X) =\{z: z\in BC(S) \text{ for some selection $S$ of $X$}\}. 
$$
 Note that $Z(X) \subset M$ is simply a subset of $M$; that is, it is {\em not} a random set.

If $M$ statisfies a $CD(K,N)$ condition (recall Definition \ref{cdkn}), we also define, for a given random set $X$, 
\begin{align*}
 \alpha(X) = \begin{cases}
   \inf_S  \inf_{z\in M}  \int_{\mathcal{P}} d^2(z,S)d\Omega(S) & \text{ if $K \ge 0$}, \\
   \sup_S  \inf_{z\in M}  \int_{\mathcal{P}} d^2(z,S)d\Omega(S)       & \text{otherwise},
\end{cases}
\end{align*}
where $\inf_S $ and $\sup_S $ denote infimum and supremum over all selections $S$ of $X$, respectively.

\begin{theorem}[\bf Curved Random Brunn-Minkowski on smooth metric measure spaces]\label{thm: random BM}
Let $M$ be a compact, 
smooth Riemannian manifold equipped with a reference probability measure $\nu$ with $d\nu(x) = e^{-V(x)} dvol(x)$, $V \in C^2(M)$.  Assume that $(M, \nu)$ satisfies a  $CD(K,N)$ curvature-dimension condition  (see Definition~\ref{cdkn}). 
 Let $X$ be a random measurable set, and assume that almost surely  $\nu(X) >0$. 
Then,
\begin{enumerate}
\item 
If $N=\infty$, 
\begin{align*}
 \log [\nu(Z(X))] \geq \mathbb{E}(\log [\nu(X)])  + \frac{k}{2}\alpha(X)
\end{align*}
where $k$ is the same constant as in Example~\ref{ex: k-convex};

 \item If $N< \infty$ and $K \ge 0$,
  \begin{equation}
[\nu(Z(X)) ]^{1/N}\geq \mathbb{E}[\nu(X)^{1/N}]
\end{equation}
where $\mathbb{E}$ denotes the expectation with respect to the probability measure $\Omega$.
\end{enumerate}
 Here  $\nu (Z(X))$ should be interpreted as the outer measure of $Z(X)$, if this set is not Borel.
\end{theorem}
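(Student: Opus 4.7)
The plan is to apply the Wasserstein Jensen inequality of Theorem~\ref{thm: k-Jensen} to the entropy type functional $\mathcal{U}_\nu$ built from $U_N$ (Example~\ref{ex: k-convex}), taking as the probability measure on $P(M)$ the pushforward $\tilde{\Omega} := (\omega \mapsto \mu_{X(\omega)})_{\#}\Omega$, where $\mu_{X(\omega)} = \nu(X(\omega))^{-1}\,\mathbf{1}_{X(\omega)}\,\nu$. A direct computation gives $\mathcal{U}_\nu[\mu_X] = -\log\nu(X)$ when $N=\infty$, and $\mathcal{U}_\nu[\mu_X] = -N(\nu(X)^{1/N}-1)$ when $N<\infty$, so the right-hand side of Theorem~\ref{thm: k-Jensen} already encodes the desired $\mathbb{E}[\log\nu(X)]$ or $\mathbb{E}[\nu(X)^{1/N}]$ respectively.

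First I would verify the hypotheses. Since $\nu(X)>0$ almost surely, there exists $L<\infty$ with $\Omega\{\omega:\nu(X(\omega))\geq 1/L\}>0$, so $\tilde{\Omega}(\mathcal{A}_L)>0$, and Theorem~\ref{thm: ac for general} yields a unique absolutely continuous Wasserstein barycenter $\bar{\mu} = \bar{f}\,\nu$ of $\tilde{\Omega}$. Next I would show $\{\bar{f}>0\}\subset Z(X)$ up to a $\nu$-null set: by Lemma~\ref{riemannianbc}, for $\bar{\mu}$-a.e.\ $z$ the point $z$ is the unique barycenter of $(\mu\mapsto T_\mu(z))_{\#}\tilde{\Omega}$; transporting this back to $\mathcal{P}$, the map $S_z:\omega\mapsto T_{\mu_{X(\omega)}}(z)$ is a measurable selection of $X$ (because $T_{\mu_X}$ sends $\bar{\mu}$ onto $\mu_X$, which is supported on $X$), and $z$ is a barycenter of $(S_z)_{\#}\Omega$, so $z\in Z(X)$.

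With the support property in hand, I would bound $\mathcal{U}_\nu[\bar{\mu}]$ from below: since $\bar{f}$ vanishes outside $Z(X)$ and $\int\bar{f}\,d\nu = 1$, Jensen's inequality for the convex function $r\mapsto r\log r$ handles the $N=\infty$ case, while H\"older's inequality with conjugate exponents $N/(N-1)$ and $N$ applied to $\int\bar{f}^{1-1/N}\,d\nu = \int_{Z(X)}\bar{f}^{1-1/N}\,d\nu \leq \nu(Z(X))^{1/N}$ handles the $N<\infty$ case, giving $\mathcal{U}_\nu[\bar{\mu}]\geq -\log\nu(Z(X))$ and $\mathcal{U}_\nu[\bar{\mu}]\geq -N(\nu(Z(X))^{1/N}-1)$ respectively (with outer measure if $Z(X)$ is not Borel). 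For the residual $W_2^2$ term in the $N=\infty$ inequality, I would use Fubini and the barycentric property of $z$:
\begin{equation*}
\mathbb{E}[W_2^2(\bar{\mu},\mu_X)] = \int_M \inf_{y\in M}\int_{\mathcal{P}} d^2(y, S_z(\omega))\,d\Omega(\omega)\,d\bar{\mu}(z),
\end{equation*}
and the inner expression is bounded below by $\alpha(X)$ when $K\geq 0$ and above by $\alpha(X)$ when $K<0$ straight from the definition of $\alpha$ as an infimum/supremum over selections; since $k$ and $K$ share a sign, in either case $\tfrac{k}{2}\mathbb{E}[W_2^2(\bar{\mu},\mu_X)]\geq \tfrac{k}{2}\alpha(X)$. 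Feeding these estimates into Theorem~\ref{thm: k-Jensen} and rearranging yields both claims.

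The main obstacle I anticipate is controlling the $k$-displacement convexity constant cleanly. Example~\ref{ex: k-convex} promises $k$-convexity of $\mathcal{U}_\nu$ with $k$ depending on the $L^\infty$-bound of the density along displacement interpolants, which is not uniform across $\tilde{\Omega}$-a.e.\ $\mu$. For $N=\infty$ the Boltzmann entropy is unconditionally $K$-displacement convex on $CD(K,\infty)$ spaces, so this issue evaporates; for $N<\infty$ the restriction $K\geq 0$ lets us take $k=0$ and invoke the plain $0$-convex form of Theorem~\ref{thm: k-Jensen}, which requires no density bounds. The remaining technical points — measurability of $S_z$, the Fubini exchange, and the passage to outer measure when $Z(X)$ fails to be Borel — are routine under the standing measurability hypothesis on $\omega\mapsto\mu_{X(\omega)}$.
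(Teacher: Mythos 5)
Your proposal is correct and follows essentially the same route as the paper's own proof: push forward $\Omega$ by $\omega\mapsto\mu_{X(\omega)}$, verify $\tilde\Omega(\mathcal{A}_L)>0$ (so Theorem~\ref{thm: ac for general} gives an absolutely continuous barycenter $\bar\mu$), apply Theorem~\ref{thm: k-Jensen} with $U_N$, identify $\mathcal{U}_\nu[\mu_X]$ explicitly, show $\bar\mu$ lives inside $Z(X)$ via Lemma~\ref{riemannianbc} and the selections $S_z(\omega)=T_{\mu_{X(\omega)}}(z)$, lower-bound $\mathcal{U}_\nu[\bar\mu]$ by Jensen/H\"older, and finally control the residual $W_2^2$ term by the $\inf_S\inf_y\leq\cdot\leq\sup_S\inf_y$ sandwich together with the sign of $k$. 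Two small points where you are actually slightly more careful than the paper's write-up: you phrase the inclusion as $\{\bar f>0\}\subset Z(X)$ modulo a $\nu$-null set and apply Jensen/H\"older over that set, which cleanly avoids the distinction between $\spt\bar\mu$ and $\{\bar f>0\}$; and you make explicit that the $N=\infty$ Boltzmann entropy is unconditionally $K$-displacement convex on $CD(K,\infty)$, while for $N<\infty$ the hypothesis $K\geq 0$ lets one take $k=0$, thereby sidestepping the dependence of the constant in Example~\ref{ex: k-convex} on density bounds, which the paper's statement leaves implicit. One minor gloss: the claim $\tilde\Omega(\mathcal{A}_L)>0$ uses that $\mathcal{A}_L$ is defined via density with respect to $\vol$, not $\nu$, so the bound on $\|\mathbf{1}_X/\nu(X)\|_\infty$ picks up a harmless factor of $\|e^{-V}\|_\infty$; this is finite since $M$ is compact and $V\in C^2$.
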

\begin{remark}
 This result easily extends to a complete, non-compact  manifold $M$, provided that the random set $X \subset B$ is contained in a fixed, bounded set $B$ almost surely, but our proof does not cover the case where $X$ satisfies the weaker hypothesis of almost sure compactness.
\end{remark}
\begin{proof}
 We present only the  proof of  the assertion 1; the proof of the second assertion is very similar and is omitted.

The proof is a direct application of the $k$-displacement convexity of the functional $\mathcal{U}_\nu$  from Example~\ref{ex: k-convex}
with  $U_\infty(r) = r\log r$ (for assertion 2, we use instead, $U_N(\rho) = -N(r^{1-1/n}-r)$), together with Theorem \ref{thm: k-Jensen}.
We associate with the random set $X$  the (random) measure $\mu_X = \frac{1_X \nu}{\nu(X)}$.  Note that $\mu_X$ is well defined  and  absolutely continuous almost surely, by the assumption $\nu(X)>0$ a.s.  We will apply our  Wasserstein Jensen's  inequality (Theorem~\ref{thm: k-Jensen})  to the push forward of $\Omega$ by the map $\omega \mapsto \mu_{X(\omega)}$, which is a  probability measure on $P(M)$.  
Let  $\bar\mu$ be the Wasserstein barycenter of this measure, which exists uniquely by Theorem~\ref{thm: existence and uniqueness}.   By  Theorem~\ref{thm: ac for general}, $\bar \mu$  is absolutely continuous with respect to volume, and hence with respect to $\nu$ as well; we denote its density with respect to $\nu$ by $f(x)$: $d\bar \mu(x) = f(x)d \nu(x)$. Now, from the Wasserstein Jensen's inequality (Theorem~\ref{thm: k-Jensen}) 
we have
\begin{align}\label{entjen}
 \int_{\spt{\bar \mu}} f (x) \log f(x) d\nu(x) & \le \int_{\mathcal{P}} \int_{X(\omega)} \frac{1}{\nu (X(\omega))} \log \left(\frac{1}{\nu(X(\omega))}\right) d\nu(x) d\Omega(\omega) \\
& \quad - \frac{k}{2} \int_{\mathcal{P}}W_2^2(\bar \mu, \mu_{X(\omega)}) d\Omega(\omega)\nonumber
\end{align}

Applying the  (ordinary) Jensen's inequality for the convex function $r\to r\log r$ to the measure $\frac{1_{\spt \bar \mu} \nu}{\nu (\spt \bar \mu)}$ and noting that $\int_M f(x)d\nu(x) =1$, the left-hand side is bounded from below by 
$\log(\frac{1}{\nu (\spt \bar \mu)}) $:
\begin{align}\label{ordjen}
\log\left(\frac{1}{\nu(\spt{\bar \mu})}\right) \le   \int_{\spt{\bar \mu}} f (x) \log f(x) d\nu(x).
\end{align}
Next, we claim that $\spt(\bar \mu) \subseteq  Z(X)$; this will imply that  $\nu(\spt(\bar \mu))\le \nu(Z(X))$ and therefore 
\begin{equation}\label{bcvsE(X)}
\log\left(\frac{1}{\nu (Z(X))}\right) \le \log\left(\frac{1}{\nu (\spt \bar \mu)}\right) .
\end{equation}

Now fix any $z \in \spt(\bar \mu)$.  Let $T_{\mu_{X(\omega)}}$ be the optimal map from  $\bar \mu$ to  $\mu_{X(\omega)}$, for any $\omega \in \mathcal{P}$ and define the selection $S:\mathcal{P} \rightarrow M$ by $S(\omega) := T_{\mu_{X(\omega)}}(z)$.  Now, by Lemma \ref{riemannianbc}, 
$z$ is a barycenter of the measure $\lambda_z:=(\omega \mapsto T_{\mu_X(\omega)}(z))_\#\Omega = S_\#\Omega$, $\bar \mu$ almost surely; by definition, this means that $z \in BC(S)$, which in turn shows $z \in Z(X)$, $\bar \mu$ almost surely.  Thus,  $\spt \bar \mu \subset Z(X)$, implying \eqref{bcvsE(X)}.

 Moreover, 
\begin{align*}
   \inf_S  \inf_{z\in M}  \int_{\mathcal{P}} d^2(z,S)d\Omega(S) \le \int_{\mathcal{P}}W_2^2(\bar \mu, \mu_X) d\Omega(X)
   \le \sup_S  \inf_{z\in M}  \int_{\mathcal{P}} d^2(z,S)d\Omega(S).
\end{align*}
 The result now follows easily, by combining the preceding inequality with \eqref{entjen}, \eqref{ordjen} and \eqref{bcvsE(X)}.
\end{proof}

As an immediate consequence, we get  the following multi-set Brunn-Minkowski inequality on $M$.\begin{corollary}[\bf Multi-set Brunn-Minkowski on spaces of nonnegative Ricci curvature]\label{cor: special BM} 
 Let $M$ be a Riemannian manifold with a reference measure $\nu$, satisfying the $CD(K,N)$ condition with $K \ge 0$.  Let $A_i \subseteq M$,  $i=1,2,....m$, be bounded sets and set $Z =\{bc_{\lambda}(x_1,...,x_m): x_i \in A_i\}$ be the set of barycenters of points in the $A_i$.  Then,
 \begin{itemize}
 \item 
  if $N<\infty$, 
\begin{align*}
\nu(Z)^{1/N} \geq \sum_{i=1}^m \lambda_i \nu(A_i)^{1/N};
\end{align*}
\item 
 if $N=\infty$, 
\begin{align*}
 \log \nu(Z) \ge \sum_{i=1}^m \lambda_i \log \nu(A_i).
\end{align*}
\end{itemize}
\end{corollary}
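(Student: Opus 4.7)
The plan is to realize the multi-set setting as a special case of Theorem~\ref{thm: random BM}. Consider the finite probability space $\mathcal{P} = \{1, 2, \dots, m\}$ equipped with the measure $\Omega(\{i\}) = \lambda_i$, and define the random set $X : \mathcal{P} \to 2^M$ by $X(i) = A_i$. We may assume all $A_i$ have positive $\nu$-measure, since dropping any $A_i$ with $\nu(A_i) = 0$ only weakens the right-hand side of both inequalities; hence $\nu(X) > 0$ almost surely. Measurability of $\omega \mapsto \mu_{X(\omega)}$ is automatic because $\mathcal{P}$ is finite, and although the compactness hypothesis of Theorem~\ref{thm: random BM} may fail, $X$ is almost surely contained in the bounded set $\bigcup_i A_i$, so the remark following Theorem~\ref{thm: random BM} applies.

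Next I would verify the set-theoretic identity $Z(X) = Z$. A selection $S : \mathcal{P} \to M$ of $X$ is precisely a tuple $(x_1, \dots, x_m) \in A_1 \times \cdots \times A_m$ with $S(i) = x_i$, and its law is the discrete measure $S_\# \Omega = \sum_{i=1}^m \lambda_i \delta_{x_i}$, whose set of barycenters is exactly $bc_\lambda(x_1, \dots, x_m)$. Taking the union of $BC(S)$ over all such selections therefore recovers $Z = \{bc_\lambda(x_1, \dots, x_m) : x_i \in A_i\}$, as required.

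With this identification, both conclusions follow directly. For $N < \infty$ the second assertion of Theorem~\ref{thm: random BM} gives
\begin{align*}
\nu(Z)^{1/N} \;=\; \nu(Z(X))^{1/N} \;\geq\; \mathbb{E}\bigl[\nu(X)^{1/N}\bigr] \;=\; \sum_{i=1}^m \lambda_i\, \nu(A_i)^{1/N}.
\end{align*}
For $N = \infty$, the first assertion of Theorem~\ref{thm: random BM} together with $k \geq 0$ (since $K \geq 0$ forces $k$ and $K$ to have the same sign, by Example~\ref{ex: k-convex}) and $\alpha(X) \geq 0$ (immediate from its definition) yields
\begin{align*}
\log \nu(Z) \;\geq\; \mathbb{E}\bigl[\log \nu(X)\bigr] + \tfrac{k}{2}\alpha(X) \;\geq\; \sum_{i=1}^m \lambda_i \log \nu(A_i).
\end{align*}

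Since the corollary is essentially a specialization of the preceding theorem, no substantive obstacle is anticipated; the only minor points requiring care are the cosmetic handling of zero-measure sets and the invocation of the non-compact extension of Theorem~\ref{thm: random BM} by way of the boundedness of $\bigcup_i A_i$.
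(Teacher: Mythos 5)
Your proof is correct and takes exactly the route the paper intends: the paper gives no argument beyond the phrase ``as an immediate consequence'' of Theorem~\ref{thm: random BM}, and you have spelled out precisely the specialization that phrase refers to. The identification of selections $S$ with tuples $(x_1,\dots,x_m)\in A_1\times\cdots\times A_m$, hence $Z(X)=Z$, is the heart of the matter and you have it right, as is the observation that $k\geq 0$ and $\alpha(X)\geq 0$ when $K\geq 0$, which kills the extra term in the $N=\infty$ case.

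One small point deserves more care than you give it: the reduction to $\nu(A_i)>0$ for all $i$. You say one may ``drop'' any $A_i$ of measure zero since this only weakens the right-hand side, but dropping a set changes both the weight normalization ($\sum\lambda_i$ would no longer be $1$) and the set $Z$ itself (which is built from barycenters of $m$-tuples, not $(m-1)$-tuples), so the resulting statement is not a weakening of the original in any obvious way. The honest fix is cheaper: for $N=\infty$ the right-hand side is $-\infty$ whenever some $\nu(A_i)=0$, so the inequality is vacuous; for $N<\infty$ the term $\lambda_i\nu(A_i)^{1/N}$ simply vanishes, but one still needs the theorem to apply, so it is cleanest to just record $\nu(A_i)>0$ as an (implicit in the paper) hypothesis of the corollary rather than claim it is WLOG. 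With that caveat, your argument is complete and matches the paper.
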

Note that on the Euclidean space (with $\nu=\vol$), this corollary follows easily by iterating the classsical Brunn-Minkowski inequality $m$ times, as the barycenter operation  $bc_{\lambda}(x_1,...,x_m) = \sum_i \lambda_i x_i$ is  associative; however, this is not the case on curved spaces.

\bibliographystyle{plain}
\bibliography{biblio}

\end{document}